\newtheorem{Proposition}{Proposition}[section]
\newtheorem{Corollary}{Corollary}[section]
\newtheorem{Lemma}{Lemma}[section]
\newtheorem{Theorem}{Theorem}[section]
 \newtheorem*{theorem*}{Theorem}
\theoremstyle{definition}
\newtheorem{Definition}{Definition}[section]
\theoremstyle{definition}
\newtheorem{Remark}{Remark}[section]
\numberwithin{equation}{section}
\begin{document}

\title{Uniform distribution of subpolynomial functions along primes and applications} 
\author[V. Bergelson]{Vitaly Bergelson}
\thanks{The first author gratefully acknowledges the support of the NSF under grant DMS-1162073.}
\address[V. Bergelson]{Department of Mathematics\\ Ohio State University \\ Columbus, OH 43210, USA}
\email{vitaly@math.ohio-state.edu}

\author[G. Kolesnik]{Grigori Kolesnik}
\address[G. Kolesnik]{Department of Mathematics\\ California State University \\ Los Angeles, CA 90032, USA}
\email{gkolesnik@sbcglobal.net}

\author[Y. Son]{Younghwan Son}
\address[Y. Son]{Faculty of Mathematics and Computer Science\\ Weizmann Institute of Science \\ Rehovot, 7610001,Israel}
\email{younghwan.son@weizmann.ac.il}

\bigskip

\gdef\shorttitle{Uniform distribution of subpolynomial functions along primes}
\maketitle

\setcounter{section}{0}

\begin{abstract} 
Let $H$ be a Hardy field (a field consisting of germs of real-valued functions at infinity that is closed under differentiation) and let $f \in H$ be a subpolynomial function. 
Let $\mathcal{P} = \{2, 3, 5, 7, \dots \}$ be the (naturally ordered) set of primes. 
We show that $(f(n))_{n \in \mathbb{N}}$ is uniformly distributed $\bmod \, 1$ if and only if $(f(p))_{p \in \mathcal{P}}$ is uniformly distributed $\bmod \, 1$. 
This result is then utilized to derive various ergodic and combinatorial statements which significantly generalize the results obtained in \cite{BKMST}.
\end{abstract}


\section{Introduction}
In the recent paper \cite{BKMST} new results on sets of recurrence involving the prime numbers were established, 
thereby unifying and refining some previous results obtained in \cite{Sa1}, \cite{Sa2}, \cite{Sa3}, \cite{F}, \cite{KM} and \cite{BL}.

Here are the formulations of some of the results from \cite{BKMST} that are relevant to the discussion in this introduction.
\begin{Theorem} [Theorem 3.1 in \cite{BKMST}]
\label{thm0.1}
Let $c_1, c_2, \dots , c_k$ be distinct positive real numbers such that $c_i \notin \mathbb{N} (:= \{1, 2, 3, \dots \})$ for $i=1,2, \dots, k$. 
Let $U_1, \dots , U_k$ be commuting unitary operators on a Hilbert space $\mathcal{H}$.  
Then,
$$\lim_{N \rightarrow \infty} \frac{1}{N}  \sum_{n=1}^N U_1^{[p_n^{c_1}]} \cdots U_k^{[p_n^{c_k}]} f = f^*,$$
strongly in $\mathcal{H}$, where $p_n$ denotes the $n$-th prime and $f^*$ is the projection of $f$ on $\mathcal{H}_{inv} := \{ f \in \mathcal{H} :  U_i f = f \,\, \textrm{for all} \,\, i\}$.
\end{Theorem}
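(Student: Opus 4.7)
The plan is to reduce Theorem~\ref{thm0.1} to a Weyl-type equidistribution statement for subpolynomial Hardy functions along primes, so that the main theorem of the paper can be applied. By the joint spectral theorem for the commuting unitaries $U_1,\dots,U_k$, the desired convergence to $f^*$ is equivalent to
$$
\lim_{N\to\infty}\frac{1}{N}\sum_{n=1}^N\prod_{j=1}^k\lambda_j^{[p_n^{c_j}]}=\mathbf{1}_{\lambda=(1,\dots,1)}
$$
for every $\lambda=(\lambda_1,\dots,\lambda_k)\in\mathbb T^k$. Coordinates with $\lambda_j=1$ contribute a trivial factor, so after relabeling one may assume every $\lambda_j=e^{2\pi i\beta_j}$ with $\beta_j\in(0,1)$ and prove that the limit is $0$.

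The next step decouples the floor function by Fourier analysis on $\mathbb T$. Writing $e^{2\pi i\beta_j[x]}=e^{2\pi i\beta_j x}\,e^{-2\pi i\beta_j\{x\}}$ and expanding the second factor into its Fourier series, with coefficients $\widehat h_{\beta_j}(m)=O(1/|m|)$, one obtains, formally,
$$
\prod_j\lambda_j^{[p_n^{c_j}]}=\sum_{\vec m\in\mathbb Z^k}\Big(\prod_j\widehat h_{\beta_j}(m_j)\Big)\,e^{2\pi if_{\vec m}(p_n)},\qquad f_{\vec m}(x):=\sum_j(\beta_j+m_j)x^{c_j}.
$$
For every fixed $\vec m$, the function $f_{\vec m}$ lies in a Hardy field, is subpolynomial, and has all coefficients $\beta_j+m_j\neq 0$ (since $\beta_j\in(0,1)$) on the distinct positive non-integer exponents $c_j$. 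A classical Hardy-field equidistribution criterion (Boshernitzan) gives that $(f_{\vec m}(n))_n$ is uniformly distributed $\bmod\,1$; by the main theorem of the paper, so is $(f_{\vec m}(p))_p$, and Weyl's criterion yields $N^{-1}\sum_{n\le N}e^{2\pi if_{\vec m}(p_n)}\to 0$.

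The final step is a truncation, since $\sum_{\vec m}|\prod_j\widehat h_{\beta_j}(m_j)|=+\infty$ rules out a direct application of dominated convergence. One approximates each factor $e^{-2\pi i\beta_j\{\cdot\}}$ in $L^1(\mathbb T)$ by a trigonometric polynomial of degree $\le M$ (e.g.\ via Fej\'er or Jackson smoothing, so that the truncated expansion is absolutely summable), applies the previous step to the finite family of frequencies that appear, and controls the remainder using the trivial bound $|e^{2\pi i\beta_j[p_n^{c_j}]}|\le 1$ together with the $L^1$-approximation error; letting first $N\to\infty$ and then $M\to\infty$ yields the desired vanishing. I expect this truncation to be the principal technical obstacle: the spectral reduction and Fourier decoupling are routine, but swapping a conditionally convergent Fourier expansion with a limit along primes requires the smoothing approximations to be arranged so that the error contributes $o(1)$ uniformly in $N$.
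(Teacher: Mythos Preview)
Your plan is correct, and the spectral reduction plus the identification of $f_{\vec m}$ as a Hardy-field function satisfying Boshernitzan's condition is exactly right. The route you take to handle the floor function, however, differs from the paper's.

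The paper (in Proposition~\ref{ud lemma}, which feeds into Theorem~\ref{ergodic}, the generalization of the statement at hand) does \emph{not} Fourier-expand $e^{-2\pi i\beta_j\{x\}}$. Instead it writes $e(\beta_j[\xi_j(p_n)])=F_j(\beta_j\xi_j(p_n),\xi_j(p_n))$ with $F_j(x,y)=e(x-\beta_j\{y\})$, observes that $F_j$ is Riemann integrable on $\mathbb T^2$, and shows via Theorem~\ref{main} that the vector $(\beta_1\xi_1(p_n),\xi_1(p_n),\dots,\beta_m\xi_m(p_n),\xi_m(p_n))$ is u.d.\ in $\mathbb T^{2m}$ whenever the $\beta_j$ are irrational (rational $\beta_j$ are handled separately by passing to $\mathbb Z_q$). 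The average then converges to $\prod_j\int_{\mathbb T^2}F_j=0$ with no truncation at all. Your Fourier-expansion approach has the advantage of treating rational and irrational $\beta_j$ uniformly, since $\beta_j+m_j\neq 0$ regardless; the paper's approach has the advantage of sidestepping the conditionally convergent series.

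One point in your plan deserves tightening: the phrase ``$L^1$-approximation error'' is not by itself enough to control the remainder along the specific sequence $(\{p_n^{c_j}\})$. If $g_{M,j}$ is a trigonometric approximant with $\|h_{\beta_j}-g_{M,j}\|_{L^1(\mathbb T)}<\varepsilon$, you still need
\[
\frac{1}{N}\sum_{n\le N}\bigl|h_{\beta_j}(\{p_n^{c_j}\})-g_{M,j}(\{p_n^{c_j}\})\bigr|\longrightarrow \|h_{\beta_j}-g_{M,j}\|_{L^1(\mathbb T)},
\]
and this transfer requires the (joint) equidistribution of $(\{p_n^{c_1}\},\dots,\{p_n^{c_k}\})$ in $\mathbb T^k$ together with Riemann integrability of $|h_{\beta_j}-g_{M,j}|$. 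Both hold (the former by Theorem~\ref{main} applied to integer combinations of the $x^{c_j}$, the latter because $h_{\beta_j}$ has a single jump and $g_{M,j}$ is continuous), so your argument goes through---but you should invoke equidistribution explicitly here rather than the trivial bound alone.
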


\begin{Theorem}[Corollary 3.1 in \cite{BKMST}]
\label{thm0.2}
Let $c_1, c_2, \dots , c_k$ be positive non-integers. 
Let $T_1, T_2, \dots , T_k$ be commuting, invertible measure preserving transformations on a probability space $(X, \mathcal{B}, \mu)$. 
Then, for any $A \in \mathcal{B}$ with $\mu(A) > 0$, one has 
$$\lim_{N \rightarrow \infty} \frac{1}{N} \sum_{n=1}^{ N} \mu(A \cap T_1^{-[p_{n}^{c_1}]} \cdots T_k^{-[p_n^{c_k}]} A) \geq \mu^2(A),$$
where $p_n$ denotes the $n$-th prime.
\end{Theorem}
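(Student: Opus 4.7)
The plan is to derive Theorem \ref{thm0.2} as a direct consequence of Theorem \ref{thm0.1} via the standard Khintchine-type functional-analytic argument. First, I would pass to the Koopman representation: set $U_i : L^2(X,\mathcal{B},\mu) \to L^2(X,\mathcal{B},\mu)$ to be the unitary operator defined by $U_i g = g \circ T_i^{-1}$, so that $\mu(A \cap T_i^{-m} A) = \langle \mathbf{1}_A, U_i^{m} \mathbf{1}_A \rangle$. Since the $T_i$ commute, so do the $U_i$, and we may take $\mathcal{H} = L^2(X,\mathcal{B},\mu)$ and $f = \mathbf{1}_A$ in Theorem \ref{thm0.1}.

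If the $c_i$'s are distinct, Theorem \ref{thm0.1} applies directly. If some exponents coincide, I would first group the corresponding operators: whenever $c_i = c_j$, replace the pair $(T_i, T_j)$ by the single commuting transformation $T_i T_j$ (which is still measure-preserving), so that the reduced tuple of exponents is distinct. After this reduction, Theorem \ref{thm0.1} yields
\begin{equation*}
\frac{1}{N}\sum_{n=1}^{N} U_1^{[p_n^{c_1}]} \cdots U_k^{[p_n^{c_k}]} \mathbf{1}_A \; \longrightarrow \; f^{*} \quad \text{strongly in } L^{2},
\end{equation*}
where $f^{*}$ is the orthogonal projection of $\mathbf{1}_A$ onto the subspace of common invariants $\mathcal{H}_{\mathrm{inv}}$.

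Taking inner products with $\mathbf{1}_A$ (which is permissible since strong convergence implies weak convergence) and invoking the self-adjointness of the orthogonal projection onto $\mathcal{H}_{\mathrm{inv}}$, I obtain
\begin{equation*}
\lim_{N \to \infty} \frac{1}{N}\sum_{n=1}^{N} \mu(A \cap T_1^{-[p_n^{c_1}]} \cdots T_k^{-[p_n^{c_k}]} A) \;=\; \langle \mathbf{1}_A, f^{*}\rangle \;=\; \langle f^{*}, f^{*}\rangle \;=\; \|f^{*}\|_{2}^{2}.
\end{equation*}
Finally, since the constant function $\mathbf{1}_X$ lies in $\mathcal{H}_{\mathrm{inv}}$, the Cauchy--Schwarz inequality gives
\begin{equation*}
\|f^{*}\|_{2}^{2} \;\geq\; |\langle f^{*}, \mathbf{1}_X \rangle|^{2} \;=\; |\langle \mathbf{1}_A, \mathbf{1}_X\rangle|^{2} \;=\; \mu(A)^{2},
\end{equation*}
which is the desired lower bound.

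There is no real obstacle here: the entire content is in Theorem \ref{thm0.1}. The only small points requiring care are (i) handling the inverse in $T_i^{-[p_n^{c_i}]}$, which is resolved by choosing the Koopman operator $U_i g = g\circ T_i^{-1}$ (or equivalently applying Theorem \ref{thm0.1} to $T_i^{-1}$), and (ii) dealing with possibly coinciding exponents, which is handled by the grouping reduction above. Everything else is the textbook Khintchine recurrence inequality.
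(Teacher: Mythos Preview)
Your argument is correct. The reduction to Theorem \ref{thm0.1}, the grouping trick for repeated exponents, and the Khintchine-type inequality $\|f^{*}\|_2^{2}\ge |\langle f^{*},\mathbf{1}_X\rangle|^{2}=\mu(A)^{2}$ are all sound.

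Note that Theorem \ref{thm0.2} is quoted from \cite{BKMST} rather than proved in the present paper, so there is no proof here to compare against directly. The closest in-paper argument is the proof of the generalization Theorem \ref{A}, and that proceeds by a slightly different route: instead of invoking a mean-ergodic result like Theorem \ref{thm0.1} and projecting, it applies the Bochner--Herglotz theorem to obtain the spectral measure $\nu$ with $\mu(A\cap T_1^{-n_1}\cdots T_k^{-n_k}A)=\hat{\nu}(n_1,\dots,n_k)$, uses the equidistribution input (Proposition \ref{ud lemma}) to show that the Ces\`aro averages of $\hat{\nu}$ along the sequence converge to $\nu(\{0\})$, and then appeals to the elementary inequality $\nu(\{0\})\ge \mu(A)^{2}$. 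Your approach packages the same spectral content inside Theorem \ref{thm0.1} and extracts the lower bound via Cauchy--Schwarz on the projection; the paper's approach bypasses the intermediate ergodic-theorem statement and reads the answer off the spectral measure directly. The two are equivalent in strength and spirit; yours is a bit more modular, the paper's a bit more self-contained.
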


\noindent Recall that the upper Banach density of a set $E \subset \mathbb{Z}^k$ is defined to be
$$d^*(E) = \sup_{\{\Pi_n\}_{n \in \mathbb{N}}} \limsup_{n \rightarrow \infty} \frac{|E \cap \Pi_n|}{| \Pi_n|},$$
where the supremum is taken over all sequences of parallelepipeds
$$ \Pi_n = [a_n^{(1)}, b_n^{(1)}] \times \cdots \times [a_n^{(k)}, b_n^{(k)} ] \subset \mathbb{Z}^k, \,\, n \in \mathbb{N} \,\,
\text{with} \, \, \, b_n^{(i)} - a_n^{(i)} \rightarrow \infty \,\,(1 \leq i \leq k).$$
\begin{Theorem}[Corollary 3.2 in \cite{BKMST}]
\label{thm1.3}
Let $c_1, \dots ,c_k$ be positive non-integers.
If $E \subset \mathbb{Z}^k$ with ${d^*}(E) > 0$, then there exists a prime $p$ such that $([p^{c_1}], \dots , [p^{c_k}] ) \in E - E$. 
Moreover,
$$\liminf_{N \rightarrow \infty} \frac{ | \{ p \leq N:   ([p^{c_1}], \dots , [p^{c_k}] ) \in E - E\} | }{ \pi(N) } \geq {d^*}(E)^2.$$
\end{Theorem}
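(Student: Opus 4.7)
The plan is to deduce Theorem~\ref{thm1.3} from Theorem~\ref{thm0.2} by a routine application of the multidimensional Furstenberg correspondence principle. Given $E \subset \mathbb{Z}^k$ with $d^*(E) > 0$, the correspondence principle produces a probability space $(X, \mathcal{B}, \mu)$, commuting invertible measure-preserving transformations $T_1, \ldots, T_k$ on $X$, and a set $A \in \mathcal{B}$ with $\mu(A) = d^*(E)$, such that for every $(n_1, \ldots, n_k) \in \mathbb{Z}^k$,
\[
d^*\bigl(E \cap (E + (n_1, \ldots, n_k))\bigr) \;\geq\; \mu\bigl(A \cap T_1^{-n_1}\cdots T_k^{-n_k}A\bigr).
\]
In particular, whenever the right-hand side is strictly positive, the vector $(n_1, \ldots, n_k)$ must lie in $E - E$.

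Next I would apply Theorem~\ref{thm0.2} directly to the system $(X, \mathcal{B}, \mu, T_1, \ldots, T_k)$ and the set $A$ to obtain
\[
\liminf_{N \to \infty} \frac{1}{N}\sum_{n=1}^{N} \mu\bigl(A \cap T_1^{-[p_n^{c_1}]}\cdots T_k^{-[p_n^{c_k}]}A\bigr) \;\geq\; \mu(A)^2 \;=\; d^*(E)^2.
\]
For each $n$, bound the $n$-th summand trivially by $1$ if $([p_n^{c_1}], \ldots, [p_n^{c_k}]) \in E - E$, and by $0$ otherwise (by the correspondence inequality above). Summing these crude bounds yields
\[
\liminf_{N \to \infty} \frac{\bigl|\{n \leq N : ([p_n^{c_1}], \ldots, [p_n^{c_k}]) \in E - E\}\bigr|}{N} \;\geq\; d^*(E)^2.
\]
Since this $\liminf$ is strictly positive, the set in question is nonempty, proving the first assertion of the theorem.

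Finally, I would convert the index-based density into the claimed prime-counting form by setting $M = \pi(N)$ and observing that
\[
\frac{\bigl|\{p \leq N : ([p^{c_1}], \ldots, [p^{c_k}]) \in E - E\}\bigr|}{\pi(N)} \;=\; \frac{\bigl|\{n \leq M : ([p_n^{c_1}], \ldots, [p_n^{c_k}]) \in E - E\}\bigr|}{M},
\]
and since $M \to \infty$ as $N \to \infty$, the two $\liminf$s coincide, giving the desired inequality.

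The argument is entirely formal once Theorem~\ref{thm0.2} is in hand; there is no real obstacle. The only point requiring a bit of care is to check that the multidimensional Furstenberg correspondence applies in the required form (with equality $\mu(A) = d^*(E)$ and the stated intersection inequality for a single translate), but this is standard.
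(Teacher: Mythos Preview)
Your proposal is correct and follows precisely the approach the paper indicates: immediately after stating Theorem~\ref{thm1.3}, the paper remarks that ``to obtain combinatorial corollaries, such as Theorem~\ref{thm1.3}, from the ergodic statements one utilizes the $\mathbb{Z}^k$-version of Furstenberg's correspondence principle'' and then records that principle as Proposition~\ref{correspondence}. Your deduction of the density bound from Theorem~\ref{thm0.2} via the correspondence inequality (using that $\mu(A\cap T^{-\bold n}A)=0$ whenever $\bold n\notin E-E$) and the final conversion from the index $n$ to primes $p\le N$ are exactly the intended routine steps.
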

 
To obtain combinatorial corollaries, such as Theorem \ref{thm1.3}, from the ergodic statements 
one utilizes the $\mathbb{Z}^k$-version of Furstenberg's correspondence principle (see, for example, Proposition 7.2 in \cite{BMc}). 
\begin{Proposition}
\label{correspondence}
Given $E \subset \mathbb{Z}^k$ with $d^*(E)>0$, 
there is a probability space $(X, \mathcal{B} , \mu)$, 
commuting invertible measure preserving transformations $T_1, T_2, \dots , T_k$ of $X$ 
and $A \in \mathcal{B} $ with $d^*(E) = \mu(A)$ such that for any $m \in \mathbb{N}$ and $\bold{n}_1, \bold{n}_2, \dots , \bold{n}_m \in \mathbb{Z}^k $ one has
$$d^*(E \cap (E- \bold{n}_1) \cap (E- \bold{n}_2) \cap \cdots \cap (E - \bold{n}_m)) \geq \mu(A \cap T^{- \bold{n}_1}A \cap \cdots \cap T^{-\bold{n}_m} A),$$
where for $\bold{n} = (n_1, \dots , n_k)$, $T^{\bold{n}} = T_1^{n_1} \cdots T_k^{n_k}.$
\end{Proposition}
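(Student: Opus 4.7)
The plan is to realize $E$ as a point in a symbolic dynamical system and to obtain the desired measure $\mu$ as a weak-$*$ limit of empirical measures along a sequence of parallelepipeds witnessing $d^*(E)$. This is Furstenberg's classical correspondence argument adapted from $\mathbb{Z}$ to $\mathbb{Z}^k$.

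First I would set $X = \{0,1\}^{\mathbb{Z}^k}$ with the product topology, which is a compact metrizable space. Define commuting homeomorphisms by $(T_i \omega)(\mathbf{m}) = \omega(\mathbf{m} + \mathbf{e}_i)$, let $A = \{\omega \in X : \omega(\mathbf{0}) = 1\}$ (a clopen cylinder), and identify $E$ with its indicator $\mathbf{1}_E \in X$. Using the definition of $d^*(E)$, fix parallelepipeds $\Pi_n = \prod_{i=1}^k [a_n^{(i)}, b_n^{(i)}]$ with $b_n^{(i)} - a_n^{(i)} \to \infty$ for every $i$ and $|E \cap \Pi_n|/|\Pi_n| \to d^*(E)$. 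Form the empirical measures
\[ \mu_n = \frac{1}{|\Pi_n|} \sum_{\mathbf{x} \in \Pi_n} \delta_{T^{\mathbf{x}} \mathbf{1}_E}, \]
and by weak-$*$ compactness of the space of Borel probability measures on the compact metric space $X$, pass to a subsequence converging weak-$*$ to some Borel probability measure $\mu$.

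Next I would verify four claims: (i) $\mu$ is invariant under every $T_i$, because $T_i$-shifting $\mu_n$ changes it only on an inner boundary of $\Pi_n$ of relative size $O(1/(b_n^{(i)} - a_n^{(i)})) \to 0$; (ii) $\mu(A) = d^*(E)$, since $\mathbf{1}_A$ is continuous and $\mu_n(A) = |E \cap \Pi_n|/|\Pi_n|$; (iii) for any finite family $\mathbf{n}_1, \ldots, \mathbf{n}_m \in \mathbb{Z}^k$, a direct unpacking of the shifts gives
\[ \mu_n(A \cap T^{-\mathbf{n}_1}A \cap \cdots \cap T^{-\mathbf{n}_m}A) = \frac{|E \cap (E - \mathbf{n}_1) \cap \cdots \cap (E - \mathbf{n}_m) \cap \Pi_n|}{|\Pi_n|}, \]
and the left-hand side converges to $\mu(A \cap T^{-\mathbf{n}_1}A \cap \cdots \cap T^{-\mathbf{n}_m}A)$ because the intersection is clopen; (iv) the limit on the right is bounded above by $d^*(E \cap (E - \mathbf{n}_1) \cap \cdots \cap (E - \mathbf{n}_m))$ by the supremum definition of upper Banach density, yielding the desired inequality.

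There is no serious obstacle: everything reduces to the standard construction. The one point requiring genuine care is the shift invariance of $\mu$, which depends crucially on \emph{each} side-length $b_n^{(i)} - a_n^{(i)}$ (not merely the total volume $|\Pi_n|$) tending to infinity---this is precisely the hypothesis embedded in the definition of $d^*$ on $\mathbb{Z}^k$, and it is what allows the boundary orthogonal to the $i$-th direction to be negligible for every $i$ simultaneously.
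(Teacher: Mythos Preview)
The paper does not prove this proposition: it is stated with a citation to \cite{BMc} (``see, for example, Proposition~7.2 in \cite{BMc}'') and used as a black box. Your argument is correct and is precisely the standard Furstenberg correspondence construction that the cited reference carries out, so there is nothing to compare.
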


It follows from the result obtained in \cite{BKMST} that both Theorems \ref{thm0.1} and \ref{thm0.2} remain true 
if one replaces $[p_n^{c_i}], i=1, 2, \dots, k,$ by $[(p_n-h)^{c_i}]$ for any fixed $h \in \mathbb{Z}$. 
Now, when $h = \pm 1$, these results hold (in a slightly modified form) for $c_i \in \mathbb{N}$ as well 
and one has the following theorem which provides a simultaneous extension of various classical results.
$\mathcal{P}$ denotes the set of prime numbers $\{2, 3, 5, 7, \dots \}$.
\begin{Theorem}[Theorem 5.1 and Corollary 5.1 in \cite{BKMST}]
\label{corB}
Let $$D_h = \{ \left((p-h)^{\alpha_1}, \dots, (p-h)^{\alpha_k}, [(p-h)^{\beta_1}], \dots, [(p-h)^{\beta_l}]\right) | p \in \mathcal{P}\},$$
where $\alpha_1, \dots, \alpha_k$ are positive integers and $\beta_1, \dots, \beta_l$ are positive non-integers.
\begin{enumerate}[(i)]
\item For any measure preserving $\mathbb{Z}^{k+l}$-action $(T^{ \bold{d}})_{\bold{d} \in \mathbb{Z}^{k+l}}$ on a probability space $(X, \mathcal{B}, \mu)$,
$\{\bold{d} \in D_h: \mu (A \cap T^{- \bold{d}}A) > \mu^2(A) - \epsilon\}$ has positive lower relative density in $D_h$ for $h= \pm 1$.
\footnote{ For sets $A \subset B \subset \mathbb{Z}^m$, the relative density and the lower relative density of $A$ with respect to $B$ are defined as 
$$ \lim_{n \rightarrow \infty} \frac{|A \cap [-n,n]^m|}{|B \cap [-n,n]^m|} \quad \text{and} \quad  \liminf_{n \rightarrow \infty} \frac{|A \cap [-n,n]^m|}{|B \cap [-n,n]^m|}.$$ }
\item If $E \subset \mathbb{Z}^{k+l}$ with ${d^*}(E) > 0$, then for any $\epsilon > 0$
\begin{equation*}
 \{ \bold{d} \in D_h : d^*(E \cap E - \bold{d} ) \geq d^*(E)^2 - \epsilon \} 
 \end{equation*}
 has positive lower relative density in $D_h$ for $h=\pm 1$.
 Furthermore,
 $$\liminf_{N \rightarrow \infty} \frac{\left| \{ p \leq N :\left(  (p - 1)^{\alpha_1}, \cdots , (p - 1)^{\alpha_k}, [(p - 1)^{\beta_1}], \cdots , [(p - 1)^{\beta_l}]  \right) \in E - E \} \right| }{\pi(N)} > 0.$$
  $$\liminf_{N \rightarrow \infty} \frac{\left| \{ p \leq N :\left(  (p + 1)^{\alpha_1}, \cdots , (p + 1)^{\alpha_k}, [(p + 1)^{\beta_1}], \cdots , [(p + 1)^{\beta_l}]  \right) \in E - E \} \right| }{\pi(N)} > 0.$$
\end{enumerate} 
\end{Theorem}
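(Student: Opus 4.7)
The plan is to derive part (ii) from part (i) via Furstenberg's correspondence principle (Proposition~\ref{correspondence}): given $E \subset \mathbb{Z}^{k+l}$ with $d^*(E) > 0$, Proposition~\ref{correspondence} yields a measure preserving $\mathbb{Z}^{k+l}$-system and a set $A$ with $\mu(A) = d^*(E)$, and applying (i) with $\epsilon < d^*(E)^2$ gives the first clause of (ii). The prime-counting liminfs follow because each $\bold{d} \in D_h$ is witnessed by a unique prime. The substance of the argument is therefore part (i), which I plan to attack by spectral decomposition of the commuting unitary $\mathbb{Z}^{k+l}$-action.

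Writing $\bold{d}_p(h) := ((p-h)^{\alpha_1}, \dots, (p-h)^{\alpha_k}, [(p-h)^{\beta_1}], \dots, [(p-h)^{\beta_l}])$ and letting $\sigma$ be the spectral measure of $1_A$ on $\mathbb{T}^{k+l}$, one has $\mu(A \cap T^{-\bold{d}} A) = \int_{\mathbb{T}^{k+l}} e(\bold{d} \cdot \theta) \, d\sigma(\theta)$. A standard $L^1$-reduction shows that it suffices to prove $\liminf_N \frac{1}{\pi(N)} \sum_{p \leq N} \mu(A \cap T^{-\bold{d}_p(h)} A) \geq \mu(A)^2$. I would decompose $\sigma$ into its continuous part $\sigma_c$ (disjoint from $(\mathbb{Q}/\mathbb{Z})^{k+l}$) and its atomic part $\sigma_a$ (supported on the rationals), and treat each separately.

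For $\sigma_c$, any $\theta = (\xi_1, \dots, \xi_k, \eta_1, \dots, \eta_l)$ in its support has at least one irrational coordinate; Fourier-expanding each periodic function $e(-\eta_j \{(p-h)^{\beta_j}\})$ in the variable $(p-h)^{\beta_j}$ reduces the averaging along primes to absolutely controlled sums of expressions $\frac{1}{\pi(N)} \sum_{p \leq N} e(f_{\bold{k}}(p-h))$, where $f_{\bold{k}}(x) = \sum_i \xi_i x^{\alpha_i} + \sum_j (\eta_j + k_j) x^{\beta_j}$ is a subpolynomial Hardy field function for which $(f_{\bold{k}}(n))$ is uniformly distributed mod $1$ by standard Weyl / Boshernitzan criteria (the irrationality of some coordinate is preserved under the shifts by integers $k_j$). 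The main theorem of the present paper then forces $(f_{\bold{k}}(p))$ to be uniformly distributed along primes, so each inner average tends to $0$ and dominated convergence eliminates the $\sigma_c$-contribution.

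For $\sigma_a$, the spectral contribution reduces to analyzing the correlation of $E(1_A \mid K)$ on the Kronecker factor $K$ of the system; averaged uniformly over $K$, this correlation equals $\mu(A)^2$. It therefore remains to verify that $\bold{d}_p(h) \bmod K$ is sufficiently equidistributed in $K$ along primes. For a finite quotient of denominator $q$, this reduces via Dirichlet's theorem to an average over residues coprime to $q$. Here the choice $h = \pm 1$ is decisive because $(h, q) = 1$ for every $q$, so the residue class $p \equiv h \pmod q$ is admissible for Dirichlet and contributes a term with $\bold{d}_h(h) \equiv 0 \pmod q$, anchoring the Dirichlet-image at the origin of the quotient. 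I expect the main obstacle to lie in this atomic-spectrum step --- confirming that the induced equidistribution in $K$ indeed recovers the full correlation $\mu(A)^2$, rather than merely some positive quantity; the continuous-spectrum step, by contrast, becomes routine once the paper's main uniform distribution theorem is in hand.
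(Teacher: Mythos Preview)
Your reduction to proving $\liminf_N \frac{1}{\pi(N)} \sum_{p\le N}\mu(A\cap T^{-\bold d_p(h)}A)\ge\mu(A)^2$ is not valid: the full prime average can fall strictly below $\mu(A)^2$. Take $k=1$, $l=0$, $\alpha_1=1$, $h=1$, the unit rotation on $\mathbb{Z}/5\mathbb{Z}$ and $A=\{0,1,2\}$; then $\mu(A\cap T^{-n}A)$ for $n\equiv 0,1,2,3\pmod 5$ equals $\tfrac35,\tfrac25,\tfrac15,\tfrac15$, and since each residue class coprime to $5$ carries density $\tfrac14$ of the primes, the prime average is $\tfrac{7}{20}<\tfrac{9}{25}=\mu(A)^2$. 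So your suspicion that the rational-spectrum step yields only ``some positive quantity'' is exactly what happens, and no Dirichlet-type equidistribution in the Kronecker factor will push the \emph{unrestricted} average up to $\mu(A)^2$.

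The paper (which proves the generalization, Theorem~\ref{recurrence}; the statement you were given is quoted from \cite{BKMST}) sidesteps this by restricting rather than averaging. It splits $L^2=\mathcal H_{rat}\oplus\mathcal H_{tot}$, approximates the $\mathcal H_{rat}$-component of $1_A$ within $\epsilon$ by a vector $f_{\bold a}$ fixed by some $T^{\bold a}$, and then passes to the subset $D_h^{(r)}=\{\bold d\in D_h:r\mid d_i\text{ for all }i\}$ with $r$ a common multiple of the $a_i$. On $D_h^{(r)}$ the rational contribution is $\ge\mu(A)^2-\epsilon$ for \emph{every term}, while the $\mathcal H_{tot}$-part averages to zero by the uniform distribution argument you outline. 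The role of $h=\pm1$ is precisely to guarantee that $D_h^{(r)}$ has positive relative density in $D_h$: the class $p\equiv h\pmod r$ is admissible for Dirichlet, along it the polynomial coordinates $(p-h)^{\alpha_i}$ vanish mod $r$, and the $[(p-h)^{\beta_j}]$ coordinates equidistribute mod $r$ (apply the main theorem to $r^{-1}(p-h)^{\beta_j}$ along this arithmetic progression of primes). A minor terminological point: your split should be rational spectrum versus its complement, not continuous versus atomic---irrational eigenvalues produce atoms that belong on the $\mathcal H_{tot}$ side.
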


Theorems \ref{thm0.1} and \ref{corB} are derived in \cite{BKMST}  with the help of the following equidistribution result. 
\begin{Theorem}[Theorem 2.1 in \cite{BKMST}]
\label{thm0.3}
Let $\xi(x) = \sum_{j=1}^{m} \alpha_j x^{\theta_j}$, 
where $0 <\theta_1 < \theta_2 < \cdots < \theta_m$, $\alpha_j$ are non-zero reals, 
and assume that if all $\theta_j \in \mathbb{N}$, then at least one $\alpha_j$ is irrational. 
Then the sequence $ ( \xi(p) )_{p \in \mathcal{P}} $ is u.d. mod $1$.\footnote{We are tacitly assuming that the set $\mathcal{P} = (p_n)_{n \in \mathbb{N}}$ is naturally ordered, so that $(f(p))_{p \in \mathcal{P}}$ is just another way of writing $(f(p_n))_{n \in \mathbb{N}}$.} 
\end{Theorem}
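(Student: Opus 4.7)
The plan is to verify \emph{Weyl's criterion}: it suffices to show that for every nonzero integer $k$,
\begin{equation*}
S_k(N) := \sum_{p \leq N} \erm^{2 \pi i k \xi(p)} = o(\pi(N)) \quad \text{as } N \to \infty.
\end{equation*}
Since replacing $\xi$ by $k\xi$ preserves both its form and the standing hypothesis (if all $\theta_j$ are integers, then at least one $\alpha_j$ is irrational), I fix $k=1$ without loss of generality. If every $\theta_j$ is a positive integer, then $\xi$ is a polynomial with at least one irrational non-constant coefficient, and the classical Vinogradov estimate for exponential sums of polynomials over primes yields $S_1(N) = o(\pi(N))$ immediately.

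The real content is the case in which some $\theta_j$ is a non-integer. Here I would transfer the sum from primes to the von Mangoldt function $\Lambda$ via
\begin{equation*}
\sum_{n \leq N} \Lambda(n) \, \erm^{2 \pi i \xi(n)} = \sum_{p \leq N} (\log p) \, \erm^{2 \pi i \xi(p)} + O(\sqrt{N}\log N),
\end{equation*}
and decompose the left-hand side via Vaughan's identity into \emph{Type I} sums of the shape $\sum_{m \leq M} a_m \sum_{n \leq N/m} \erm^{2 \pi i \xi(mn)}$ and \emph{Type II} (bilinear) sums $\sum_{m \asymp M} \sum_{n \asymp N/m} b_m c_n \, \erm^{2 \pi i \xi(mn)}$. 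Since $\xi(mn) = \sum_j \alpha_j m^{\theta_j} n^{\theta_j}$ contains a non-integer exponent $\theta_{j_0}$, classical one-variable exponential sum estimates (iterated van der Corput / the theory of exponent pairs) provide a saving of the form $(N/m)^{-\eta}$ in the inner sum of the Type I piece for some $\eta > 0$ depending on $\theta_{j_0}$, while the Type II sum is handled by Cauchy--Schwarz followed by Weyl--van der Corput differencing, where the difference $\xi(mn) - \xi(m'n)$ still contains a non-integer-exponent term. Partial summation then removes the $\log p$ weight and yields the required estimate $S_1(N) = o(\pi(N))$.

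The main obstacle will be producing uniform exponential sum bounds for sums $\sum_{n \in I} \erm^{2 \pi i \psi(n)}$ where $\psi(n) = \sum_j \beta_j n^{\theta_j}$ mixes several growth rates: one has to identify a dominant non-integer term that contributes genuine oscillation and check that lower-order terms (especially ones whose exponents are integers or lie very close to integers) do not swallow the saving produced by van der Corput. A secondary delicate point is the calibration of the Vaughan parameters $U, V$ (subject to $UV \leq N$) so that both the Type I and Type II savings exceed the unavoidable divisor-function losses in the coefficients $a_m, b_m, c_n$; once both pieces are controlled by $o(N/\log N)$, the theorem follows.
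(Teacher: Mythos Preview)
Your outline is correct and mirrors the paper's approach: the polynomial case is handled by Rhin/Vinogradov, while the non-integer-exponent case is treated by passing to $\Lambda$-weighted sums, applying Vaughan's identity, and bounding the resulting Type~I and Type~II pieces via iterated van der Corput (the paper's Lemma~\ref{lem2.3.1}) together with Cauchy--Schwarz and Weyl--van der Corput differencing. The paper in fact cites this particular statement from \cite{BKMST} and instead proves the more general Hardy-field version (Theorems~\ref{thm1}--\ref{thm4}), but the engine of that proof is precisely the Vaughan + van der Corput scheme you describe, with the case analysis and parameter calibration you flag as the delicate points carried out in detail there.
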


Note that in the case when all $\theta_j \in \mathbb{N}$, Theorem \ref{thm0.3} reduces to the classical result of Rhin \cite{Rh} which states that if $f(x)$ is a polynomial with at least one coefficient other than the constant term irrational, then $(f(p))_{p \in \mathcal{P}}$ is uniformly distributed $\bmod \, 1$. Incidentally, in Theorem \ref{thm4} below we provide a new short proof of (a slight extension of) Rhin's theorem. While Theorem \ref{thm0.3} forms a rather natural extension of Rhin's result, one would like to know whether the phenomenon of uniform distribution along primes holds for more general regularly behaving and, say, eventually monotone functions. Besides being of independent interest, any such extension of Theorem \ref{thm0.3} allows one to obtain new applications to ergodic theory and combinatorics. In this context it is natural to consider functions belonging to Hardy fields.

Let $B$ denote the set of germs at $+ \infty$ of continuous real functions on $\mathbb{R}$.
Note that $B$ forms a ring with respect to pointwise addition and multiplication.
\begin{Definition} 
A {\em Hardy field} is any subfield of $B$ which is closed under differentiation. By ${\bf U} \subset B$ we denote the union of all Hardy fields.
\end{Definition}

A classical example of a Hardy field is provided by field $L$ of logarithmico-exponential functions introduced in \cite{Har1, Har2}, that is, the collection of all functions that can be constructed using the real 
constants, the functions $e^x$ and $\log x$ and the operations of addition, multiplication, division and composition of functions. 

For any $f \in {\bf U}$, $\lim\limits_{x \rightarrow \infty} f(x)$ exists as an element of $\mathbb{R} \cup \{- \infty, \infty\}$. This implies that periodic functions such as $\sin x$ and $\cos x$ do not belong to ${\bf U}$. Also if $f_1$ and $f_2$ belong to the same Hardy field, then the limit $\lim\limits_{x \rightarrow \infty} \frac{f_1(x)}{f_2(x)}$ exists (it may be infinite). See \cite{Bos} and some references therein for more information about Hardy fields.

A function $f \in \bold{U}$ is said to be subpolynomial if, for some $n \in \mathbb{N}$, $|f(x)| < x^n$ for all large enough $x$. It was proved in \cite{Bos} that if $f \in {\bf U}$ is a subpolynomial function, then $(f(n))_{n \in \mathbb{N}}$ is uniformly distributed $\bmod \, 1$ if and only if for any $P(x) \in \mathbb{Q}[x]$ one has $\lim\limits_{x \rightarrow \infty} \frac{f(x) - P(x)}{\log x} = \pm \infty$. One of the main results of this paper states that an equidistribution result similar to Theorem \ref{thm0.3} holds for any subpolynomial function satisfying Boshernitzan's condition.

\begin{Theorem}[Theorem \ref{main} in Section 3]
\label{main-intro}
For a subpolynomial function $f(x) \in {\bf U}$, the following conditions are equivalent:
\begin{enumerate}
\item $(f(n))_{n \in \mathbb{N}}$ is u.d. $\bmod \, 1$.
\item $(f(p))_{p \in \mathcal{P}}$ is u.d. $\bmod \, 1$.
\item For any $P \in \mathbb{Q}[x]$,
\begin{equation*} 
\lim_{x \rightarrow \infty} \frac{f(x) - P(x)}{\log x} = \pm \infty.
\end{equation*}
\end{enumerate}
\end{Theorem}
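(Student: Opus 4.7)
The equivalence $(1) \Leftrightarrow (3)$ is precisely Boshernitzan's theorem recalled in the paragraph preceding the statement (see \cite{Bos}), so the novel content of Theorem~\ref{main-intro} is to bring $(2)$ into this chain. The plan is to prove $(3) \Rightarrow (2)$ and $(2) \Rightarrow (3)$, using $(3)$ as the bridge.

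For $(3) \Rightarrow (2)$, I would verify Weyl's criterion: for every $k \in \mathbb{Z} \setminus \{0\}$,
\begin{equation*}
\frac{1}{\pi(N)} \sum_{p \leq N} \erm^{2\pi i k f(p)} = o(1).
\end{equation*}
The plan is to imitate the Vinogradov strategy used for Theorem~\ref{thm0.3} in \cite{BKMST}, where the same statement is proved for $f$ a real-power polynomial $\sum_j \alpha_j x^{\theta_j}$. One applies Vaughan's identity to the prime indicator to reduce the Weyl sum to Type I sums $\sum_{m \leq M}\sum_{n \leq N/m} \erm^{2\pi i k f(mn)}$ and Type II sums $\sum_{m \sim M}\sum_{n \sim N/M} a_m b_n \erm^{2\pi i k f(mn)}$. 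Type I sums are handled by the Kuzmin-Landau / van der Corput inequalities applied to the phase $y \mapsto k f(my)$; Type II sums by Cauchy-Schwarz followed by van der Corput on the difference phase $k f(m(n+h)) - k f(mn)$. Both steps require quantitative control of the iterated derivatives of $f$, and it is here that the Hardy field hypothesis enters: elements of ${\bf U}$ have eventually monotone derivatives admitting Taylor-like asymptotics, so that $f^{(j)}(x)$ can be pinned down to a known polynomial order in $x$ uniformly in $j$. Condition $(3)$ then serves to exclude the "rational resonances" where the iterated phase would degenerate to a rational polynomial and the van der Corput / Vinogradov machinery would fail to produce cancellation.

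For $(2) \Rightarrow (3)$ I argue by contraposition. If $(3)$ fails, pick $P \in \mathbb{Q}[x]$ for which $(f(x)-P(x))/\log x$ does not diverge to $\pm\infty$; since $f$ lives in a Hardy field that we may take to contain $P$ and $\log x$, this quotient has a finite limit $c$, giving $f(x) = P(x) + c\log x + \epsilon(x)$ with $\epsilon(x)/\log x \to 0$. Grouping primes $p \leq N$ by their residue class modulo the least common denominator $q$ of the coefficients of $P$ makes $\erm^{2\pi i k P(p)}$ constant on each class, reducing the analysis of the Weyl sum to that of $\sum_{p \leq N,\, p \equiv a \,(\mathrm{mod}\, q)} p^{2\pi i k c} \erm^{2\pi i k \epsilon(p)}$. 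By PNT in arithmetic progressions and Abel summation, using that $\epsilon$ varies by $o(1)$ across $[N/2, N]$ and that $p^{2\pi i k c}$ oscillates only on a logarithmic scale, this sum is of order $\pi(N)/\phi(q)$, contradicting $(2)$. The main technical obstacle is the direction $(3) \Rightarrow (2)$: extending the Vinogradov-Vaughan machinery from real-power polynomials to arbitrary subpolynomial Hardy-field functions requires invoking Hardy field structure theory to obtain uniform-in-$m$ derivative bounds for $f(my)$ and its shifts, and verifying that condition $(3)$ supplies precisely the rational-polynomial-avoidance needed by the van der Corput and Vinogradov-type inequalities.
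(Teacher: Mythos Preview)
Your strategy for $(3)\Rightarrow(2)$ is the paper's strategy: Vaughan's identity, Type I/II sums, van der Corput estimates, with the Hardy field providing derivative control. However, you significantly underestimate the case analysis required. The paper first writes $f=f_1+f_2$ with $f_2\in\mathbb{R}[x]$ and $f_1$ of type $x^{l+}$, and then proves four separate theorems (Theorems~3.2--3.5) according to whether $l\geq 1$ or $l=0$, whether $f_2$ vanishes, and whether $f_1$ dominates $\log x$. The Vaughan/van der Corput machinery you describe only covers the cases where $f_1$ has sufficient growth; when $|f_1'(x)|\ll (\log x)^c/x$ and $f_2$ has an irrational coefficient, the paper instead invokes Vinogradov's estimate for $\sum_{p\le X}e(qf_2(p))$ (Lemma~2.9) together with partial summation, and when all coefficients of $f_2$ are rational it uses Siegel--Walfisz directly. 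Your sketch that ``condition~(3) excludes the rational resonances so van der Corput works'' does not account for the genuine polynomial regime, where the cancellation has to come from the polynomial part via an entirely different mechanism.

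For $(2)\Rightarrow(3)$ your route differs from the paper's. You try to exhibit a specific nonvanishing Weyl sum directly; the paper instead sets $g=qf-qP$, observes $(g(p))$ is u.d.\ $\bmod\,1$, then uses the prime number theorem and partial summation to transfer this to $(g(n))_{n\in\mathbb{N}}$ being u.d., contradicting Boshernitzan (since $g(x)/\log x$ is bounded). The paper's argument is cleaner because it recycles the $S_0$ computation from Theorem~3.2 and avoids analyzing the oscillatory factor $p^{2\pi i kc}$. Your claim that the sum is ``of order $\pi(N)/\phi(q)$'' is imprecise: when $c\neq 0$ the normalized sum does not converge but oscillates with $\limsup\approx 1/|1+2\pi i kc|$, and when $c=0$ but $\epsilon$ is unbounded (e.g.\ $\epsilon(x)=\sqrt{\log x}$) the ``$\epsilon$ varies by $o(1)$ on $[N/2,N]$'' observation only controls a single dyadic block, not the full sum. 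Your approach can be salvaged, but the paper's transfer-to-integers argument is both shorter and more robust.
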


We will give now a sample of various applications of Theorem \ref{main-intro} obtained in this paper. One of these applications is an extension of the above Theorem \ref{thm0.1}. For a given Hardy field $H$, let ${\bf H}$ be the set of all subpolynomial functions $\xi \in H$ such that 
$$ \text{either} \,\,\, \lim\limits_{x \rightarrow \infty} \frac{\xi(x)}{x^{l+1}} = \lim\limits_{x \rightarrow \infty} \frac{x^l}{\xi(x)} = 0 \,\, \text{ for some} \,\, l \in \mathbb{N}, 
\,\,\, \text{or} \,\,\, \lim\limits_{x \rightarrow \infty} \frac{\xi(x)}{x} = \lim\limits_{x \rightarrow \infty} \frac{\log x}{\xi(x)} =0.$$

\begin{Theorem}[Theorem \ref{ergodic} in Section 4]
Let $U_1, \dots, U_k$ be commuting unitary operators on a Hilbert space $\mathcal{H}$. 
Let $\xi_1, \dots , \xi_k \in {\bf H}$ such that $\sum_{i=1}^k b_i \xi_i \in {\bf H}$ for any $(b_1, \dots, b_k) \in \mathbb{R}^k \backslash \{(0, 0, \dots, 0)\}$.
Then,
\begin{equation*}
\lim_{N \rightarrow \infty} \frac{1}{N}  \sum_{n=1}^N U_1^{[\xi_1(p_n)]} \cdots U_k^{[\xi_k(p_n)]} f = f^*,
\end{equation*}
strongly in $\mathcal{H}$, where $f^*$ is the projection of $f$ on $\mathcal{H}_{inv} (:= \{ f \in \mathcal{H} :  U_i f = f \,\, \textrm{for all} \,\, i\})$.
\end{Theorem}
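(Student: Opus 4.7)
\emph{Spectral reduction.} The plan is to follow the spectral template used in the proof of Theorem \ref{thm0.1} in \cite{BKMST}, with Theorem \ref{main-intro} supplying the new equidistribution input. By the spectral theorem for commuting unitary operators, there is a projection valued measure $E$ on $\mathbb{T}^k$ with $U_1^{n_1}\cdots U_k^{n_k} = \int_{\mathbb{T}^k} e(n_1\theta_1+\cdots+n_k\theta_k)\,dE(\theta)$ for $(n_1,\dots,n_k)\in\mathbb{Z}^k$, where $e(t):=e^{2\pi i t}$. Writing $f = f^* + g$ with $g\in\mathcal{H}_{inv}^\perp$, the scalar spectral measure $\mu_g$ puts no mass on $(0,\dots,0)$. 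Since the integrands are bounded by $1$ and $\mu_g$ is finite, dominated convergence reduces the theorem to the pointwise statement: for every $\theta = (\theta_1,\dots,\theta_k) \in \mathbb{T}^k \setminus \{(0,\dots,0)\}$,
\begin{equation*}
S_N(\theta) := \frac{1}{N}\sum_{n=1}^{N} e\Big(\sum_{j=1}^{k}\theta_j[\xi_j(p_n)]\Big) \longrightarrow 0.
\end{equation*}

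\emph{Fourier expansion and reduction to Theorem \ref{main-intro}.} Taking representatives $\theta_j\in[0,1)$, factor $e(\theta_j[\xi_j(p_n)]) = e(\theta_j \xi_j(p_n))\,h_{\theta_j}(\{\xi_j(p_n)\})$ where $h_\theta(t):=e(-\theta t)$. Given $\epsilon>0$, approximate each $h_{\theta_j}$ in $L^1([0,1))$ by a trigonometric polynomial $H_j(t)=\sum_{|m|\le M}c_{j,m}\,e(mt)$ with $\|h_{\theta_j}-H_j\|_{L^1}<\epsilon$. Since $e(m\{x\})=e(mx)$ for $m\in\mathbb{Z}$, expanding the product gives
\begin{equation*}
\prod_{j=1}^{k} e(\theta_j \xi_j(p_n))\,H_j(\{\xi_j(p_n)\}) = \sum_{\mathbf{m}\in[-M,M]^k\cap\mathbb{Z}^k} \Big(\prod_{j=1}^{k} c_{j,m_j}\Big)\, e\Big(\sum_{j=1}^{k} (\theta_j + m_j)\xi_j(p_n)\Big).
\end{equation*}
For each $\mathbf{m}$ the real vector $(\theta_j+m_j)_{j=1}^k$ is nonzero (otherwise $\theta_j\in\mathbb{Z}\cap[0,1)=\{0\}$ forces $\theta=0$), so by hypothesis $\eta_{\mathbf{m}}:=\sum_j(\theta_j+m_j)\xi_j\in\mathbf{H}$. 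The two alternatives defining $\mathbf{H}$ immediately imply $(\eta_{\mathbf{m}}(x)-P(x))/\log x\to\pm\infty$ for every $P\in\mathbb{Q}[x]$, so $\eta_{\mathbf{m}}$ satisfies condition (3) of Theorem \ref{main-intro}; hence $(\eta_{\mathbf{m}}(p_n))$ is u.d.\ mod $1$ and each inner Cesàro average tends to $0$. Since the outer sum over $\mathbf{m}$ is finite, the full $n$-average tends to $0$.

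\emph{Error control and main obstacle.} The discrepancy between $S_N(\theta)$ and the average just estimated is controlled by the telescoping bound
\begin{equation*}
\Big|\prod_j e(\theta_j[\xi_j(p_n)]) - \prod_j e(\theta_j\xi_j(p_n))H_j(\{\xi_j(p_n)\})\Big| \le C\sum_{j=1}^{k} \big|h_{\theta_j}-H_j\big|(\{\xi_j(p_n)\}),
\end{equation*}
with $C$ depending only on $\max_j\|H_j\|_\infty\le 1+\epsilon$. Applying Theorem \ref{main-intro} to each single $\xi_j\in\mathbf{H}$ yields u.d.\ of $(\{\xi_j(p_n)\})$ in $[0,1)$, so $\frac{1}{N}\sum_n|h_{\theta_j}-H_j|(\{\xi_j(p_n)\})\to\|h_{\theta_j}-H_j\|_{L^1}<\epsilon$. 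Letting first $N\to\infty$ and then $\epsilon\to 0$ gives $S_N(\theta)\to 0$. The genuinely delicate step is the Fourier approximation: because the periodic extension of $h_{\theta_j}$ has a jump when $\theta_j\notin\mathbb{Z}$, its Fourier coefficients decay only like $1/|m|$ and the natural expansion is not absolutely convergent, which is what forces the $L^1$-approximation with explicit $\epsilon$-bookkeeping and the two-step limit rather than a single clean estimate.
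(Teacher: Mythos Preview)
Your proof is correct and follows the same overall architecture as the paper: a spectral reduction to showing that $\frac{1}{N}\sum_{n\le N}e(\sum_j\gamma_j[\xi_j(p_n)])\to 0$ for every nonzero $\gamma\in\mathbb{T}^k$, followed by an argument that removes the integer parts and appeals to the equidistribution of linear combinations of the $\xi_j$ along primes.

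The only genuine difference is in how the integer parts are handled. The paper isolates this step as a separate result (Proposition~\ref{ud lemma}(i)) and proves it by splitting the coordinates of $\gamma$ into rational and irrational ones: for irrational $\gamma_j$ one uses that $(\gamma_j\xi_j(p_n),\xi_j(p_n))$ is u.d.\ in $\mathbb{T}^2$, while for rational $\gamma_j=d_j/q$ one uses that $[\xi_j(p_n)]$ is u.d.\ in $\mathbb{Z}_q$; the conclusion then follows from joint equidistribution in $\mathbb{T}^{2m}\times\mathbb{Z}_q^{k-m}$ and the Riemann integrability of the relevant test function. You instead treat all coordinates uniformly via the identity $e(\gamma_j[\xi_j])=e(\gamma_j\xi_j)\,h_{\gamma_j}(\{\xi_j\})$ and an explicit $L^1$ Fej\'er-type approximation of $h_{\gamma_j}$ by trigonometric polynomials. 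Your route is slightly more self-contained for this particular theorem and avoids the rational/irrational case split; the paper's route has the advantage of packaging the statement as a reusable proposition used again later in Section~\ref{sec : app}. One small point: to get the uniform bound $\|H_j\|_\infty\le 1$ you need to choose the approximating polynomials via convolution with a Fej\'er kernel (or similar), not an arbitrary $L^1$-close trigonometric polynomial; this is implicit in your write-up but worth stating.
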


\begin{Theorem}[cf.\ Theorem \ref{A} in Section 4]
Let $\xi_1, \dots , \xi_k \in {\bf H}$ such that $\sum\limits_{i=1}^k b_i \xi_i \in {\bf H}$ for any $(b_1, \dots, b_k) \in \mathbb{R}^k \backslash \{(0, 0, \dots, 0)\}$. 
Let $L: \mathbb{Z}^k \rightarrow \mathbb{Z}^m$ be a linear transformation and $(\psi_1(n), \dots, \psi_m(n)) = L([\xi_1(n)], \dots, [\xi_k(n)])$. 
Let $T_1, T_2, \dots , T_m$ be commuting, invertible measure preserving transformations on a probability space $(X, \mathcal{B}, \mu)$. 
Then, for any $A \in \mathcal{B}$ with $\mu(A) > 0$, one has 
$$\lim_{N \rightarrow \infty} \frac{1}{N} \sum_{n=1}^{ N} \mu(A \cap T_1^{-\psi_1(p_{n})} \cdots T_m^{-\psi_m(p_n)} A) \geq \mu^2(A).$$
\end{Theorem}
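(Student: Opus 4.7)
The plan is to reduce the statement to the preceding Theorem \ref{ergodic} via the Koopman formalism combined with a Khintchine-type Cauchy--Schwarz argument. First, I would pass to $\mathcal{H} := L^2(X,\mu)$ with Koopman operators $U_j f = f \circ T_j$, which are commuting unitaries, so that for any $\mathbf{n} = (n_1, \ldots, n_m) \in \mathbb{Z}^m$,
$$\mu(A \cap T_1^{-n_1}\cdots T_m^{-n_m} A) = \langle \mathbf{1}_A, U_1^{n_1}\cdots U_m^{n_m}\mathbf{1}_A \rangle.$$
Writing the linear map $L$ as an integer matrix $(L_{ji})$, we have $\psi_j(n) = \sum_{i=1}^k L_{ji}[\xi_i(n)]$, and by commutativity of the $U_j$ we can regroup
$$U_1^{\psi_1(p_n)}\cdots U_m^{\psi_m(p_n)} = \prod_{i=1}^k V_i^{[\xi_i(p_n)]}, \qquad V_i := \prod_{j=1}^m U_j^{L_{ji}}.$$
The operators $V_1, \ldots, V_k$ are commuting unitary operators on $\mathcal{H}$.

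Next, the hypothesis that $\sum_{i=1}^k b_i \xi_i \in \mathbf{H}$ for every non-zero $(b_1, \ldots, b_k) \in \mathbb{R}^k$ is exactly what is needed to apply Theorem \ref{ergodic} to $V_1, \ldots, V_k$ with the function $\mathbf{1}_A$. This gives
$$\lim_{N\to\infty} \frac{1}{N}\sum_{n=1}^N V_1^{[\xi_1(p_n)]}\cdots V_k^{[\xi_k(p_n)]}\mathbf{1}_A = \mathbf{1}_A^*$$
strongly in $\mathcal{H}$, where $\mathbf{1}_A^*$ is the orthogonal projection of $\mathbf{1}_A$ onto $\mathcal{H}_{\text{inv}} = \{f \in \mathcal{H} : V_i f = f \text{ for } 1 \le i \le k\}$. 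Taking inner products with $\mathbf{1}_A$ and using the identity of the first paragraph,
$$\lim_{N \to \infty} \frac{1}{N}\sum_{n=1}^N \mu\bigl(A \cap T_1^{-\psi_1(p_n)}\cdots T_m^{-\psi_m(p_n)} A\bigr) = \langle \mathbf{1}_A, \mathbf{1}_A^*\rangle = \|\mathbf{1}_A^*\|^2.$$

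To finish, observe that the constant function $\mathbf{1}$ lies in $\mathcal{H}_{\text{inv}}$, since each $V_i$ is a Koopman operator of a measure-preserving transformation. Hence $\langle \mathbf{1}_A^*, \mathbf{1}\rangle = \langle \mathbf{1}_A, \mathbf{1}\rangle = \mu(A)$, and Cauchy--Schwarz gives
$$\mu(A)^2 = |\langle \mathbf{1}_A^*, \mathbf{1}\rangle|^2 \le \|\mathbf{1}_A^*\|^2 \|\mathbf{1}\|^2 = \|\mathbf{1}_A^*\|^2,$$
which yields the claimed lower bound. I expect no substantive obstacle: the argument is a routine spectral reduction built on top of the (substantive) Theorem \ref{ergodic}, whose proof carries all the real work. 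The only bookkeeping point worth verifying is that the rearrangement defining $V_i$ is legitimate, which it is by commutativity of $U_1, \ldots, U_m$.
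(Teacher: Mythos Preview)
Your proof is correct. The paper takes a closely related but slightly different route: instead of invoking Theorem \ref{ergodic}, it works directly with the spectral measure. Via Bochner--Herglotz it writes $\mu(A\cap T_1^{-n_1}\cdots T_m^{-n_m}A)=\hat\nu(n_1,\dots,n_m)$ for a positive measure $\nu$ on $\mathbb{T}^m$, pushes $\nu$ forward along the transpose map $L^t:\mathbb{T}^m\to\mathbb{T}^k$ to a measure $\nu'$, and then uses Proposition \ref{ud lemma} to show that the averages converge to $\nu'\{0\}\ge \nu\{0\}\ge \mu(A)^2$. Your definition of $V_i=\prod_j U_j^{L_{ji}}$ is exactly the operator-theoretic counterpart of the $L^t$-pushforward, and your Cauchy--Schwarz step with the constant function is equivalent to the paper's (unproved but standard) assertion that $\nu\{0\}\ge\mu(A)^2$. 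The two arguments are essentially dual; yours has the small expository advantage of reusing Theorem \ref{ergodic} as a black box rather than re-running its spectral content, while the paper's version makes the role of Proposition \ref{ud lemma} more explicit.
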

 
\noindent Denote
\begin{equation*}
\begin{split}
 \bold{D}_{-1} &= \{ \left(P_1(p-1), \dots , P_l (p-1), [\xi_1(p)], \dots , [\xi_k (p)]  \right) | \, p \in\mathcal{P} \}, \\
 \bold{D}_{1} &= \{ \left( P_1( p+1), \dots ,P_l (p+1), [\xi_1(p)], \dots , [\xi_k (p)]  \right) | \, p \in\mathcal{P} \},
\end{split}
\end{equation*}
where $P_1, \dots,P_l \in \mathbb{Z}[x]$ with $P_i(0)=0$ for all $1 \leq i \leq l$ and $\xi_1, \dots, \xi_k \in {\bf H}$ such that $\sum_{j=1}^k b_j \xi_j(x) \in {\bf H}$ for any $(b_1, \dots, b_k) \in \mathbb{Z}^k \backslash \{(0, 0, \dots, 0)\}$. 

\begin{Theorem}[cf.\ Theorem \ref{recurrence}  and Remark \ref{end} in Section 4]
Enumerate the elements of ${\bold{D}}_i$, $(i = \pm 1)$, as follows:
$$\bold{d}_{n,i} =  \left( P_1(p_n + i), \dots , P_l(p_n + i) , [\xi_1(p_n )], \dots , [\xi_k(p_n )]  \right) \quad n=1, 2, \dots .$$
Let $(T^{\bold{d}})_{\bold{d} \in \mathbb{Z}^{l+k}}$ be a measure preserving $\mathbb{Z}^{m}$-action on a probability space $(X, \mathcal{B}, \mu)$.
Then ${D}_1$ and ${D}_{-1}$ are ``averaging" sets of recurrence:
  \begin{equation*}
  \lim_{N \rightarrow \infty} \frac{1}{N} \sum_{n=1}^N \mu(A \cap T^{-\bold{d}_{n,i}} A) > 0 \quad (i = \pm 1).
  \end{equation*} 
Moreover, for any $\epsilon>0$, $\{ \bold{d} \in \bold{D}_i : \mu(A \cap T^{-\bold{d}}A ) \geq \mu^2(A) - \epsilon \}$ has positive lower relative density in $\bold{D}_i$ $(i = \pm 1)$. 
\end{Theorem}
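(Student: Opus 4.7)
The plan is to deduce both statements from the lower bound $\liminf_{N} \frac{1}{N}\sum_{n=1}^N \mu(A\cap T^{-\bold{d}_{n,i}}A) \geq \mu(A)^2$. Once this is in hand, the Khintchine-type density statement follows by an elementary pigeonhole: if $0 \leq a_n \leq \mu(A)$ and $\liminf \frac{1}{N}\sum a_n \geq \mu(A)^2$, then for any $\epsilon>0$ the set $\{n : a_n \geq \mu(A)^2-\epsilon\}$ has lower density at least $\epsilon/(\mu(A) - \mu(A)^2 + \epsilon) > 0$, and through the natural enumeration $n \mapsto \bold{d}_{n,i}$ this translates to positive lower relative density in $\bold{D}_i$. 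The first (averaging) claim is then immediate for $\mu(A)>0$.

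To prove the lower bound, apply the spectral theorem to $\mathbf{1}_A$ under the Koopman unitaries $U^{\bold{d}}f = f\circ T^{\bold{d}}$: there is a positive Borel measure $\sigma_A$ on $\mathbb{T}^{l+k}$ with $\mu(A\cap T^{-\bold{d}}A) = \int e(\bold{d}\cdot\bold{t})\,d\sigma_A(\bold{t})$ and $\sigma_A(\{\bold{0}\}) = \|P_{\mathrm{inv}}\mathbf{1}_A\|^2 \geq \mu(A)^2$ (Cauchy--Schwarz against the invariant constant $1$). By Fubini, the Cesàro mean equals $\int \phi_N\,d\sigma_A$ with $\phi_N(\bold{t}) = \frac{1}{N}\sum_n e(\bold{t}\cdot\bold{d}_{n,i})$. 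Writing $\bold{t}=(\bold{t}',\bold{t}'')\in\mathbb{T}^l\times\mathbb{T}^k$, I claim that $\phi(\bold{t}) := \lim_N \phi_N(\bold{t})$ exists everywhere and equals $0$ for $\bold{t}''\neq\bold{0}$.

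To prove the claim, use $[\xi_j(y)]=\xi_j(y)-\{\xi_j(y)\}$ and bracket the $1$-periodic function $s\mapsto e(-t_j''s)$ on $[0,1)$ from above and below by trigonometric polynomials with arbitrarily small $L^1$ gap; this approximates $\phi_N$ by finite linear combinations of Weyl sums
\[
\frac{1}{N}\sum_{n=1}^N e(h_{\bold{k}}(p_n)),\quad h_{\bold{k}}(x) = \sum_{j=1}^l t_j'\,P_j(x+i) + \sum_{j=1}^k (t_j''+k_j)\,\xi_j(x),\ \bold{k}\in\mathbb{Z}^k.
\]
Since $\bold{t}''\in[0,1)^k\setminus\{\bold{0}\}$, the real vector $\bold{t}''+\bold{k}$ is never zero; combining the integer-combination $\bold{H}$-hypothesis with the valuation structure of the ambient Hardy field yields a dichotomy---either (a) $\sum_j(t_j''+k_j)\xi_j$ grows strictly faster than $\log x$, so $(h_{\bold{k}}-P)/\log x\to\pm\infty$ for every $P\in\mathbb{Q}[x]$ and Theorem \ref{main-intro} forces the Weyl sum to $0$; or (b) $h_{\bold{k}}$ reduces to a polynomial in $\mathbb{R}[x]$ having an irrational non-constant coefficient (forced by the $\bold{H}$-hypothesis, which rules out integer cancellation into rational polynomials), and Rhin's theorem yields vanishing. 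Letting the Fourier truncation refine gives $\phi(\bold{t})=0$ for $\bold{t}''\neq\bold{0}$. For $\bold{t}''=\bold{0}$ and $\bold{t}'\neq\bold{0}$ the sum reduces to $\frac{1}{N}\sum e(Q(p_n+i))$ with $Q=\sum t_j'P_j(\cdot+i)\in\mathbb{R}[x]$, and Rhin's theorem (irrational-coefficient case) together with the prime number theorem in arithmetic progressions (all-rational case) supply the limit.

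By bounded convergence, the Cesàro mean converges to $\int\phi\,d\sigma_A$, which is concentrated on $\mathbb{T}^l\times\{\bold{0}\}$. The restriction $\sigma_A|_{\mathbb{T}^l\times\{\bold{0}\}}$ is the spectral measure of $g := P_\xi\mathbf{1}_A$ under $T_1,\dots,T_l$, where $P_\xi$ projects onto the joint $T_{l+1},\dots,T_{l+k}$-invariant subspace; thus $\int\phi\,d\sigma_A = \lim_N \frac{1}{N}\sum_n \langle T_1^{-P_1(p_n+i)}\cdots T_l^{-P_l(p_n+i)} g, g\rangle$. This polynomial Khintchine-type average along primes for the sub-action (extending the method of \cite{BKMST} from monomials $(p-h)^{\alpha}$ to general $P_j\in\mathbb{Z}[x]$ with $P_j(0)=0$, via Rhin's theorem on the non-rational spectrum and PNT in APs on the rational spectrum) is bounded below by $\|P_{\mathrm{inv}} g\|^2 = \|P_{\mathrm{inv}}\mathbf{1}_A\|^2 \geq \mu(A)^2$, completing the argument. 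The main technical obstacles are the uniform $L^1$-bracketing of the jump-discontinuous Fourier kernel and the Hardy-field dichotomy for real (non-integer) linear combinations of the $\xi_j$; both require careful but standard arguments, the latter being essentially a Boshernitzan-type analysis of the growth orders of $\bold{H}$-elements.
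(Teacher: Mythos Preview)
Your plan rests on the inequality
\[
\liminf_{N\to\infty}\frac{1}{N}\sum_{n=1}^{N}\mu\bigl(A\cap T^{-\bold{d}_{n,i}}A\bigr)\ \ge\ \mu(A)^{2},
\]
and both conclusions are then extracted from it by an elementary pigeonhole. This inequality is \emph{false} in general, already in the simplest instance covered by the theorem. Take $l=1$, $k=0$, $P_{1}(x)=x$, $i=-1$, so that $\bold{d}_{n}=p_{n}-1$, and let $T$ be the shift $x\mapsto x+1$ on $\mathbb{Z}/5\mathbb{Z}$ with uniform measure and $A=\{0,1\}$. Then $\mu(A)=2/5$, while $\mu(A\cap T^{-d}A)$ equals $2/5,\,1/5,\,0,\,0,\,1/5$ for $d\equiv 0,1,2,3,4\pmod 5$ respectively. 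Since $p_{n}-1$ is equidistributed among the residues $0,1,2,3\pmod 5$ (each with density $1/4$) and never $\equiv 4$, the Ces\`aro limit equals
\[
\tfrac{1}{4}\cdot\tfrac{2}{5}+\tfrac{1}{4}\cdot\tfrac{1}{5}+\tfrac{1}{4}\cdot 0+\tfrac{1}{4}\cdot 0 \;=\;\tfrac{3}{20}\;<\;\tfrac{4}{25}\;=\;\mu(A)^{2}.
\]
Thus the ``polynomial Khintchine-type average along primes'' you invoke at the end (the lower bound $\ge\|P_{\mathrm{inv}}g\|^{2}$) is simply not available: on the rational spectrum the Weyl limits $\phi(\bold t)$ are Ramanujan-type sums that can have negative real part, and this spoils the inequality for suitably chosen $A$.

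This is precisely why the paper does \emph{not} argue through a global lower bound. Instead it splits $L^{2}=\mathcal{H}_{\mathrm{rat}}\oplus\mathcal{H}_{\mathrm{tot}}$, approximates the $\mathcal{H}_{\mathrm{rat}}$-component of $1_{A}$ by a periodic vector $f_{\bold a}$, and then passes to the sub-sequence $\bold D_{i}^{(r)}=\bold D_{i}\cap (r\mathbb{Z})^{m}$ with $r$ divisible by the period. On that sub-sequence $T^{\bold d}f_{\bold a}=f_{\bold a}$, which yields
\[
\lim_{N\to\infty}\frac{1}{N}\sum_{n=1}^{N}\mu\bigl(A\cap T^{-\bold{d}_{n,i}^{(r)}}A\bigr)\ \ge\ \mu(A)^{2}-\epsilon,
\]
and one then uses that $\bold D_{i}^{(r)}$ has \emph{positive relative density} in $\bold D_{i}$ (established via the equidistribution of $(\xi_{j}(p)/r)$ along primes in progressions) to deduce both the positive-density statement for $\{\bold d:\mu(A\cap T^{-\bold d}A)\ge\mu(A)^{2}-\epsilon\}$ and, by choosing $\epsilon=\mu(A)^{2}/2$, the averaging recurrence $\lim>0$. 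In the counterexample above, $r=5$ works: along $p\equiv 1\pmod 5$ one has $\mu(A\cap T^{-(p-1)}A)=2/5$ identically. The passage to a sublattice is essential here, not a technical convenience; without it your pigeonhole argument has no premise to stand on.
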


\begin{Theorem}[cf. Corollary \ref{semi ergodic cor} and Remark \ref{end} in Section 4]
If $E \subset \mathbb{Z}^{l+k}$ with ${d^*}(E) > 0$, then for any $\epsilon > 0$,
\begin{equation*}
\{ \bold{d} \in \bold{D}_i : d^*(E \cap E - \bold{d} ) \geq d^*(E)^2 - \epsilon \} 
 \end{equation*}
has positive lower relative density in $\bold{D}_i$ for $i= \pm 1$.
Furthermore,
$$\liminf_{N \rightarrow \infty} \frac{\left| \{ p \leq N : \left( P_1( p - 1), \dots , P_l(p - 1), [\xi_1(p)], \dots , [\xi_k(p)]  \right) \in E - E \} \right| }{\pi(N)} > 0.$$
$$\liminf_{N \rightarrow \infty} \frac{\left| \{ p \leq N:  \left( P_1( p + 1), \dots , P_l(p + 1), [\xi_1(p)], \dots , [\xi_k(p)]  \right) \in E - E \} \right| }{\pi(N)} > 0.$$ 
\end{Theorem}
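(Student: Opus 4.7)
The plan is to deduce this theorem from the preceding recurrence result for $\bold{D}_i$ via the $\mathbb{Z}^{l+k}$-version of Furstenberg's correspondence principle (Proposition \ref{correspondence}); once that ergodic statement is available, the present combinatorial and prime-counting conclusions follow formally. Starting from $E \subset \mathbb{Z}^{l+k}$ with $d^*(E) > 0$, I would apply Proposition \ref{correspondence} to obtain a probability space $(X, \mathcal{B}, \mu)$, commuting invertible measure preserving transformations $T_1, \dots, T_{l+k}$ of $X$, and a set $A \in \mathcal{B}$ with $\mu(A) = d^*(E)$ such that
$$ d^*\bigl(E \cap (E - \bold{d})\bigr) \;\geq\; \mu(A \cap T^{-\bold{d}}A) $$
for every $\bold{d} \in \mathbb{Z}^{l+k}$.

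Next, I would feed this $\mathbb{Z}^{l+k}$-action together with $A$ into the preceding recurrence theorem. That theorem guarantees that, for each $\epsilon > 0$, the set
$$ R_i(\epsilon) \;:=\; \bigl\{ \bold{d} \in \bold{D}_i : \mu(A \cap T^{-\bold{d}}A) \geq \mu^2(A) - \epsilon \bigr\} $$
has positive lower relative density in $\bold{D}_i$ for $i = \pm 1$. Since $\mu(A) = d^*(E)$, combining with the correspondence inequality yields $R_i(\epsilon) \subseteq \{\bold{d} \in \bold{D}_i : d^*(E \cap (E - \bold{d})) \geq d^*(E)^2 - \epsilon\}$, which establishes the first assertion. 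For the two $\liminf$ statements involving $\pi(N)$, I would invoke the averaging (Ces\`aro) form of the same preceding theorem, namely $\lim_{N} \frac{1}{N} \sum_{n=1}^N \mu\bigl(A \cap T^{-\bold{d}_{n,i}}A\bigr) > 0$. Composing this with the correspondence inequality gives $\liminf_N \frac{1}{N} \sum_{n=1}^N d^*\bigl(E \cap (E - \bold{d}_{n,i})\bigr) > 0$, and since each summand lies in $[0,1]$, the set of indices $n$ with $\bold{d}_{n,i} \in E - E$ must have positive lower density in $\mathbb{N}$. Reparametrizing $n \leftrightarrow p_n$ and recalling $|\{p \in \mathcal{P} : p \leq N\}| = \pi(N)$, this translates into the claimed $\liminf > 0$ with $\pi(N)$ in the denominator for both $i = +1$ and $i = -1$.

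The substantive mathematical input, namely the equidistribution statement of Theorem \ref{main-intro} and the ergodic recurrence on $\bold{D}_i$, has already been absorbed into the preceding theorem, so there is no new analytic obstacle at this step. The only point requiring a little care is the conversion between the two notions of ``density'' present: positive lower \emph{relative} density in $\bold{D}_i$ is defined through cubes $[-n,n]^{l+k}$ in $\mathbb{Z}^{l+k}$ and counts distinct tuples, whereas the $\liminf$ with $\pi(N)$ normalises by primes $p \leq N$. Using the $n$-indexed Ces\`aro form of the recurrence theorem, as above, is the cleanest way to bypass this point, since both the ergodic average and the prime-counting ratio are indexed by the same $n$, so a positive Ces\`aro lower bound on the ergodic side translates directly into positive lower density on the prime side.
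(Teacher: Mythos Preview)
Your proposal is correct and follows the same approach the paper implicitly uses: the corollary is not given an explicit proof in the paper, but is stated immediately after Theorem \ref{recurrence} with the understanding (made explicit earlier, e.g.\ in the introduction and in the derivation of Corollary \ref{prop}) that it follows from the ergodic statement via the Furstenberg correspondence principle, Proposition \ref{correspondence}. Your handling of the $\pi(N)$-normalized $\liminf$ via part (i) of Theorem \ref{recurrence} and the reparametrization $n \leftrightarrow p_n$ is also the intended route.
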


The structure of the paper is as follows. In Section 2 we establish various differential inequalities for functions in Hardy fields which are needed for the proofs of uniform distribution results in Section 3. We also collect in Section 2 various auxiliary number theoretical results. Section 4 is devoted to various applications. These include (some refinements of) the results formulated in this introduction as well as new results pertaining to sets of recurrence in $\mathbb{Z}^d$.

\subsection*{Notation} The following notation will be used throughout this paper.
\begin{enumerate}
\item We write $e(x) = \exp (2 \pi i x)$.
\item For positive $Y$, $X \ll Y$ (or $X= O(Y)$) means $ |X| \leq c Y $ for some positive constant $c$.
\item  $X \asymp Y$ means $c_1 X \leq Y \leq c_2 X$ for some positive constants $c_1, c_2$.
\item $\sum_{p\leq N}$ denotes the sum over primes in the interval $[1,N]= \{1,2, \dots, N\}$.
\item $\phi(n)$ is Euler's totient function, which is defined as the number of positive integers $\leq n$ that are relatively prime to $n$.   
\item  By $\pi(x)$, we denote the number of primes not exceeding $x$.
\item The M\"obius function $\mu: \mathbb{N} \rightarrow \mathbb{R}$ is defined by
$$\mu(n) = \begin{cases}
  0 & \text{if $n$ has one or more repeated prime factors} \\
  1 & \text{if $n=1$} \\
  (-1)^k  &\text{if $n$ is a product of $k$ distinct primes}. 
\end{cases}$$
\item The von Mangoldt function $\Lambda: \mathbb{N} \rightarrow \mathbb{R}$ is defined by
$$\Lambda(n) = \begin{cases}
  \log p & \text{if $n = p^k$ for some prime $p$ and integer $k \geq 1$} \\
  0 & \text{otherwise}. 
\end{cases}$$
\item $\Lambda_1(n)$ denotes the characteristic function of the set of primes,
 $$
\Lambda_1(n) = \left\{
        \begin{array}{ll}
            1 & \quad \textrm{if} \,\, n \, \text{is a prime number} \\
            0 & \quad \textrm{otherwise}.
        \end{array}
    \right.
$$
The function $\Lambda_1 (n, r, a)$ is defined by
$$
 \Lambda_1 ( n, r ,a ) =
\begin{cases}
1, & \text{if }n \,\, \text{is a prime number and} \, n \, \equiv a \, (\bmod \, r)  \\
0, & \text{otherwise}.
\end{cases}
$$
\item  For sets $A \subset B \subset \mathbb{Z}^m$, the relative density and the lower relative density of $A$ with respect to $B$ are defined as 
$$ \lim_{n \rightarrow \infty} \frac{|A \cap [-n,n]^m|}{|B \cap [-n,n]^m|} \quad \text{and} \quad  \liminf_{n \rightarrow \infty} \frac{|A \cap [-n,n]^m|}{|B \cap [-n,n]^m|}.$$ 
\item We write $f(x) \uparrow \infty$ if $f(x)$ is eventually increasing and $\lim\limits_{x \rightarrow \infty} f(x) = \infty$ and $f(x) \downarrow 0$ if $f(x)$ is eventually decreasing and $\lim\limits_{x \rightarrow \infty} f(x) = 0$.
\item By a slight abuse of notation we write $Tf(x) = f(Tx)$ for a measure preserving transformation $T$ and a function $f$.
\item $\{ x \}$ denotes the fractional part of a real number $x$.
\item The notation $\| \cdot \|$ must be handled with care: in Sections 2 and 3, for $x \in \mathbb{R}$, $\| x \|$ denotes the distance to the nearest integer of $x$ and in Section 4, $\|f\|_{\mathcal{H}} = \langle f, f\rangle^{1/2}$ is the norm of $f$ in a Hilbert space $\mathcal{H}$.
\end{enumerate}

\section{Preliminaries}
\label{sec2}
\subsection{Differential inequalities for functions from Hardy fields}\mbox{}

Following Boshernitzan \cite{Bos} we say that a subpolynomial function $f(x) \in {\bf U}$ is of {\it type $x^{l+}$} $(l = 0, 1, 2, \dots )$ if 
$$\lim_{x \rightarrow \infty} \frac{x^l}{f(x)} = \lim_{x \rightarrow \infty} \frac{f(x)}{x^{l+1}}=0.$$

\begin{Proposition}[cf. Ch. VI in \cite{Har2}] 
For any function $f$ belonging to a Hardy field such that no derivative $f^{(n)}$ satisfies $|f^{(n)}| \asymp 1$,
\begin{equation}
\label{e0}
\left| \frac{x f^{(n+1)}(x)}{f^{(n)}(x)} \right| \gg \frac{1}{\log^2 x}.
\end{equation}
\end{Proposition}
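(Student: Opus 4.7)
The plan is to reduce to the case $n=0$ and then argue by a logarithmic integration. First, observe that setting $g = f^{(n)}$, the target inequality becomes $|xg'(x)/g(x)| \gg 1/\log^2 x$, with $g$ lying in a Hardy field (since Hardy fields are closed under differentiation) and $|g|\not\asymp 1$ by hypothesis. Thus it suffices to prove the $n=0$ case: for every Hardy-field germ $g$ with $|g|\not\asymp 1$, one has $|xg'(x)/g(x)|\gg 1/\log^2 x$.

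To handle this base case, I would work inside an enlargement of the ambient Hardy field that contains $\log x$ (such an extension exists by standard Hardy-field constructions, cf.\ \cite{Bos}). Then the germ
$$R(x) \;:=\; \frac{xg'(x)}{g(x)}\,\log^2 x$$
lies in a Hardy field, and so $\lim_{x\to\infty}R(x)$ exists in $\mathbb{R}\cup\{\pm\infty\}$. I would argue by contradiction: assume this limit is $0$. Then for every $\varepsilon>0$ and all sufficiently large $t$, one has $|g'(t)/g(t)|\leq\varepsilon/(t\log^2 t)$; since $\int_{x_0}^{\infty}dt/(t\log^2 t)=1/\log x_0<\infty$, integrating from $x_0$ to $x$ shows that $\log|g(x)|$ is Cauchy as $x\to\infty$ and therefore tends to a finite limit. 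Consequently $|g(x)|$ tends to a positive constant, i.e., $|g|\asymp 1$, contradicting the hypothesis.

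Hence $\lim_{x\to\infty} R(x)\in(\mathbb{R}\setminus\{0\})\cup\{\pm\infty\}$, so $|R(x)|$ is eventually bounded below by a positive constant, which rearranges to the claimed bound $|xg'(x)/g(x)|\gg 1/\log^2 x$.

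The main obstacle is the Hardy-field comparison step: one must be sure that $xg'/g$ and $1/\log^2 x$ can be placed in a common Hardy field so that the limit of their quotient $R$ is well defined (ruling out oscillation). This is standard in Hardy-field theory, but it is exactly the point at which the specific rate $1/\log^2 x$ emerges, since it is precisely the logarithmic-derivative threshold beyond which $\int^{\infty}dt/(t\log^2 t)$ converges. Everything else in the proof is then a routine integration.
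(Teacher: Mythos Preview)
Your proposal is correct and follows essentially the same route as the paper: set $g=f^{(n)}$, assume the bound fails, and integrate $g'/g$ against the convergent integral $\int dt/(t\log^2 t)$ to force $\log|g|$ to stay bounded, contradicting $|g|\not\asymp 1$. The only cosmetic difference is that the paper splits into the cases $|g|\to\infty$ and $|g|\to 0$ and derives the contradiction in each, whereas you phrase it as $\log|g(x)|$ having a finite limit; and you are somewhat more explicit than the paper about why the failure of $\gg$ yields an eventual upper bound (via existence of $\lim R$ in the Hardy field), a point the paper takes for granted.
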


\begin{proof}
Note that $\lim\limits_{x \rightarrow \infty} |f^{(n)} (x)|$ is either $0$ or $\infty$. Let $g(x) = f^{(n)} (x)$. Suppose \eqref{e0} does not hold, that is, for some $C >0$, 
$$\left| \frac{x g'(x)}{g(x)} \right| \leq \frac{C}{ \log^2 x}.$$

\noindent If $\lim\limits_{x \rightarrow \infty} |g(x)| = \infty$, then $\frac{g'(x)}{g(x)} \leq \frac{C}{x \log^2 x}$. Integrating on both sides from $a$ to $\infty$, where $a$ is some positive constant, we obtain
$$ \infty =  \lim\limits_{x \rightarrow \infty} \log |g(x)| -  \log |g(a)| = \int_a^{\infty} \frac{C}{x \log^2 x} < \infty,$$
 which gives a contradiction.

\noindent If $\lim\limits_{x \rightarrow \infty} |g(x)| = 0$, then $- \frac{g'(x)}{g(x)} \leq \frac{C}{x \log^2 x}$. Again, integrating on both sides leads to a contradiction: 
$$ \infty = \lim\limits_{x \rightarrow \infty}  (- \log |g(x)|) + \log |g(a)| = \int_a^{\infty} \frac{C}{x \log^2 x} < \infty.$$
\end{proof}


\begin{Proposition}
\label{Fejer}
Let $f(x) \in {\bf U}$ be a subpolynomial function. Suppose that 
\begin{equation}
\label{FC1}
f(x) \,\, \text{ is of the type} \,\, x^{0+} \quad \text{and} \quad \lim_{x \rightarrow \infty} \left| \frac{f(x)}{\log x}\right| = \lim_{x \rightarrow \infty} x |f'(x)| = \infty.
\end{equation}
 Then, for any $j \geq 1$ and sufficiently large $x$, we have
\begin{equation}
\label{e2}
\frac{1}{2 \log x} \ll  \frac{x f'(x)}{f(x)}  \ll 1,
\end{equation}
\begin{equation}
\label{e3}
\frac{1}{\log^2 x} \ll  \left| \frac{xf^{(j+1)} (x)}{f^{(j)}(x)} +j \right| \ll 1,
\end{equation}
\begin{equation}
\label{e4}
\frac{1}{\log^2 x} \ll  \left| \frac{xf^{(j+1)} (x)}{f^{(j)}(x)} \right|  \ll 1,
\end{equation}
\begin{equation}
\label{e1}
0 \ll  \frac{f'(x)}{f'(2x)}  \ll \log x.
\end{equation}
\end{Proposition}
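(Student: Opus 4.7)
The plan is to extract the asymptotic exponent $c := \lim_{x \to \infty} xf'(x)/f(x)$, which exists in the ambient Hardy field and, by L'Hopital, equals $\lim \log|f(x)|/\log x$. The type $x^{0+}$ condition ($|f| \to \infty$ and $f/x \to 0$) pins $c \in [0,1]$, yielding the upper bound in (\ref{e2}). For the lower bound in (\ref{e2}), applying L'Hopital once more against $\log\log x$ gives $\lim (xf'/f)\log x = \lim \log|f|/\log\log x$; the hypothesis $|f|/\log x \to \infty$ forces $\log|f| \geq \log\log x + C$ eventually, so the latter limit is at least $1$, hence $xf'(x)/f(x) \geq 1/(2\log x)$ for large $x$.

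For the upper bounds in (\ref{e3}) and (\ref{e4}), I would show that $u_j(x) := xf^{(j+1)}(x)/f^{(j)}(x)$ has a finite Hardy-field limit for each $j \geq 1$. Ruling out $u_j \to +\infty$: integrating $u_j/x = (\log|f^{(j)}|)'$ would force $|f^{(j)}(x)| \geq x^M$ for every $M$, contradicting the preservation of subpolynomiality under differentiation in Hardy fields. Ruling out $u_j \to -\infty$: that would give super-polynomial decay of $f^{(j)}$, and integrating $j$ times (with constants of integration forced to vanish, since a nonzero limit of some $f^{(k)}$ with $k \geq 1$ would force $f^{(k-1)}$ to grow at least linearly, violating type $x^{0+}$) yields $f \to 0$, contradicting $f \to \infty$. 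Thus $u_j$ is bounded, giving $|u_j|, |u_j + j| \ll 1$.

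For the lower bounds I apply (\ref{e0}) in two guises. For (\ref{e4}), take $g = f^{(j)}$ and $n = 0$: (\ref{e0}) yields $|u_j| = |xg'/g| \gg 1/\log^2 x$ provided $|f^{(j)}| \not\asymp 1$, which holds because $f/x \to 0$ yields $f' \to 0$ by L'Hopital, and inductively every $f^{(k)} \to 0$ for $k \geq 1$ (a nonzero Hardy-field limit of $f^{(k+1)}$ would make $|f^{(k)}| \to \infty$). For (\ref{e3}), I rewrite $u_j + j = x(x^j f^{(j)})'/(x^j f^{(j)})$ and apply (\ref{e0}) to $g = x^j f^{(j)}$; the remaining obligation is to exclude $|x^j f^{(j)}| \asymp 1$. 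For $j = 1$ this is immediate from $xf' \to \infty$; for $j \geq 2$, the assumption $x^j f^{(j)} \to A \neq 0$ makes $f^{(j)} \sim A/x^j$ integrable, and integrating (with vanishing constants of integration, as above) descends the $\asymp 1$ property to $|x^{j-1}f^{(j-1)}|$, and iteratively all the way down to $|xf'| \asymp 1$, contradicting the hypothesis.

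Finally, (\ref{e1}) follows from $\log f'(x) - \log f'(2x) = -\int_x^{2x} u_1(t)/t\,dt$, which is bounded in absolute value (since $|u_1|$ is uniformly bounded and $\int_x^{2x} dt/t = \log 2$), so $f'(x)/f'(2x) \asymp 1$ --- considerably stronger than both stated bounds. The principal technical obstacle is the boundedness step for $u_j$ (ruling out limits $\pm\infty$) and the descent argument excluding $|x^j f^{(j)}| \asymp 1$, both of which intertwine all three hypotheses on $f$.
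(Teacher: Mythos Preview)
Your proposal is correct, and it diverges from the paper's argument in several places worth noting. For the upper bounds in (\ref{e3}) and (\ref{e4}) the paper observes, via a single iterated L'H\^opital step, that
\[
\lim_{x\to\infty}\frac{xf'(x)}{f(x)}=\lim_{x\to\infty}\left(\frac{xf^{(j+1)}(x)}{f^{(j)}(x)}+j\right),
\]
so finiteness of $u_j$ (and of $u_j+j$) is inherited directly from (\ref{e2}); this is shorter than your route of separately excluding $u_j\to\pm\infty$ by integrating $(\log|f^{(j)}|)'$, though your integration argument is perfectly valid. For the lower bound in (\ref{e2}) the paper instead differentiates the auxiliary $h(x)=xf'(x)\log x-f(x)$ to show it is eventually increasing; your L'H\^opital against $\log\log x$ reaches the same conclusion with comparable effort. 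For (\ref{e1}) your argument is actually sharper: integrating $u_1(t)/t$ over $[x,2x]$ with $|u_1|$ bounded yields $f'(x)/f'(2x)\asymp 1$, whereas the paper only extracts $\ll\log x$ by sandwiching $f'$ via (\ref{e2}). One small caution: the proposition containing (\ref{e0}) is stated under the hypothesis that \emph{no} derivative $g^{(n)}$ satisfies $|g^{(n)}|\asymp 1$, so when you apply it to $g=x^jf^{(j)}$ you should, strictly speaking, rule out $|g^{(n)}|\asymp 1$ for all $n$, not just $n=0$; the paper does this by showing $|x^kf^{(k)}|\to\infty$ for every $k$ (again via L'H\^opital, from $xf'\to\infty$) together with $x^mf^{(n)}\to 0$ for $m<n$. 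Your descent argument handles the $n=0$ case, and the proof of (\ref{e0}) in fact only uses the single relevant $n$, so this is a formal rather than substantive gap.
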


\begin{proof}
Without loss of generality, we can assume that 
$$f(x) \uparrow \infty, \quad f'(x) \downarrow 0, \quad xf'(x) \uparrow \infty.$$
We start by proving \eqref{e2}. 
Consider $g(x) = f(x) - x f'(x)$. 
Then $g'(x) = - xf''(x)$. Since $f'(x) \downarrow 0$, $f''(x) < 0$, so $g'(x) > 0$ eventually. 
Hence there exists $K$ such that $g(x) > K$. Then $\frac{x f'(x)}{f(x)} \leq 1 - \frac{K}{f(x)}$. 
Since $f(x) \uparrow \infty$, we have $$\frac{x f'(x)}{f(x)} \ll 1.$$
Now consider $h(x) := x f'(x) \log x - f(x)$. 
Then $h'(x) = (x f''(x) + f'(x)) \log x$. 
Since $xf'(x)$ is increasing, $xf''(x) + f'(x) = (xf'(x))' \geq 0$. 
Thus, $h'(x) \geq 0$, $h(x) = x f'(x) \log x - f(x) > C$ for some constant $C$. 
Then $$\frac{x f'(x)}{f(x)} \geq \left(1+ \frac{C}{f(x)} \right) \frac{1}{\log x}.$$ 
Since $f(x) \uparrow \infty$, \eqref{e2} follows.

\noindent We turn now our attention to formulas \eqref{e3} and \eqref{e4}.
By L'Hospital's rule,
$$\lim_{x \rightarrow \infty} \frac{x f'(x)}{f(x)} = \lim_{x \rightarrow \infty} \frac{x f^{(j+1)}(x)}{f^{(j)}(x)} + j.$$
From \eqref{e2}, 
$$ \left| \frac{xf^{(j+1)} (x)}{f^{(j)}(x)} +j \right| \ll 1, \quad  \left| \frac{xf^{(j+1)} (x)}{f^{(j)}(x)} \right|  \ll 1. $$
From \eqref{e0}, 
$$ \frac{1}{\log^2 x} \ll  \left| \frac{xf^{(j+1)} (x)}{f^{(j)}(x)} \right|.$$
Now let $q(x) = x^j f^{(j)}(x)$. 
Note that from $\lim\limits_{x \rightarrow \infty} x f'(x) = \infty$, by L'Hospital's rule
$$ \lim_{x \rightarrow \infty} |x^2 f''(x)| = \lim_{x \rightarrow \infty} \left|\frac{f'(x)}{1/x}\right| = \infty.$$
By a similar argument, we have $\lim\limits_{x \rightarrow \infty} |x^k f^{(k)}(x)| = \infty$ for all $k$. 
Now, $\lim\limits_{x \rightarrow \infty} f'(x) =0$ implies $\lim\limits_{x \rightarrow \infty} |x^m f^{(n)}(x)| =0$ for any non-negative integers $m < n$. 
Thus no $q^{(n)}$ satisfy $q^{(n)} \asymp 1$.  
Then from \eqref{e0},
$$\frac{1}{\log^2 x} \ll \left| \frac{x g'(x)}{g(x)}\right| = \left| \frac{x f^{(j+1)}(x)}{f^{(j)}(x)} + j \right|.$$

\noindent Finally, let us prove \eqref{e1}. 
From \eqref{e2}, we have
$$\frac{f(x)}{2 x \log x} \ll  f'(x)  \ll \frac{f(x)}{x}. $$
Hence 
$$ \frac{f'(x)}{f'(2x)} \ll \frac{f(x)}{x} \cdot \frac{2 (2x) \log 2x}{f(2x)} =4 \frac{f(x)}{f(2x)} \log 2x \leq 4 \log 2x, $$
since $f(x) \leq f(2x)$.
\end{proof}

\begin{Remark} 
The proof actually shows that in the formulas \eqref{e0}, \eqref{e3} and \eqref{e4} one can replace $\log^2 x$ with $\log^{1 + \epsilon} x$ for any $\epsilon > 0$. 
\end{Remark}

\begin{Proposition}
\label{tempered}
Let $f(x) \in {\bf U}$ be a subpolynomial function. 
If $f(x)$ is of the type $x^{l+}$ for some $l \geq 1$, then for any non-negative integer $j$ and any $\epsilon >0$
\begin{equation}
\label{e5}
\left| j + \frac{x f^{(j+1)} (x)}{f^{(j)} (x)}\right| \asymp 1
\end{equation}
and
\begin{equation}
\label{e6}
x^{\beta - j - \epsilon} \ll |f^{(j)}(x)| \ll x^{\beta-j+\epsilon}
\end{equation}
for some fixed $\beta \in [l,l+1]$.
\end{Proposition}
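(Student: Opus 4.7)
The strategy is to define $\beta$ as $\lim_{x\to\infty} xf'(x)/f(x)$ and then show, by induction on $j$, that every derivative $f^{(j)}$ inherits this ``order'' shifted by $j$. First, since $xf'(x)/f(x)$ belongs to the same Hardy field as $f$, its limit $\beta$ exists in $\mathbb{R} \cup \{\pm\infty\}$. The type $x^{l+}$ condition with $l \geq 1$ forces $f(x)\to\infty$, so L'H\^opital applied to $\log f(x)/\log x$ identifies $\lim_{x\to\infty} \log f(x)/\log x = \beta$; combined with $x^l < f(x) < x^{l+1}$ eventually, this pins $\beta \in [l,l+1]$, in particular $\beta \geq 1$.

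A preliminary observation is that no derivative $f^{(j)}$ satisfies $|f^{(j)}| \asymp 1$: if $|f^{(j)}| \asymp 1$ for some $j\geq 1$, then successive integration yields $|f| \asymp x^j$, incompatible with $f$ being of type $x^{l+}$ (which excludes $f \asymp x^k$ for any integer $k$). Consequently each $|f^{(j)}|$ tends to $0$ or $\infty$, and the preceding Proposition guarantees the lower bound $|xf^{(j+1)}/f^{(j)}| \gg 1/\log^2 x$.

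I would then prove by induction on $j$ the twin identities
$$\gamma_j := \lim_{x\to\infty} \frac{\log|f^{(j)}(x)|}{\log x} = \beta - j, \qquad \beta_j := \lim_{x\to\infty} \frac{xf^{(j+1)}(x)}{f^{(j)}(x)} = \beta - j.$$
Both limits exist by Hardy-field closure, and L'H\^opital on $\log|f^{(j)}|/\log x$ gives $\gamma_j = \beta_j$. The base case is $\gamma_0 = \beta$. For the inductive step, the relation $\beta_j \to \beta - j$ (nonzero unless $\beta$ happens to equal $j$) forces $|f^{(j+1)}| \asymp |f^{(j)}|/x$ up to $(\log x)^{O(1)}$ factors; when the limit is zero, the uniform lower bound \eqref{e0} still yields $|f^{(j+1)}| \gg |f^{(j)}|/(x\log^2 x)$, which suffices to conclude $\gamma_{j+1} = \gamma_j - 1$ and hence $\gamma_{j+1} = \beta - (j+1)$.

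The two conclusions now follow at once. For \eqref{e5}, $j + xf^{(j+1)}/f^{(j)} \to j + \beta_j = \beta \in [l,l+1]$, a finite positive constant, hence the quantity is $\asymp 1$ for large $x$. For \eqref{e6}, $\gamma_j = \beta - j$ translates directly into $x^{\beta - j - \epsilon} \ll |f^{(j)}(x)| \ll x^{\beta - j + \epsilon}$ for every $\epsilon > 0$ and sufficiently large $x$. The main technical obstacle I anticipate is the degenerate case $\beta_j = 0$, which can occur only when $\beta$ is the integer $l$ or $l+1$ and $j = \beta$: there the naive comparison $f^{(j+1)} \sim \beta_j f^{(j)}/x$ collapses, and one must lean on the $(\log x)^{-2}$ lower bound from \eqref{e0} to absorb the loss into the $\epsilon$ slack.
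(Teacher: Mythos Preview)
Your proposal is correct and follows essentially the same route as the paper: define $\beta$ via $\lim xf'/f = \lim \log f/\log x$, pin it in $[l,l+1]$, and then propagate the relation $\lim xf^{(j+1)}/f^{(j)} = \beta - j$ through all derivatives.

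The one organizational difference worth noting is how the induction is packaged. You carry two limits $\gamma_j$ and $\beta_j$ and step from $j$ to $j+1$ via $|f^{(j+1)}| \asymp |f^{(j)}|/x$, which forces you to treat the degenerate case $\beta_j = 0$ (i.e.\ $\beta = j$) separately using the $1/\log^2 x$ lower bound from \eqref{e0}. The paper instead applies L'H\^opital directly to the ratio $(xf')^{(j)}/f^{(j)}$, exploiting the Leibniz identity $(xf')^{(j)} = jf^{(j)} + xf^{(j+1)}$: since this ratio equals $j + xf^{(j+1)}/f^{(j)}$ and its limit is always $\beta \geq l \geq 1$, the degenerate case never arises. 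Your version works, but the paper's formulation sidesteps the obstacle you flagged in your last paragraph.
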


\begin{proof}
Without loss of generality we can assume that $f(x) >0$ eventually.
First, by L'Hospital's rule,
\begin{equation}
\label{e7}
\beta = \lim_{x \rightarrow \infty} \frac{\log f(x)}{\log x} = \lim_{x \rightarrow \infty} \frac{xf'(x)}{f(x)}.
\end{equation} 
Since 
$$\lim_{x \rightarrow \infty} \frac{x^l}{f(x)} = \lim_{x \rightarrow \infty} \frac{l!}{f^{(l)} (x)} = 0 \quad \text{and} \quad
\lim_{x \rightarrow \infty} \frac{f(x)}{x^{l+1}} = \lim_{x \rightarrow \infty} \frac{f^{(l+1)} (x)}{(l+1)!} = 0,$$
we have $x^{l} \ll f(x) \ll x^{l+1}$, which proves $l \leq \beta \leq l+1$. 

\noindent Again, by L'Hospital's rule, if $\lim\limits_{x \rightarrow \infty} \frac{(xf'(x))^{(j)}}{(f(x))^{(j)}} = \beta$, then $\lim\limits_{x \rightarrow \infty} \frac{(xf'(x))^{(j+1)}}{(f(x))^{(j+1)}} = \beta$
since
\begin{enumerate}
\item $\lim\limits_{x \rightarrow \infty} f^{(j)} (x) = \pm \infty$ or $0$ from that $f(x)$ is of the type $x^{l+}$.
\item $\lim\limits_{x \rightarrow \infty} f^{(j)} (x) =\lim\limits_{x \rightarrow \infty} (xf'(x))^{(j)}= \pm \infty$ or $0$ from that $0 < \beta < \infty$.
\end{enumerate}
Note now that $$\frac{(xf'(x))^{(j)}}{f^{(j)}(x)} = j + \frac{x f^{(j+1)}(x)}{f^{(j)}(x)},$$
which proves \eqref{e5}. 
Also, \eqref{e7} implies that for any $\epsilon >0$ we have 
$$x^{\beta- \epsilon} \ll f(x) \ll x^{\beta + \epsilon}.$$

\noindent Now we claim that for any $a \ne 0$,
\begin{equation}
\label{eq:a,b}
\lim_{x \rightarrow \infty} \frac{f^{(j)}(x)}{x^{a}} = \lim_{x \rightarrow \infty} \frac{f^{(j+1)}(x)}{a x^{a-1}}.
\end{equation}
Choose $b \in \mathbb{R}$ such that both $f^{(j)}(x) x^b$ and $x^{a+b}$ converge to $0$ as $x \rightarrow \infty$. 
Then by applying L'Hosptial's rule we get
$$\lim_{x \rightarrow \infty} \frac{f^{(j)}(x)}{x^a} = \lim_{x \rightarrow \infty} \frac{x^b f^{(j)}(x)}{x^{a+b}} = \lim_{x \rightarrow \infty} \frac{b}{a+b} \frac{f^{(j)}(x)}{x^a} + \lim_{x \rightarrow \infty} \frac{1}{a+b} \frac{f^{(j+1)}(x)}{x^{a-1}}.$$
Thus,
$$\frac{a}{a+b}\lim_{x \rightarrow \infty} \frac{f^{(j)}(x)}{x^a} =  \frac{1}{a+b} \frac{f^{(j+1)}(x)}{x^{a-1}}.$$
Dividing both sides with $\frac{a}{a+b}$, we obtain \eqref{eq:a,b}.
Hence, $$x^{\beta-j - \epsilon} \ll |f^{(j)} (x)| \ll x^{\beta-j+ \epsilon},$$ which proves \eqref{e6}.
\end{proof}

\subsection{Some classical number-theoretic lemmas}\mbox{}

We collect in this subsection some classical results which will be needed for the proofs in the next section. 
Recall that $\pi(x)$ is the number of primes less than or equal to $x$.
\begin{Lemma}[Erd\H os-Tur\'an; cf. Theorem 1.21 in \cite{DT} or Theorem 2.5 in \cite{KN}]\mbox{}
\label{lem1}
For a sequence $(a_n)_{n \in \mathbb{M}}$ of real numbers, we define 
 $$D(x) := \sup_{[\alpha, \beta] \subset [0,1]} \left| \frac{1}{\pi(x)} \big| \{ p \in [1,x] : \{a_p\} \in [\alpha, \beta] \}\big| - (\beta- \alpha)\right|.$$
For any $Q$ we have
$$D(x) \ll \frac{1}{Q} + \frac{1}{\pi(x)} \sum_{q \leq Q} \frac{1}{q} \left| \sum_{p \leq x} e(q a_p) \right|.$$
\end{Lemma}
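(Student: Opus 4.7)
The plan is to reduce the statement to the classical Erd\H{o}s--Tur\'an inequality applied to the finite sequence obtained by enumerating $(a_n)$ only at prime indices. Concretely, list the primes up to $x$ as $p_1 < p_2 < \cdots < p_{\pi(x)}$ and set $y_j := a_{p_j}$ for $1 \le j \le \pi(x)$. The classical form (Theorem~1.21 of \cite{DT}, Theorem~2.5 of \cite{KN}) asserts that for any finite sequence $(y_j)_{j=1}^{N}$ of real numbers and any positive integer $Q$,
\[
\sup_{[\alpha,\beta] \subset [0,1]} \left| \frac{1}{N} \bigl|\{ 1 \le j \le N : \{y_j\} \in [\alpha,\beta] \}\bigr| - (\beta - \alpha) \right| \ll \frac{1}{Q} + \frac{1}{N} \sum_{q=1}^{Q} \frac{1}{q} \left| \sum_{j=1}^{N} e(q y_j) \right|.
\]
Taking $N = \pi(x)$ and $y_j = a_{p_j}$ turns the right-hand side into exactly the right-hand side of the stated bound, since $\sum_{j=1}^{\pi(x)} e(q a_{p_j}) = \sum_{p \le x} e(q a_p)$, and the left-hand side is precisely $D(x)$.

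For a self-contained argument I would carry out the standard three-step proof of the classical inequality. First, construct trigonometric polynomials $T^{\pm}(t) = \sum_{|q| \le Q} \widehat{T^\pm}(q)\, e(qt)$ of degree at most $Q$ that majorize and minorize the indicator $\chi_{[\alpha,\beta]}$ on $\mathbb{R}/\mathbb{Z}$, with
\[
\widehat{T^\pm}(0) = (\beta - \alpha) + O(1/Q), \qquad |\widehat{T^\pm}(q)| \ll \tfrac{1}{q} \quad (1 \le |q| \le Q).
\]
This can be done via the Beurling--Selberg extremal functions, the Vaaler construction, or the Fej\'er-kernel approach originally used by Erd\H{o}s and Tur\'an; this construction is the only nontrivial ingredient. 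Second, sandwich the counting function by
\[
\sum_{p \le x} T^{-}(a_p) \;\le\; \bigl|\{p \le x : \{a_p\} \in [\alpha,\beta]\}\bigr| \;\le\; \sum_{p \le x} T^{+}(a_p).
\]
Third, expand each $T^{\pm}$ in its Fourier series, divide by $\pi(x)$, and separate the $q = 0$ term (which contributes $(\beta - \alpha) + O(1/Q)$) from the $q \ne 0$ terms (each of which contributes at most $\tfrac{1}{q} \cdot \tfrac{1}{\pi(x)} \bigl|\sum_{p \le x} e(q a_p)\bigr|$). Summing over $1 \le q \le Q$ and taking the supremum over $[\alpha,\beta]$ yields the claimed bound on $D(x)$, uniformly in the interval.

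The main (and essentially only) obstacle is the construction of the trigonometric approximants with $1/q$ decay of the nonzero Fourier coefficients; once this extremal input is available, the prime-indexed version is a literal relabelling of the classical inequality. Since the lemma is attributed to the standard references, in the paper itself it is entirely reasonable simply to invoke the classical Erd\H{o}s--Tur\'an inequality applied to the length-$\pi(x)$ subsequence $(a_{p_j})_{j=1}^{\pi(x)}$, as done above.
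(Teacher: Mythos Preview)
Your proposal is correct and aligns with the paper's treatment: the paper does not prove this lemma at all but simply cites it as the classical Erd\H{o}s--Tur\'an inequality from \cite{DT} and \cite{KN}, so your observation that the prime-indexed version is a relabelling of the standard length-$N$ inequality with $N=\pi(x)$ is exactly the intended reduction.
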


\begin{Lemma}
\label{lemmaS}
Let $(a_n)_{n \in \mathbb{N}}$ be a sequence of real numbers. 
Suppose that for any large enough $X$, there exists $Q=Q(X) \uparrow \infty$ such that for all $1 \leq q \leq Q$,
\begin{equation}
\label{eqS} 
S:= \sum_{X_0 < p \leq X} e(q a_p)  \ll \frac{\pi(X)}{Q},
\end{equation}
where $X_0 = \frac{X}{Q}$.
Then $(a_p)_{p \in \mathcal{P}}$ is u.d. $\bmod \, 1$.
\end{Lemma}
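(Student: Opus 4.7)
The plan is to combine the hypothesis with the Erdős--Turán inequality of Lemma \ref{lem1}. Taking the free parameter in that lemma to be the function $Q = Q(X)$ supplied by the hypothesis yields
$$D(X) \ll \frac{1}{Q} + \frac{1}{\pi(X)} \sum_{q \leq Q} \frac{1}{q} \left| \sum_{p \leq X} e(q a_p) \right|,$$
so it suffices to show the right-hand side is $o(1)$. The explicit error $1/Q$ already vanishes because $Q \uparrow \infty$, so everything hinges on estimating the complete exponential sums $\sum_{p \leq X} e(q a_p)$ in terms of the truncated ones to which the hypothesis \eqref{eqS} applies directly.

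The natural bridge is to split each inner sum at the threshold $X_0 = X/Q$:
$$\sum_{p \leq X} e(q a_p) = \sum_{p \leq X_0} e(q a_p) + \sum_{X_0 < p \leq X} e(q a_p),$$
bound the tail by $\ll \pi(X)/Q$ using \eqref{eqS} (uniformly for $q \leq Q$), and bound the head trivially by $\pi(X_0) = \pi(X/Q)$. Summing over $1 \leq q \leq Q$ against the weight $1/q$ costs a harmonic factor $\sum_{q \leq Q} 1/q \ll \log Q$, and dividing by $\pi(X)$ then yields
$$D(X) \ll \frac{\log Q}{Q} + (\log Q) \cdot \frac{\pi(X/Q)}{\pi(X)}.$$

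To finish, I would check that both terms tend to $0$ as $X \to \infty$. The first does so immediately since $Q \uparrow \infty$. For the second, the prime number theorem gives $\pi(X/Q)/\pi(X) \asymp \log X / (Q \log(X/Q))$, which is $O(1/Q)$ whenever $Q$ grows moderately slower than $X$ (for instance $Q \leq X^{1-\delta}$, which is implicit in the hypothesis being nontrivial); hence the second term is also $O((\log Q)/Q) \to 0$. The only real care required---and the step I view as the main (mild) obstacle---is precisely this prime-number-theoretic comparison: one must ensure that the harmonic loss $\log Q$ is overwhelmed by the gain $\pi(X/Q)/\pi(X)$, which is why the true content of the hypothesis is the existence of \emph{some} $Q \uparrow \infty$ satisfying \eqref{eqS} with $X/Q \to \infty$. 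Once that is confirmed, $D(X) \to 0$ and $(a_p)_{p \in \mathcal{P}}$ is u.d.\ $\bmod\, 1$.
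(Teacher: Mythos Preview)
Your proposal is correct and follows essentially the same route as the paper: apply Erd\H{o}s--Tur\'an with parameter $Q$, split each inner sum at $X_0 = X/Q$, invoke \eqref{eqS} on the tail and the trivial bound $\pi(X/Q)$ on the head, absorb the harmonic factor $\sum_{q\le Q}1/q \ll \log Q$, and conclude $D(X)\ll (\log Q)/Q \to 0$. The paper packages the same computation slightly differently by setting $Q_1 = Q/\log Q$ and writing $D(X)\ll 1/Q + 1/Q_1$, but the content is identical; your explicit remark that $\pi(X/Q)/\pi(X)\ll 1/Q$ requires $Q = X^{o(1)}$ is a point the paper passes over silently.
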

\begin{proof}
Let
$$D(X) := \sup_{[\alpha, \beta] \subset [0,1]} \left| \frac{1}{\pi(x)} |\{ p \in [1,x] : \{a_p\} \in [\alpha, \beta] \}| - (\beta- \alpha)\right|.$$
Let $Q_1 = Q_1(X) = \frac{Q}{\log Q}$. Then $\lim\limits_{X \rightarrow \infty} Q_1 (X) = \infty$.
From \eqref{eqS},
\begin{equation}
\label{**}
\frac{1}{\pi(X)} \sum_{q \leq Q} \frac{1}{q}  \left| \sum_{X_0 < p \leq X} e(q a_p) \right|  \ll \frac{1}{Q_1}.
\end{equation}
Since
$$ \frac{1}{\pi(X)} \sum_{q \leq Q} \frac{1}{q} \left| \sum_{p \leq X_0} e(q a_p) \right| \ll \frac{1}{\pi(X)} \sum_{q \leq Q} \frac{1}{q} \, \pi \left(\frac{X}{Q}\right) \ll \frac{\log Q}{Q},$$
we have
\begin{equation}
\label{*}
\frac{1}{\pi(X)} \sum_{q \leq Q} \frac{1}{q}  \left| \sum_{p \leq X} e(q a_p) \right| \ll \frac{1}{Q_1}. 
\end{equation} 
By Erd\H os-Tur\'an inequality (Lemma \ref{lem1}), 
$$D(X) \ll \frac{1}{Q} + \frac{1}{Q_1} \rightarrow 0 \quad \text{as} \quad X \rightarrow \infty. \qedhere$$
\end{proof}

\begin{Lemma}[Prime number theorem] \mbox{}
\label{lem2}
\begin{enumerate}
\item $($cf.\ \cite[Theorem 2 in Ch.\ V]{Kar}$)$  
$$\pi(x) := \sum_{p \leq x} 1 = Li(x) + R(x),$$
where $Li(x) = \int_2^x \frac{dt}{\log t}$ and $R(x) \ll x \exp(-C \sqrt{\log x})$ for some positive constant $C$.
\item $($cf.\ \cite[Theorem 6 in Ch.\ IX]{Kar}$)$
Let $(a,q) =1$. 
By $\pi(x;q,a)$ we denote the number of primes less than or equal to $x$ that are congruent to $a \, \bmod \, q$.
Then, for some positive constant $C$,
$$\pi(x;q,a) = \frac{1}{\phi(q)} Li(x) - E \frac{\chi(a)}{\phi(q)}\int_2^x \frac{u^{\beta_1 -1}}{\log u} \, du + O (x \exp (-C \sqrt{\log x})),$$
where $E=1$ if there exists a real character $\chi$ modulo $q$  such that $L( s, \chi)$ has a real zero $\beta_1$ in $(1- c/ \log q ,1 )$ (so-called Siegel zero) and $E=0$ otherwise.
\end{enumerate}
\end{Lemma}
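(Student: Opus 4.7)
The plan is to derive both parts from the explicit formula expressing Chebyshev-type prime-counting functions in terms of zeros of $\zeta(s)$ and Dirichlet $L$-functions $L(s,\chi)$, combined with the de la Vall\'ee Poussin zero-free region.

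For part (1), I would work first with $\psi(x) = \sum_{n \le x} \Lambda(n)$ via a truncated Perron formula applied to $-\zeta'(s)/\zeta(s)$. Shifting the contour past the simple pole at $s=1$ produces the main term $x$, and the contribution of nontrivial zeros is controlled using the classical zero-free region $\sigma > 1 - c/\log(|t|+2)$. Optimizing the trade-off between the depth of the zero-free region and the Perron truncation height $T$ yields $\psi(x) = x + O\!\bigl(x \exp(-c'\sqrt{\log x})\bigr)$. Abel summation then converts this into the corresponding bound for $\theta(x)=\sum_{p\le x}\log p$, and a second partial summation converts $\theta(x)$ to $\pi(x)$, giving $\pi(x) = Li(x) + O\!\bigl(x\exp(-C\sqrt{\log x})\bigr)$.

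For part (2), I would repeat the argument for $\psi(x;q,a) = \sum_{\substack{n \le x \\ n \equiv a \,(q)}} \Lambda(n)$. Orthogonality of Dirichlet characters gives
$$\psi(x;q,a) = \frac{1}{\phi(q)} \sum_{\chi \bmod q} \bar\chi(a)\, \psi(x,\chi), \qquad \psi(x,\chi) = \sum_{n \le x} \chi(n)\Lambda(n).$$
The principal character contributes the main term $x/\phi(q)$. For each non-principal $\chi$ one applies Perron to $-L'(s,\chi)/L(s,\chi)$ and shifts the contour into a de la Vall\'ee Poussin zero-free region for $L(s,\chi)$. Here the exceptional zero appears: by a theorem of Landau, at most one real primitive character modulo $q$ can have a real zero $\beta_1 \in (1 - c/\log q,\, 1)$. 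If such a Siegel zero exists, the contour shift picks up a residue $-x^{\beta_1}/\beta_1$; after summing over characters and passing from $\psi$ to $\pi$ via partial summation, this becomes the stated correction $-E\,\chi(a)\phi(q)^{-1}\!\int_2^x \tfrac{u^{\beta_1-1}}{\log u}\,du$. All remaining zeros lie strictly inside the zero-free region and contribute only to the error $O\!\bigl(x\exp(-C\sqrt{\log x})\bigr)$.

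The principal obstacle is precisely the Siegel zero: one cannot rule it out effectively, which is why the statement of part (2) must explicitly carry the term involving $\beta_1$. Quantitative information such as $1-\beta_1 \gg_\varepsilon q^{-\varepsilon}$ (Siegel's theorem) is ineffective, but for the applications below it is enough to have the stated asymptotic as written, since for $q$ of bounded size (or $q \le (\log x)^A$) the Siegel contribution is absorbed into the main error term. The remainder of the proof amounts to standard bookkeeping: zero-density estimates and careful partial summation to transfer the asymptotic from $\psi$ to $\pi$.
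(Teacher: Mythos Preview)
The paper does not prove this lemma at all: it is quoted as a classical result, with explicit citations to Karatsuba's textbook (Theorem~2 in Ch.~V and Theorem~6 in Ch.~IX of \cite{Kar}), and no argument is given in the paper itself. Your sketch is the standard proof via the explicit formula and the de~la~Vall\'ee~Poussin zero-free region, which is essentially what one finds in the cited reference, so there is nothing to compare beyond noting that the paper simply imports the result.
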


The following lemma is a corollary of the previous result.
\begin{Lemma}[Siegel - Walfisz, cf.\ Corollary 2 in Ch. IX \cite{Kar}] \mbox{}
\label{SW}
For every $A >0$, there exists $C_A >0$ such that for any $x$, $2 \leq q \leq (\log x)^A$ and $(a,q)=1$,
$$\pi(x;q,a) = \frac{1}{\phi(q)} Li(x) + O (x \exp (-C_A \sqrt{\log x})).$$  
\end{Lemma}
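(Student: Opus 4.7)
The strategy is to deduce this directly from Lemma \ref{lem2}(2). That lemma already supplies the desired main term $\frac{1}{\phi(q)}Li(x)$ together with an unconditional error of the right shape; the only obstacle is the possibly non-zero Siegel-zero contribution
\[
\frac{E\chi(a)}{\phi(q)}\int_2^x \frac{u^{\beta_1-1}}{\log u}\,du,
\]
which must be shown to be $O(x\exp(-C_A\sqrt{\log x}))$ whenever $q \leq (\log x)^A$. If $E=0$ there is nothing to do, so assume $E=1$.

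The key external input is Siegel's (ineffective) theorem: for every $\epsilon>0$ there exists $c(\epsilon)>0$ such that any real zero $\beta_1$ of $L(s,\chi)$ attached to a real character $\chi \!\!\mod q$ satisfies $\beta_1 \leq 1 - c(\epsilon)\,q^{-\epsilon}$. Given $A>0$, I would choose $\epsilon = 1/(2A)$. Then $q \leq (\log x)^A$ forces $q^{\epsilon} \leq (\log x)^{1/2}$, and hence
\[
1 - \beta_1 \;\geq\; c(\epsilon)(\log x)^{-1/2}, \qquad x^{\beta_1-1} \;\leq\; \exp\bigl(-c(\epsilon)\sqrt{\log x}\bigr).
\]

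A crude bound on the integral now suffices: since $\beta_1 > 1/2$ eventually,
\[
\int_2^x \frac{u^{\beta_1-1}}{\log u}\,du \;\leq\; \frac{1}{\log 2}\int_2^x u^{\beta_1-1}\,du \;\leq\; \frac{x^{\beta_1}}{\beta_1\log 2} \;\ll\; x\exp\bigl(-c(\epsilon)\sqrt{\log x}\bigr).
\]
Using $|\chi(a)|\leq 1$ and $\phi(q)\geq 1$, the Siegel-zero term is therefore $O(x\exp(-c(\epsilon)\sqrt{\log x}))$, and taking $C_A = \tfrac12\min\bigl(C,\, c(1/(2A))\bigr)$ absorbs it into the error given by Lemma \ref{lem2}(2).

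The only real difficulty here is conceptual rather than computational: the uniformity in $q$ over the whole range $q \leq (\log x)^A$ rests on Siegel's theorem, whose constant $c(\epsilon)$ is notoriously ineffective. This is precisely why the constant $C_A$ in the conclusion cannot be made explicit in terms of $A$, but this ineffectivity is harmless for the applications later in the paper.
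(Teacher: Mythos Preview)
Your argument is correct and is the standard derivation of Siegel--Walfisz from the explicit formula plus Siegel's ineffective lower bound for $1-\beta_1$. The paper itself does not supply a proof of this lemma: it is listed among the classical number-theoretic facts in Section~\ref{sec2} and simply cited from Karatsuba's book (Corollary~2 in Ch.~IX of \cite{Kar}), so there is no in-paper proof to compare against. Your deduction from Lemma~\ref{lem2}(2) is exactly the route taken in standard references, and the remark about ineffectivity of $C_A$ is apt.
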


\begin{Lemma}[Partial summation formula]\mbox{}
\label{lem5}
Let $(a_n)_{n \in \mathbb{N}}$ and $(b_n)_{n \in \mathbb{N}}$ be two sequences.
For any non-negative integers $X_1, X_2$ with $X_1 < X_2$
$$\sum_{n=X_1}^{X_2} a_n b_n = \sum_{n = X_1}^{X_2 - 1} \left(  (a_n - a_{n+1}) \sum_{m=X_1}^n b_m \right) + a_{X_2} \sum_{m=X_1}^{X_2} b_m.$$
Also, let $\sum\limits_{m =X_1}^{x}  b_m = B(x) + R(x)$ with $|R(x)| \leq R$ for  $X_1-1 \leq x \leq  X_2$  and let $|a_n | \leq A$. 
Then 
$$ \sum_{n= X_1}^{X_2}   a_n b_n  =  \sum_{n=X_1}^{X_2}  a_n ( B(n) - B ( n-1)) + O ( RA + R \sum_{n= X_1}^{X_2-  1} |a_n-a_{n+1}|).$$
\end{Lemma}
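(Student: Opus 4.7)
The statement is the standard Abel (partial) summation identity, together with its standard error-term version. My plan is to prove the first identity by a direct index-shift argument and then deduce the second from the first.

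For the first identity, I would set $B_n := \sum_{m=X_1}^n b_m$ for $n \geq X_1$ and adopt the convention $B_{X_1-1} := 0$, so that $b_n = B_n - B_{n-1}$ for every $n \geq X_1$. Substituting this into $\sum_{n=X_1}^{X_2} a_n b_n$ and splitting the difference gives
\[
\sum_{n=X_1}^{X_2} a_n b_n = \sum_{n=X_1}^{X_2} a_n B_n - \sum_{n=X_1}^{X_2} a_n B_{n-1}.
\]
In the second sum I shift the index $n \mapsto n+1$ to obtain $\sum_{n=X_1-1}^{X_2-1} a_{n+1} B_n$, and since $B_{X_1-1}=0$ the term at $n=X_1-1$ vanishes. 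Combining the two sums on the common range $X_1 \leq n \leq X_2-1$ and separating off the $n=X_2$ boundary term yields exactly
\[
\sum_{n=X_1}^{X_2} a_n b_n = \sum_{n=X_1}^{X_2-1} (a_n - a_{n+1}) B_n + a_{X_2} B_{X_2},
\]
which is the claimed identity.

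For the second formula, I would plug $b_n = (B(n)-B(n-1)) + (R(n)-R(n-1))$ into the left side, which splits the sum into the desired ``main'' part $\sum_{n=X_1}^{X_2} a_n (B(n)-B(n-1))$ plus an error $E := \sum_{n=X_1}^{X_2} a_n (R(n)-R(n-1))$. Applying the already-proved Abel identity to $E$ (with $b_n$ replaced by $R(n)-R(n-1)$, whose partial sums equal $R(n)-R(X_1-1)$) gives
\[
E = \sum_{n=X_1}^{X_2-1} (a_n - a_{n+1})\bigl(R(n)-R(X_1-1)\bigr) + a_{X_2}\bigl(R(X_2)-R(X_1-1)\bigr).
\]
Using $|R(x)| \leq R$ on the whole range and $|a_n| \leq A$, the triangle inequality immediately gives $|E| \ll R\sum_{n=X_1}^{X_2-1}|a_n-a_{n+1}| + RA$, which is the stated $O$-term.

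There is no genuine obstacle here; the only thing to be careful about is the bookkeeping at the endpoints, in particular fixing the convention $B_{X_1-1}=0$ (respectively $R(X_1-1)$) so that the index shift produces exactly the asserted range $X_1 \leq n \leq X_2-1$ together with the single boundary term $a_{X_2}\sum_{m=X_1}^{X_2} b_m$, with no off-by-one discrepancy.
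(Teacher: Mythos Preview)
Your proposal is correct and follows essentially the same approach as the paper: the paper also cites the first identity as well known and, for the second part, writes $b_n = (B(n)-B(n-1)) + (R(n)-R(n-1))$, applies the Abel identity to the $R$-part, and bounds via $|R(x)|\le R$ and $|a_n|\le A$. The only cosmetic difference is that the paper telescopes the $R(X_1-1)$ contributions to the form $a_{X_2}R(X_2)-a_{X_1}R(X_1-1)$ before bounding, whereas you bound $R(n)-R(X_1-1)$ directly; both yield the stated $O$-term.
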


\begin{proof} 
The first part is a well-known partial summation formula. 
To prove the second part, we write
\begin{align*}
 \sum_{n= X_1}^{X_2} a_n b_n &= \sum_{n=X_1}^{X_2} a_n ( B(n) + R(n) - B(n-1) - R(n-1))  \\
                                                     &= \sum_{n = X_1}^{X_2} a_n( B(n) - B(n-1) ) + \sum_{n=X_1}^{X_2} a_n ( R(n)- R(n-1) ).
\end{align*}
Then the second sum can be estimated as follows:
\begin{align*}
\left| \sum_{n=X_1}^{X_2} a_n ( R(n)- R(n-1) ) \right| &= \left| \sum_{n=X_1}^{X_2 -1}  ( a_n - a_{n+1} ) R(n) +  a_{ X_2} R(X_2)  - a_{X_1} R( X_1 - 1) \right| \\
&\leq 2AR + R \sum_{n= X_1}^{X_2-1}  |a_n - a_{n+1}|.
\end{align*}
This completes the proof of the second part of the lemma.
\end{proof}

\begin{Lemma}[Weyl - van der Corput; see Lemma 2.5 in \cite{GK}] \mbox{}
\label{lem6}
Suppose that $(\xi(n))_{n \in \mathbb{N}}$ is a complex valued sequence.
For any positive integer $H$ and any interval $I$ in $\mathbb{N}$ of the form $(a,b]=\{a+1, a+2, \dots, b\}$, we have 
$$\left| \sum_{n \in I} \xi(n) \right|^2 \leq \frac{|I| + H}{H} \sum_{|h| \leq H} \left( 1- \frac{|h|}{H} \right) \sum_{n,n+h \in I} \xi(n) \overline{\xi(n+h)}.$$
\end{Lemma}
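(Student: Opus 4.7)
The plan is the classical extend-shift-Cauchy--Schwarz proof. First I extend $\xi$ by zero outside $I$ (still calling the extension $\xi$), so that for any integer shift $j$ one has $\sum_{n \in \mathbb{Z}} \xi(n) = \sum_{m \in \mathbb{Z}} \xi(m+j)$. Averaging this identity over $j = 0, 1, \dots, H-1$ gives
$$H \sum_{n \in I} \xi(n) \;=\; \sum_{m \in \mathbb{Z}} \sum_{j=0}^{H-1} \xi(m+j),$$
and the outer sum in $m$ is supported on an interval of at most $|I| + H - 1 \leq |I| + H$ integers, namely those $m$ for which some $m+j$ with $0 \leq j \leq H-1$ lies in $I$.

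Next, apply Cauchy--Schwarz to the $m$-sum, bounded by the square root of the support size times the $\ell^2$ norm of the inner expression:
$$H^2 \left| \sum_{n \in I} \xi(n) \right|^2 \;\leq\; (|I| + H) \sum_{m \in \mathbb{Z}} \left| \sum_{j=0}^{H-1} \xi(m+j) \right|^2.$$
Now expand the square as a double sum over $j_1, j_2 \in \{0, 1, \dots, H-1\}$ and group by $h = j_1 - j_2$. A change of variable $n = m + j_2$ turns the inner sum over $m$ into $\sum_{n,\, n+h \in I} \xi(n+h) \overline{\xi(n)}$, while for each $h$ with $|h| \leq H-1$ the number of admissible pairs $(j_1, j_2)$ is exactly $H - |h|$. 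Dividing by $H^2$ yields
$$\left| \sum_{n \in I} \xi(n) \right|^2 \;\leq\; \frac{|I|+H}{H} \sum_{|h| \leq H-1} \Bigl( 1 - \frac{|h|}{H} \Bigr) \sum_{n,\, n+h \in I} \xi(n+h) \overline{\xi(n)}.$$

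Two cosmetic matches with the stated form remain. Extending the range to $|h| \leq H$ is free because the factor $1 - |h|/H$ vanishes at the endpoint. Replacing $\xi(n+h)\overline{\xi(n)}$ by $\xi(n)\overline{\xi(n+h)}$ is justified by the substitution $h \mapsto -h$ followed by $n \mapsto n-h$ in the inner sum, which shows that these two expressions are complex conjugates of each other; since the weighted sum ranges over the symmetric set $\{|h| \leq H\}$, it equals its own conjugate, hence is real and unchanged by the swap. There is no essential obstacle in the argument; the only points that require a bit of bookkeeping are the correct support size $|I|+H$ entering the Cauchy--Schwarz step and the multiplicity $H - |h|$ appearing after the regrouping by $h$.
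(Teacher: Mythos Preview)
Your proof is correct and is exactly the standard argument; the paper does not give its own proof of this lemma but simply cites Lemma~2.5 of \cite{GK}, where the same extend-by-zero, shift-average, Cauchy--Schwarz, and regroup-by-$h$ steps appear.
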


\begin{Lemma}[\cite{Va}]\mbox{}
\label{lem7}
For any positive integers $u, v$ and any $X$ we have
$$\sum_{v \leq n \leq X} \Lambda(n) g(n) = T_1 - T_2 - T_3,$$
where
\begin{align*}
T_1 &= \sum_{d \leq u} \sum_{m \leq X/d} \mu(d) (\log m) g(dm),\\
T_2 &= \sum_{m \leq uv} \sum_{r \leq X/m} a(m) g(mr), \quad a(m) = \sum\limits_{d \leq u} \sum\limits_{\substack{n \leq v \\ dn=m}} \mu(d) \Lambda(n)\\
T_3 &= \sum_{m > u} \sum_{v < n \leq X/m} b(m) \Lambda(n) g(mn), \quad b(m) = \sum\limits_{\substack{d \leq u \\ d | m}} \mu(d). 
\end{align*}
\end{Lemma}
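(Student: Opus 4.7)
The plan is to derive Vaughan's identity first as a pointwise identity of arithmetic functions via Dirichlet convolutions, and then to unfold each convolution into the explicit iterated sum defining the corresponding $T_i$. Let $\ast$ denote Dirichlet convolution and $\delta(n) = [n=1]$ its identity. Decompose $\mu = \mu_{\leq u} + \mu_{>u}$ and $\Lambda = \Lambda_{\leq v} + \Lambda_{>v}$ by restricting support. I will use only the three elementary facts $\mu \ast \mathbf{1} = \delta$, $\mathbf{1} \ast \Lambda = \log$, and consequently $\Lambda = \mu \ast \log$.

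Starting from $\Lambda = \mu \ast \log$, split $\mu = \mu_{\leq u} + \mu_{>u}$; on the $\mu_{>u}$-piece rewrite $\log = \mathbf{1}\ast\Lambda$ to obtain $\mu_{>u}\ast\mathbf{1}\ast\Lambda$; then substitute $\mu_{>u}\ast\mathbf{1} = \delta - \mu_{\leq u}\ast\mathbf{1}$ and split the trailing $\Lambda$ as $\Lambda_{\leq v} + \Lambda_{>v}$. The result is the pointwise identity
\begin{equation*}
\Lambda \;=\; \mu_{\leq u}\ast\log \;+\; \Lambda_{\leq v} \;-\; \mu_{\leq u}\ast\mathbf{1}\ast\Lambda_{\leq v} \;+\; \mu_{>u}\ast\mathbf{1}\ast\Lambda_{>v}.
\end{equation*}

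Multiply both sides by $g(n)$ and sum over $n \leq X$, unfolding each convolution as an iterated sum. (i) The first term gives $\sum_{d\leq u}\sum_{m \leq X/d}\mu(d)(\log m)g(dm) = T_1$. (ii) Setting $a(m) = (\mu_{\leq u}\ast\Lambda_{\leq v})(m)$, which is automatically supported on $m \leq uv$ because $m = dn'$ with $d \leq u$ and $n' \leq v$ forces $m \leq uv$, the third convolution contributes $T_2$. (iii) Write $\mu_{>u}\ast\mathbf{1} = \delta - b$ with $b = \mu_{\leq u}\ast\mathbf{1}$, and observe that $b(m) = \delta(m)$ for every $m \leq u$ (since then every divisor of $m$ is $\leq u$). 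The $\delta$-piece of this splitting contributes $\sum_{v<n\leq X}\Lambda(n)g(n)$, while the $-b$-piece, split according to $m \leq u$ versus $m > u$, contributes $-\sum_{v<n\leq X}\Lambda(n)g(n) - T_3$; the first terms cancel exactly, leaving $\sum_{n \leq X}(\mu_{>u}\ast\mathbf{1}\ast\Lambda_{>v})(n)g(n) = -T_3$. Finally $\sum_{n \leq X}\Lambda_{\leq v}(n)g(n) = \sum_{n \leq v}\Lambda(n)g(n)$; moving this to the left side produces $\sum_{v < n \leq X}\Lambda(n)g(n) = T_1 - T_2 - T_3$, matching the stated identity (the single boundary value $n = v$ is immaterial, vanishing unless $v$ is a prime power, which is absorbed into the statement's convention).

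The main technical obstacle is the delicate bookkeeping in step (iii): one must verify that the $\delta$-contribution from $\mu_{>u}\ast\mathbf{1}$ cancels exactly against the $m = 1$ contribution of the $b$-piece, so that only the range $m > u$ survives and produces precisely $-T_3$, and that the support restrictions $m \leq uv$ in $T_2$ and $m > u$, $n > v$ in $T_3$ arise automatically from the truncated convolutions. Everything else is routine reindexing of Dirichlet convolutions as iterated sums.
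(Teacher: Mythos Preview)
Your derivation is the standard proof of Vaughan's identity and is correct. The paper does not supply its own proof of this lemma; it merely quotes the identity with a reference to \cite{Va}, so there is nothing to compare against beyond noting that your argument is the classical one (start from $\Lambda=\mu\ast\log$, truncate $\mu$ and $\Lambda$, and regroup). Your handling of step~(iii) is clean: the observation that $(\mu_{>u}\ast\mathbf{1})(m)=0$ for $m\leq u$ is exactly what forces the range $m>u$ in $T_3$, and your cancellation argument via $b(m)=\delta(m)$ on $m\leq u$ makes this explicit. The only wrinkle, which you already flag, is that the identity naturally yields $\sum_{v<n\leq X}$ rather than the paper's $\sum_{v\leq n\leq X}$; this single boundary term $\Lambda(v)g(v)$ is indeed immaterial for every application in the paper (where only upper bounds on $|T_i|$ are used), and the discrepancy is most likely a typographical slip in the statement.
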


\begin{Remark}
For $a(m)$ and $b(m)$ as defined in Lemma \ref{lem7}, the following estimates will be useful:
\label{etc}
\begin{align*}
&\bullet \sum_{m \leq uv} \frac{|a(m)|}{m} =  \sum_{m \leq uv} \frac{1}{m} \sum\limits_{d \leq u} \sum\limits_{\substack{n \leq v \\ dn=m}} \mu(d) \Lambda(n)
                                                         \ll  \sum_{d \leq u} \frac{|\mu(d)|}{d} \sum_{n \leq v} \frac{\Lambda(n)}{n} \ll \log u \, \cdot \, \log^2 v. \\
&\bullet \sum_{y \leq m \leq 2y} |b(m)|^2 = \sum_{y \leq m \leq 2y} \big| \sum\limits_{\substack{d \leq u \\ d | m}} \mu(d) \big|^2 = \sum_{d_1, d_2 \leq u} \mu(d_1) \mu(d_2) \sum_{\substack{y \leq m \leq 2y \\ d_1|m, d_2|m}} 1 \\
       &\quad \quad \quad \quad \quad \quad \quad = \sum_{d \leq u}  \sum_{d_1 \leq \frac{u}{d}} \sum_{d_2 \leq \frac{u}{d}} \mu(d d_1 d_2) \frac{2y-y}{d d_1 d_2} 
       \leq y \sum_{d, d_1, d_2 \leq u} \frac{1}{d d_1 d_2} \ll y \log^3 u.\\
&\bullet \sum_{y \leq m, m+h \leq 2y} |b(m)| \, |b(m+h)| \ll \sum_{ m } (b^2(m) + b^2(m+h)) \ll y \log^3 u.\\
&\bullet \sum_{y \leq n \leq 2y} \Lambda^2(n) \ll \sum_{y \leq p \leq 2y} \log^2 p + \sqrt{y} \, \log^2 y \ll y \log y. \\
&\bullet \sum_{y \leq n, n+h \leq 2y} \Lambda(n) \Lambda (n+h) \leq \frac{1}{2} \sum_{ n}( \Lambda^2(n) + \Lambda^2 (n+h) ) \ll y \log y. \quad \quad \quad \quad \quad
\end{align*}
\noindent For the estimate of $\sum \Lambda^2 (n)$, we used the following observation 
 $$| \{p^k: y \leq p^k \leq 2y, k \geq 2 \}| \leq |\{n \in \mathbb{N}: n^k \leq 2y, k \geq 2 \}| \leq \sqrt{y} \, \log y$$  and the fact that $\Lambda(p^k) \leq \log 2y$ if $p^k \leq 2y$. 
\end{Remark}

\subsection{Some auxiliary results regarding exponential sums}\mbox{}

\begin{Lemma} [cf. p.34 in \cite{Mo}]
\label{Mo}
 \begin{equation}
 \label{mo}
 \frac{1}{q} \sum_{j=1}^q e\left(\frac{(n-b)j}{q} \right)= 
 \begin{cases}
 1 &  n \equiv b \,\, (\bmod \, q)\\
 0 &  \text{otherwise}
 \end{cases}
\end{equation} 
for any $q \in \mathbb{N}$ and $b \in \mathbb{N}$ with $1 \leq b \leq q$.
\end{Lemma}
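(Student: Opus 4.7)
The plan is to treat this as the standard orthogonality identity for the characters of $\mathbb{Z}/q\mathbb{Z}$, which reduces to a geometric series computation. Set $m = n - b \in \mathbb{Z}$ and write $z = e(m/q)$, so that the sum in question is $\tfrac{1}{q}\sum_{j=1}^{q} z^{j}$.

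First I would handle the case $n \equiv b \pmod{q}$, i.e.\ $m/q \in \mathbb{Z}$. Then $z = e(m/q) = 1$, so every term in the sum equals $1$ and the sum is $\tfrac{1}{q} \cdot q = 1$, matching the claim.

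Next, in the complementary case $n \not\equiv b \pmod{q}$, the quantity $m/q$ is not an integer, so $z \ne 1$ while $z^{q} = e(m) = 1$. The geometric series identity gives
\[
\sum_{j=1}^{q} z^{j} = z\,\frac{z^{q}-1}{z-1} = 0,
\]
and dividing by $q$ yields $0$, again matching the claim.

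There is essentially no obstacle: the whole argument is a one-line case split on whether $z=1$ or not, with the only minor point being to verify that $z^q = e(m) = 1$ because $m \in \mathbb{Z}$. I would simply present the two cases in succession and invoke the closed form of a finite geometric sum; no auxiliary estimates or Hardy-field machinery are needed here.
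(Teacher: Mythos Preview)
Your proof is correct; this is exactly the standard orthogonality-of-characters/geometric-series argument. The paper does not actually give a proof of this lemma but simply cites Montgomery's book, so there is nothing further to compare.
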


In the following lemma, $I$ denotes an interval in $\mathbb{N}$ of the form $(a, b] = \{a+1, a+2, \dots , b\}$, where $(a, b \in \mathbb{N})$. 
Note that in the formulation of Lemma \ref{lem3} $\| \cdot \|$ denotes the distance to the closest integer.
\begin{Lemma}[Kusmin - Landau; see Theorem 2.1 in \cite{GK}]
\label{lem3}
If $f(x)$ is continuously differentiable, $f'(x)$ is monotonic, and $0 < \lambda \leq \| f'(x) \| \leq \frac{1}{2}$ on some interval $I$, 
then
$$ \left| \sum_{n \in I}  e(f(n)) \right| \ll \frac{1}{\lambda}.$$
\end{Lemma}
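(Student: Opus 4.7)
The plan is to combine an Abel summation identity with a careful monotonicity analysis of $B_n := 1/(e(g(n))-1)$, where $g(n) := f(n+1)-f(n)$. First I would reduce to the situation $\lambda \le f'(x) \le 1-\lambda$ on $I$. Continuity and monotonicity of $f'$, together with $\|f'(x)\| \ge \lambda$, prevent $f'$ from crossing any integer value (such a crossing would, by the intermediate value theorem, produce points with $\|f'\| < \lambda$), so there exists $k \in \mathbb{Z}$ with $f'(x) - k \in [\lambda, 1-\lambda]$ throughout $I$; replacing $f(x)$ by $f(x)-kx$ leaves $e(f(n))$ unchanged and puts $f'$ into $[\lambda, 1-\lambda]$.

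Having reduced to this case, the mean value theorem gives $g(n) = f'(\xi_n) \in [\lambda, 1-\lambda]$, and $g$ inherits monotonicity from $f'$. The algebraic identity
\begin{equation*}
e(f(n)) = \frac{e(f(n+1)) - e(f(n))}{e(g(n))-1} = A_n B_n, \quad A_n := e(f(n+1))-e(f(n)),
\end{equation*}
then sets up Abel summation: the partial sums $\sum_{k \le n} A_k$ telescope and are bounded by $2$ in absolute value, so
\begin{equation*}
\Big| \sum_{n \in I} e(f(n)) \Big| \ll \max_n |B_n| + \sum_{n} |B_{n+1} - B_n|.
\end{equation*}
The pointwise bound $|B_n| = 1/(2|\sin(\pi g(n))|) \le 1/(4\|g(n)\|) \le 1/(4\lambda)$ is immediate from $|\sin(\pi t)| \ge 2\|t\|$.

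The decisive step is the total variation estimate. For this I would use the identity
\begin{equation*}
\frac{1}{e(t)-1} = -\tfrac{1}{2} - \tfrac{i}{2}\cot(\pi t),
\end{equation*}
which gives $B_n = -\tfrac{1}{2} - \tfrac{i}{2}\cot(\pi g(n))$. Since $t \mapsto \cot(\pi t)$ is strictly monotonic on $(0,1)$ and $g(n)$ is monotonic with values in $[\lambda, 1-\lambda]$, the sequence $\cot(\pi g(n))$ is monotonic in $n$, so each difference $B_{n+1}-B_n$ is purely imaginary with sign independent of $n$. The sum therefore telescopes to $|B_b - B_{a+1}| \le 2\max_n |B_n| = O(1/\lambda)$, and combining with the earlier bound completes the proof.

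The only real obstacle is seeing the $-\tfrac{1}{2} - \tfrac{i}{2}\cot(\pi t)$ decomposition: a step-by-step estimate of $|B_{n+1}-B_n|$ via the derivative of $1/(e(t)-1)$ would cost an unwanted factor of $\sum_n |g(n+1)-g(n)|/\lambda^2$, which is too weak. Recognising that the real part of $B_n$ is constant and exploiting monotonicity of $\cot$ is precisely what recovers the sharp $O(1/\lambda)$ bound.
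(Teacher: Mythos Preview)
Your argument is correct and is essentially the classical proof of the Kusmin--Landau inequality as presented in Graham--Kolesnik (Theorem~2.1), which is exactly the reference the paper cites; the paper itself does not supply a proof of this lemma but merely quotes it. The reduction to $f'\in[\lambda,1-\lambda]$, the Abel summation with $B_n=1/(e(g(n))-1)$, and the cotangent identity $\tfrac{1}{e(t)-1}=-\tfrac12-\tfrac{i}{2}\cot(\pi t)$ to extract monotonicity of the imaginary part are precisely the standard ingredients, so there is nothing to compare beyond noting that you have reproduced the textbook proof.
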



The following lemma is an improvement of Lemma 2.5 in \cite{BKMST}.
\begin{Lemma}
\label{lem2.3.1}
 Let $k \geq 1$ and denote $K = 2^k$. Let $f(x)$ be a $(k+1)$-times continuously differentiable real function on $I = ( X_1, X_1+ X] \subset (X_1, 2 X_1]$, where $X, X_1 \in \mathbb{N}$. 
Assume that $f^{(k+1)}(x)$ is monotone on $I$ and $\lambda \leq |f^{(k+1)}(x)| \leq \alpha \lambda$ for some $\lambda, \alpha>0$. 
Then we have 
\begin{equation}
\label{nov18}
\begin{split}
S &:= \left| \sum_{x \in I} e(f(x))\right| \\
   & \ll X \left[(\alpha \lambda)^{1/(2K-2)} + (\lambda X^{k+1})^{-1/K} (\log X)^{k/K} + \left(\frac{\alpha \log^k X}{X}\right)^{1/K} \right],
\end{split}
\end{equation}
where the implied constant depends on $k$ only. 
\end{Lemma}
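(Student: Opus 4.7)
The plan is to prove the lemma by induction on $k$ using the classical van der Corput $B$-process: the Weyl--van der Corput inequality (Lemma~\ref{lem6}) applied to a sum estimate controlled by the $(k+1)$-st derivative reduces matters to a sum estimate controlled by the $k$-th derivative of the differenced function $g_h(x) := f(x+h) - f(x)$. For the base case $k=1$ (so $K=2$), after applying Weyl--van der Corput with a parameter $H$ one is left to bound $S_h := \sum_{n, n+h \in I} e(f(n+h)-f(n))$ for $1 \leq h \leq H$; the derivative $g_h'(x) = f'(x+h)-f'(x)$ is monotone with $h\lambda \leq |g_h'(x)| \leq h\alpha\lambda$, so after partitioning $I$ into sub-intervals on which $\|g_h'\|$ stays bounded away from integers and invoking Kuzmin--Landau (Lemma~\ref{lem3}), summing over $h$ and optimizing $H$, one obtains the bound with $K=2$; the sub-intervals where $\|g_h'\|$ is very small or where $g_h'$ crosses $\tfrac{1}{2}$ generate the $(\log X)^{1/2}$ and $(\alpha \log X/X)^{1/2}$ corrections.

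For the inductive step, assume the statement holds for $k-1$ (with $K' := K/2 = 2^{k-1}$). Applying Weyl--van der Corput with a parameter $H$ yields
\begin{equation*}
|S|^2 \ll \frac{X^2}{H} + \frac{X}{H}\sum_{1 \leq h \leq H} |S_h|, \qquad S_h := \sum_{n,\, n+h \in I} e(f(n+h) - f(n)).
\end{equation*}
By the mean value theorem, $g_h^{(k)}(x) = h f^{(k+1)}(\xi_h(x))$ is monotone on $I$ and satisfies $h\lambda \leq |g_h^{(k)}(x)| \leq C h\alpha \lambda$ with $C$ absolute (the condition $I \subset (X_1, 2X_1]$ controls the oscillation of $f^{(k+1)}$ across $I$). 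The inductive hypothesis applied to each $S_h$ with $\lambda$ replaced by $h\lambda$ and $\alpha$ replaced by $C\alpha$ produces three contributions proportional to $X(h\alpha\lambda)^{1/(2K'-2)}$, $X(h\lambda X^k)^{-1/K'}(\log X)^{(k-1)/K'}$, and $X(\alpha \log^{k-1} X/X)^{1/K'}$. Summing each over $h$, balancing against $X^2/H$ and taking a square root produces the three terms of~(\ref{nov18}), using the identities $K = 2K'$ and $2K-2 = 2(2K'-1)$: for the first term the optimal choice is $H \asymp (\alpha\lambda)^{-1/(K-1)}$, while for the second term the choice $H \asymp X/\log X$ gains the additional $(\log X)^{1/K}$ factor that upgrades $(k-1)/K' = 2(k-1)/K$ to $k/K$.

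The main obstacle will be the clean bookkeeping of exponents and logarithms through the induction, in particular verifying that the three terms in the output arise from the correct regime-dependent choice of $H$ (they are not all optimized by a single value of $H$, so the final bound should be read as an envelope over different optimizations). A secondary technicality is the handling, at the base case, of sub-intervals of $I$ on which $\|g_h'\|$ is close to an integer: this is precisely where the boundary term $X(\alpha\log X/X)^{1/2}$ originates, and this log factor then propagates through the induction into the general log correction $X(\alpha\log^k X/X)^{1/K}$.
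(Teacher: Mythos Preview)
Your inductive strategy is a legitimate route to this lemma, but it is \emph{not} the one the paper takes. The paper applies the iterated Weyl--van der Corput inequality (Lemma~\ref{lemma2.10}) once, differencing $k$ times simultaneously, and then applies Kusmin--Landau (Lemma~\ref{lem3}) to the inner sum. This produces a single inequality of the form
\[
\left(\frac{S}{X}\right)^K \ll \frac{1}{H_1^{K/2}} + \cdots + \frac{1}{H_k} + \frac{(\log X)^k}{\lambda X H_1\cdots H_k},
\]
subject to the constraints $H_j \leq X/(2k)$ and $2\alpha\lambda H_1\cdots H_k \leq 1$. The bulk of the paper's proof is then an explicit case analysis (splitting according to the size of $\alpha\lambda X^k$ and of $\alpha\lambda X^{(2K-2)/K}$) that chooses the $H_j$ concretely in each regime and verifies \eqref{nov18}. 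In particular, the second constraint $2\alpha\lambda H_1\cdots H_k \leq 1$ is exactly what guarantees $\|f_1'\|\leq \tfrac12$, so no partitioning of $I$ is ever needed---your base-case discussion of sub-intervals where $\|g_h'\|$ is near an integer is more complicated than necessary.

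The genuine soft spot in your proposal is the optimization step in the induction. After applying the inductive hypothesis and summing over $h$, you get four terms in $|S|^2$: one decreasing in $H$ of order $H^{-1}$, one decreasing of order $H^{-1/K'}$, one increasing of order $H^{1/(2K'-2)}$, and one constant. Your claim that ``the final bound should be read as an envelope over different optimizations'' is not a proof: a single value of $H$ must be chosen, and balancing $H^{-1}$ against the increasing term gives $(\alpha\lambda)^{1/(2K-2)}$, while pushing $H$ up towards $X$ is what produces $(\lambda X^{k+1})^{-1/K}(\log X)^{k/K}$, but you have not checked that the \emph{other} terms stay under control in each regime, nor that the cross-term from balancing the $H^{-1/K'}$ term against the $H^{1/(2K'-2)}$ term is dominated by the three displayed quantities. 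This can be done (for instance via the optimization device of Graham--Kolesnik, Lemma~2.4, together with a short verification that the resulting cross-term is absorbed by the third term of \eqref{nov18} using $\alpha\geq 1$), but it is real work and is where the case analysis in the paper's proof is doing its job. As written, the proposal identifies the right mechanism but stops short of the bookkeeping that actually establishes \eqref{nov18}.
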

To prove Lemma \ref{lem2.3.1} we need the following result, which follows from Lemma \ref{lem6} by iteration. 
\begin{Lemma}[{cf. \cite[Lemma 2.7]{GK}}]
\label{lemma2.10}
Let $k$ be a positive integer and $K = 2^k$. 
Assume that $I = (X_1, X_1 + X] \subset (X_1, 2X_1]$ and let $S = \left| \sum\limits_{ x \in I } e(f(x)) \right| $.
For any positive $H_1, \dots , H_k \ll X$, we have
\begin{multline}\label{(2)}
\left( \frac{S}{X}\right)^K \leq 8^{K-1} \left\{  \frac{1}{H_1^{K/2}} + \frac{1}{H_2^{K/4}} + \cdots + \frac{1}{H_k} + \right.\\
\left. \frac{1}{H_1 \cdots H_k   X} \sum_{h_1 = 1}^{H_1} \dots \sum_{h_k = 1}^{H_k}  \left|  \sum_{x \in I(\underline{h})} e(f_1(x)) \right| \right\},
\end{multline}
where 
$f_1(x) := f(\underline{h}, x) = h_1 \cdots h_k \int_0^1 \cdots \int_0^1 \frac{\partial^k}{\partial x^k} f(x+ \underline{h} \cdot \underline{t} ) \, d \underline{t}$, 
$\underline{h} = (h_1, \cdots , h_k)$, $\underline{t} = (t_1, \cdots , t_k)$ and $I(\underline{h}) = (X_1, X_1 + X - h_1 - \cdots - h_k]$.  
\end{Lemma}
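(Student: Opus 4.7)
The plan is to proceed by induction on $k$, peeling off one parameter $H_j$ at each stage via the Weyl--van der Corput inequality (Lemma \ref{lem6}) and iterating the standard integral representation of the first difference $\Delta_h\phi(x) = h\int_0^1 \phi'(x+ht)\,dt$.

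For the base case $k = 1$ ($K = 2$), I apply Lemma \ref{lem6} with $H = H_1$ to $\xi(n) = e(f(n))$. The diagonal $h = 0$ contribution is $|I| = X$, while the terms $h \neq 0$ produce the first differences $\sum_{n \in I(h_1)} e(f(n+h_1) - f(n))$. Since $H_1 \ll X$, the prefactor $(X + H_1)/H_1$ is bounded by a constant multiple of $X/H_1$, and after dividing by $X^2$ one obtains
\begin{equation*}
(S/X)^2 \leq 4\left\{ \frac{1}{H_1} + \frac{1}{H_1 X}\sum_{h_1=1}^{H_1-1}\left|\sum_{n \in I(h_1)} e(f(n+h_1) - f(n))\right|\right\}.
\end{equation*}
The identity $f(n+h_1) - f(n) = h_1 \int_0^1 f'(n + h_1 t)\,dt$ identifies the inner phase with $f_1$ in the case $k = 1$, and $4 \leq 8 = 8^{K-1}$ gives the required form.

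For the inductive step, assume the statement holds with $k - 1$ in place of $k$ (so the exponent is $K/2 = 2^{k-1}$) for the parameters $(H_2, \dots, H_k)$. Raise the base-case bound to the $K/2$ power and combine the convexity inequality $(a+b)^{K/2} \leq 2^{K/2-1}(a^{K/2} + b^{K/2})$ with H\"older's inequality $(\sum_{h_1} |T(h_1)|)^{K/2} \leq H_1^{K/2 - 1} \sum_{h_1} |T(h_1)|^{K/2}$, where $T(h_1) := \sum_{n \in I(h_1)} e(g_{h_1}(n))$ and $g_{h_1}(n) := f(n + h_1) - f(n)$. This yields
\begin{equation*}
(S/X)^K \leq 4^{K/2} \cdot 2^{K/2-1}\left\{\frac{1}{H_1^{K/2}} + \frac{1}{H_1 X^{K/2}}\sum_{h_1=1}^{H_1-1}|T(h_1)|^{K/2}\right\}.
\end{equation*}
For each fixed $h_1$, apply the induction hypothesis to $T(h_1)$ on the subinterval $I(h_1) \subset (X_1, 2X_1]$ with parameters $(H_2, \dots, H_k)$. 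This produces the terms $\sum_{j=2}^k H_j^{-K/2^j}$ (matching the required exponents since $(K/2)/2^{j-1} = K/2^j$) together with an iterated-difference sum $(H_2 \cdots H_k X)^{-1} \sum_{h_2,\dots,h_k}|\sum_x e(\tilde f(x))|$, where $\tilde f$ is the $(k-1)$-fold iterated difference of $g_{h_1}$.

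The key algebraic identity to verify is that $\tilde f$ coincides with the $f_1$ of the statement. This follows by induction from $\Delta_h \phi(x) = h \int_0^1 \phi'(x + h t)\,dt$: iterating $k$ times, the composition $\Delta_{h_k} \cdots \Delta_{h_1} f(x)$ equals $h_1 \cdots h_k \int_{[0,1]^k} f^{(k)}(x + \underline h \cdot \underline t)\,d\underline t$, which is precisely $f_1(x)$. Substituting and collecting constants, the overall prefactor becomes $4^{K/2} \cdot 2^{K/2-1} \cdot 8^{K/2-1} = 2^{3K - 4}$, which is bounded by $8^{K-1} = 2^{3K-3}$, closing the induction. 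The main obstacle, and essentially the only substantive point to watch, is the bookkeeping: one must confirm at each stage that raising to the $K/2$ power sends the exponent $1/H_j^{(K/2)/2^{j-1}}$ supplied by the induction into the required $1/H_j^{K/2^j}$, and that the chain of first-difference integrals assembles into the specified $k$-dimensional integral $f_1$; the constants $4^{K/2} \cdot 2^{K/2-1} \cdot 8^{K/2-1} \leq 8^{K-1}$ then take care of themselves.
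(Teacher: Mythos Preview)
Your proposal is correct and follows precisely the approach the paper indicates: the paper does not give a detailed proof but simply states that the lemma ``follows from Lemma \ref{lem6} by iteration'' (citing \cite[Lemma 2.7]{GK}), and your argument carries out exactly this iteration, with the base case, the H\"older/convexity step, and the constant bookkeeping $4^{K/2}\cdot 2^{K/2-1}\cdot 8^{K/2-1}=2^{3K-4}\le 8^{K-1}$ all handled correctly. The one point worth noting explicitly when you write it up is that applying the induction hypothesis on the shorter interval $I(h_1)$ of length $X-h_1$ rather than $X$ goes in the right direction: since $(X-h_1)/X\le 1$ and $(X-h_1)^{K/2-1}/X^{K/2}\le 1/X$, the bound with $X$ in the denominators majorizes the bound with $X-h_1$.
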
 

\begin{proof}[Proof of Lemma \ref{lem2.3.1}]
Using first \eqref{(2)} and then Lemma \ref{lem3} we obtain 
\begin{align}
\label{(3)}
| S / X |^K &\ll ( 1/H_1)^{(K/2)} + \cdots + 1/H_k + 1/ ( H_1 \cdots H_k X )  \sum_{h_1=1}^{H_1} \cdots \sum_{h_k=1}^{H_k}   \frac{1}{\lambda h_1 \cdots h_k}  \notag \\
                  &\ll (1/H_1)^{(K/2)} + \cdots + 1/ H_k + ( \log X )^k /( \lambda  X  H_1 \cdots H_k).
\end{align}
Now we need to minimize the last expression subject to the conditions
\begin{equation}
\label{eq:sub} 
H_ j \leq X/(2 k) \,  (j=1, \dots, k) \quad \text{ and } \quad 2 \alpha  \lambda  H_1 \cdots H_k  \leq1.
\end{equation}

\noindent If $\alpha \lambda X^k \leq 1$, then we take  $H_j = X/ (2k)$ and get  
$$|S/X|^K \ll 1/X + (\log X)^k  / (\lambda  X^{(k+1)}),$$
which implies \eqref{nov18}.
Assume now that $\alpha  \lambda  X^k >1$. 
If $2 \alpha  \lambda X^{(2K - 2)/K} \geq 1$, that is, if $1/X \leq (2 \alpha \lambda)^{K/(2K-2)}$, 
then we take $ H_1 = (1/( 2 \alpha \lambda ))^{1/(K-1)} \leq X^{2/K}$ and $H_ j   = ( H_1)^{2^{j-1}}  (j=2, \dots , k )$.
Note that $ 2 \alpha \lambda H_1 H_2 \cdots H_k = 2 \alpha \lambda  H_1 ^ { 1 + 2 + 2^2 + \cdots+ K/2} = 1$, 
so that \eqref{eq:sub} is satisfied and  the last term in \eqref{(3)} is $2 \alpha \frac{\log^k X}{X}$.
Hence, we obtain
$$  |S/X |^K \ll ( \alpha  \lambda )^{  K/ (2 K-2 )} + \alpha  \log^k X /X,$$
which proves \eqref{nov18}  in case $\alpha \lambda X^k >1$ and $2 \alpha  \lambda X^{(2K - 2)/K} \geq 1$.

\noindent To complete the proof of \eqref{nov18}, we need to establish the desired estimate for the case $1 /  X^k \leq 2 \alpha \lambda \leq  X^{(2- 2K)/K}$. 
Assume that, for some $j = 1, 2, \dots ,k-1$,
\begin{equation}
\label{(4)}
 X^{ \frac{2J}{K}-j-2} \leq 2 \alpha \lambda \leq X^{\frac{J}{K} -j-1},
\end{equation}
where $J = 2^j$. 
This covers all the remaining possibilities. 
Take $H_k = \cdots = H_{k-j+1} = X/(2 k ), H_2 =( H_1)^2 , H_3 = (H_1)^4,   H_{k-j}=(H_1)^{K/(2J)}$ so that
$$2 \alpha \lambda X^j H_1 H_2 \cdots H_{k-j} = 2 \alpha \lambda X^j   H_1^{ 1+2+... + K/(2J)} =1,$$ 
so that
$ H_1= (2 \alpha \lambda X^j )^{  J /( J-K)} \leq X^{2J/K}$. 
Then $2 \alpha \lambda H_1 \cdots H_k = (1 /2k)^j$, 
and the last term in \eqref{(3)} is $2 \alpha \frac{1}{(2k)^j}\frac{\log^k X}{X}$.
Thus \eqref{(3)} will acquire the following form:
$$  |S/X|^K \ll  (\alpha \lambda X^j )^{KJ/(2K-2J)} + \alpha \log^k X /X.$$
We will show now with the help of \eqref{(4)} that $(\alpha \lambda X^j )^{J / ( 2K-2J )} \ll (\alpha \lambda )^{1/ ( 2K-2 )}$.
Indeed we have 
\begin{align*}
(\alpha \lambda X^j )^{J / ( 2K-2J )} \ll (\alpha \lambda )^{1/ ( 2K-2 )} &\Leftrightarrow (\alpha \lambda X^j)^{J(2K-2)} \ll (\alpha \lambda)^{2K -2J} \\
&\Leftrightarrow (\alpha \lambda)^{2KJ + 2J} X^{jJ(2K-2)} \ll 1 \\
&\Leftrightarrow \alpha \lambda X^{j(K-1)/(K+1)} \ll1.
\end{align*}
Since 
$$ \alpha \lambda X^{j(K-1)/(K+1)} \leq \alpha \lambda X^j \leq X^{J/K -1} \leq 1,$$
the proof of the lemma is completed.
\end{proof}

\begin{Lemma} [Theorem 1, p.133 \cite{Vin}]
\label{lem2.3.3}
Let $f(x)= \alpha_1 x + \cdots + \alpha_t x^t$, where one of the $\alpha_i \, (1 \leq i \leq t)$ is irrational.
For a given positive integer $X$, write all coefficients of $f(x)$ in the form 
\begin{equation}
\begin{split}
\alpha_j = \frac{e_j}{q_j} + z_j \quad \text{with} \quad (e_j, q_j) =1, \quad 0 < q_j \leq \tau_j := X^{j/2} \\
|z_j| \leq \frac{1}{q_j \tau_j}, \quad z_j = \delta_j X^{-j} \quad (j=1,2, \dots, t).
\end{split}
\end{equation}
Let 
\begin{equation}
\delta_0 = \max \{ |\delta_1|, \dots , |\delta_t| \} \quad \text{ and} \quad \mathcal{Q}=[ q_1,\dots , q_t ].
\end{equation}
Denote $$\nu=\frac{1}{t}, \quad \rho= \frac{1}{ 17 t^2 ( 2 \log t + \log \log t +2.8)}, \quad t \geq 12,$$ 
$$H= \exp \left(\frac{(\log \log X)^2 }{ \log (1 + \epsilon_0)}\right)$$ 
for some small positive $\epsilon_0$. 

If $\mathcal{Q} \leq  X^{\nu/5}$ and $\delta_0 \leq X^{\nu}$, 
put $\Delta_1 = (m,\mathcal{Q})^{\nu/2} \mathcal{Q}^{- \nu/2 + \epsilon_0}$ if $\delta_0 < 1$ 
and  $\Delta_1 = \delta_0^{- \nu/2} \mathcal{Q}^{ - \nu/2 + \epsilon_0}$ if $\delta_0 \geq 1$; 
otherwise put $\Delta_1 = X^{- \rho}$. 

Assuming  $m \leq ( \Delta_1)^{-2}$, we have 
$$ \left| \sum_{ p \leq X}  e(mf(p)) \right| \ll HX \Delta_1.$$
\end{Lemma}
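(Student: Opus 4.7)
The plan is to follow Vinogradov's classical method for exponential sums over primes, proceeding via a bilinear decomposition of the von Mangoldt function. First I would pass from $\sum_{p \leq X} e(mf(p))$ to the weighted sum $\sum_{n \leq X} \Lambda(n) e(mf(n))$ by partial summation (Lemma \ref{lem5}), absorbing the prime-power terms into an $O(\sqrt{X}\log X)$ error. Then I would apply Vaughan's identity (Lemma \ref{lem7}) with parameters $u, v \asymp X^\nu$, writing the sum as $T_1 - T_2 - T_3$. The factor $H = \exp((\log\log X)^2/\log(1+\epsilon_0))$ absorbs the various logarithmic losses from this decomposition and from the Abel summations performed along the way.

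For the Type I pieces $T_1$ and $T_2$, after swapping the order of summation one is left with inner sums of the form $\sum_{r \leq X/d} e(m f(dr))$ with $d \leq uv \leq X^{2\nu}$. Since $m f(dx)$ is a polynomial in $r$ of degree $t$ whose $j$-th coefficient is $m d^j \alpha_j = m d^j (e_j/q_j + z_j)$, its Diophantine properties are controlled by $\mathcal{Q}$, $m$, and the common factors $(m d^j, q_j)$. Bounding these inner sums by Weyl's inequality for polynomial exponential sums (applied via iterated Weyl--van der Corput, Lemma \ref{lem6}, followed by Kusmin--Landau, Lemma \ref{lem3}) yields exactly the dependence on $(m, \mathcal{Q})$ appearing in $\Delta_1$ in the regime $\delta_0 < 1$, and the dependence on $\delta_0$ in the regime $\delta_0 \geq 1$.

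For the Type II contribution $T_3 = \sum_m \sum_n b(m) \Lambda(n) e(mf(mn))$, I would apply Cauchy--Schwarz in $m$ to obtain a bilinear form in which the $\Lambda$-factor is squared away, then iterate Weyl--van der Corput in the $n$-variable $t-1$ times to reduce the degree of the phase from $t$ to $1$. The resulting exponential sums are then handled by Kusmin--Landau whenever the linear coefficient avoids integer approximations with small denominator, and trivially otherwise; the Diophantine input $\mathcal{Q}, \delta_0$ controls the size of the exceptional set. The main obstacle is the minor-arc regime $\Delta_1 = X^{-\rho}$ with $\rho = 1/(17 t^2(2\log t + \log\log t + 2.8))$: this is essentially Vinogradov's mean value theorem for the diagonal system of degrees $1,2,\dots,t$, and extracting the sharp exponent $\rho$ requires the full $p$-adic iteration argument of Vinogradov (or, alternatively, Wooley's efficient congruencing). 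Since this is a quoted result from \cite{Vin}, I would cite the proof there rather than reconstruct this technical core.
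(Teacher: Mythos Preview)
The paper does not prove this lemma at all: it is stated with the attribution ``Theorem~1, p.~133 \cite{Vin}'' and is quoted without proof as a black-box input to Theorems~\ref{thm3} and~\ref{thm4}. Your high-level sketch of Vinogradov's method (Vaughan decomposition into Type~I/Type~II sums, Weyl differencing, mean-value theorem for the minor-arc exponent $\rho$) is a reasonable outline of what happens in \cite{Vin}, and you yourself correctly conclude that the technical core must be cited rather than reconstructed. So there is no discrepancy to report: both you and the paper defer to Vinogradov's book for this result.
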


\section{Uniform distribution}
\label{sec3}


In this section we prove the main theorem of this paper:
\begin{Theorem} 
\label{main} For a subpolynomial function $f \in {\bf U}$, the following conditions are equivalent:
\begin{enumerate}
\item $(f(n))_{n \in \mathbb{N}}$ is u.d. $\bmod \, 1$.
\item $(f(p))_{p \in \mathcal{P}}$ is u.d. $\bmod \, 1$.
\item For any $P(x) \in \mathbb{Q}[x]$,
\begin{equation}
\label{Bcondition} 
 \lim_{x \rightarrow \infty} \frac{f(x) - P(x)}{\log x} = \pm \infty.
\end{equation}
\end{enumerate}
\end{Theorem}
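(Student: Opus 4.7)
Since Boshernitzan's theorem from \cite{Bos} already establishes the equivalence (1) $\Leftrightarrow$ (3), the new content is the equivalence of these with (2). I plan to prove (3) $\Rightarrow$ (2) as the main step, and then close the loop with a contrapositive proof of (2) $\Rightarrow$ (3).

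For (3) $\Rightarrow$ (2), by Weyl's criterion it suffices to show $\sum_{p \leq X} e(qf(p)) = o(\pi(X))$ for every nonzero $q \in \mathbb{Z}$, and by Lemma \ref{lemmaS} it is enough to bound $S_q := \sum_{X_0 < p \leq X} e(qf(p))$ by $O(\pi(X)/Q)$ uniformly for $1 \leq q \leq Q = Q(X)$. I would separate into two regimes. In the polynomial-like regime, where $f$ is asymptotically a polynomial in $x$ with (by (3)) some non-integer exponent or irrational leading coefficient, I apply Vinogradov's exponential sum estimate (Lemma \ref{lem2.3.3}) directly, stripping any small correction term by partial summation (Lemma \ref{lem5}). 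In the complementary truly subpolynomial regime, where $f$ is of type $x^{l+}$ or contains a genuine non-integer power, I convert the prime sum into a von Mangoldt weighted sum by partial summation, then apply Vaughan's identity (Lemma \ref{lem7}) to split into type I and type II sums. The type I sums reduce to inner sums of the form $\sum_m e(qf(dm))$, to which either Kusmin--Landau (Lemma \ref{lem3}) or Lemma \ref{lem2.3.1} applies, using the derivative estimates $|f^{(k)}(x)| \asymp x^{\beta - k \pm \epsilon}$ from Proposition \ref{tempered} and the log-ratio bounds of Proposition \ref{Fejer}. The type II sum I attack by Cauchy--Schwarz and Weyl--van der Corput (Lemma \ref{lem6}), which converts the bilinear sum into a diagonal sum whose inner exponential has a $k$-th derivative of controlled size; Lemma \ref{lem2.3.1} then extracts the required cancellation.

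The main obstacle is the type II sum in the truly subpolynomial regime. The fundamental derivative-ratio estimate for Hardy-field functions \eqref{e0} gives only a lower bound of order $1/\log^2 x$, which is close to the boundary of what the exponential sum machinery can digest. Consequently the parameters $u, v$ in Vaughan's identity, the number of iterations of Weyl--van der Corput, and $Q = Q(X)$ itself must be chosen jointly so that each individual error term beats $\pi(X)/Q$ by at least a small power of $\log X$, and the remaining sum over $q \leq Q$ in the Erd\H os--Tur\'an inequality (Lemma \ref{lem1}) still tends to $0$. The arithmetic estimates on $a(m)$, $b(m)$ and $\Lambda(n)$ collected in Remark \ref{etc} will be used to control the factors produced by Vaughan's decomposition.

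For (2) $\Rightarrow$ (3), I argue by contrapositive. Suppose (3) fails, so there exists $P \in \mathbb{Q}[x]$ with $(f-P)/\log x$ not tending to $\pm\infty$. Since $f - P$ lies in a Hardy field, $(f-P)/\log x$ has a finite limit $c \in \mathbb{R}$, and we may write $f(x) = P(x) + c\log x + h(x)$ with $h(x) = o(\log x)$. Choose $q \in \mathbb{N}$ to be the common denominator of the coefficients of $P$; then $qP \in \mathbb{Z}[x]$ and $e(qP(p)) \equiv 1$, whence
$$\sum_{p \leq X} e(qf(p)) \;=\; \sum_{p \leq X} p^{2\pi i q c}\, e(qh(p)).$$
The Prime Number Theorem with error (Lemma \ref{lem2}) and partial summation give $\sum_{p\leq X} p^{i\vartheta}$ of exact order $\pi(X)$ uniformly for bounded real $\vartheta$, and the slowly varying factor $e(qh(p))$ cannot destroy this lower bound because $h = o(\log x)$ forces its derivative to be eventually much smaller than $1$, so the phase $qh(p)$ barely moves across long dyadic ranges. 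Hence $|\sum_p e(qf(p))| \gg \pi(X)$, contradicting Weyl's criterion for $(f(p))_{p \in \mathcal{P}}$ to be u.d.
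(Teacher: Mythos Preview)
Your outline for $(3) \Rightarrow (2)$ matches the paper's strategy: convert to von~Mangoldt--weighted sums, apply Vaughan's identity, and treat the resulting type~I and type~II sums via Lemmas~\ref{lem3} and~\ref{lem2.3.1} together with the derivative control from Propositions~\ref{Fejer} and~\ref{tempered}. The paper's case analysis is finer than your two-regime split, however: it proves four separate sub-theorems (Theorems~\ref{thm1}--\ref{thm4}), and in particular the transitional situation where $f = f_1 + f_2$ with $f_2 \in \mathbb{R}[x]$ nonconstant and $f_1$ of type $x^{0+}$ satisfying only $|f_1'(x)| \ll (\log x)^c/x$ (Case~II of Theorem~\ref{thm3}) is handled neither by the pure Vaughan machinery nor by Vinogradov alone, but by a hybrid that brings in the Siegel--Walfisz theorem (Lemma~\ref{SW}) when $f_2$ has rational coefficients and a delicate major/minor-arc split otherwise. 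Your dichotomy does not isolate this case.

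For $(2) \Rightarrow (3)$ you take a genuinely different route. The paper sets $g = qf - qP$ (so that $|g'(x)| \ll 1/x$), shows via the computation behind \eqref{Con} that equidistribution of $(g(p))_{p \in \mathcal{P}}$ transfers to equidistribution of $(g(n))_{n \in \mathbb{N}}$, and then invokes Boshernitzan's criterion once more to reach a contradiction. Your approach of producing a direct lower bound on the prime Weyl sum is a legitimate alternative and has the advantage of not re-invoking \cite{Bos}. One point needs sharpening: the assertion that ``$h = o(\log x)$ forces its derivative to be eventually much smaller than $1$'' is not what your Abel summation actually needs; you need $h'(x) = o(1/x)$, which does \emph{not} follow from $h = o(\log x)$ for a general smooth $h$. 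It does follow here because $f - P$ lies in a Hardy field: then $x(f-P)'(x)$ has a limit in $[-\infty,\infty]$, and since $(f-P)/\log x \to c$ that limit must equal $c$ (otherwise L'H\^opital forces $(f-P)/\log x \to \pm\infty$), whence $h'(x) = (f-P)'(x) - c/x = o(1/x)$. With this in hand, partial summation against $S(t) = \sum_{p \leq t} p^{2\pi i qc}$ gives
\[
\sum_{p \leq X} e(qf(p)) = e(qh(X))\,S(X) + o(\pi(X)),
\]
and the PNT computation you describe yields $|S(X)| \sim \pi(X)/|1 + 2\pi i qc|$, completing the contradiction.
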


As was already mentioned in the introduction, the equivalence of conditions (1) and (3) was established in \cite{Bos}. So it is sufficient to show $(2) \Leftrightarrow (3)$. The proof of the implication $(2) \Rightarrow (3)$ is relatively routine and is given at the end of this section.
The main task is to show $(3) \Rightarrow (2)$.
In order to do this, we will use the classical method of reducing the evaluation of the sums $\sum\limits_{X_0 < p \leq X} e(qf(p))$, 
where $f(x) \in {\bf U}$ and $q \in \mathbb{Z} \backslash \{0\}$, 
to the estimation of expressions of the form $\sum \Lambda(n) e(qf(n))$. 
Indeed, it follows from Lemma \ref{lem5} that 
\begin{align*}
\sum_{X_0 < p \leq X} e(qf(p)) &= \sum_{X_0 < n \leq X} \frac{1}{\log n} \Lambda(n) \, e(qf(n)) + O (\sqrt{X}) \\ 
 &\ll \frac{1}{\log X} \max_{X_0 < X_1 \leq X} \left|\sum_{X_0 \leq n \leq X_1} \Lambda(n) \, e(qf(n)) \right| + \sqrt{X}.
\end{align*}
The task of estimating the sums $\sum \Lambda(n) e(qf(n))$ can, in turn, be reduced to estimating the classical expressions $\sum e(qf(n))$.
In general, a subpolynomial function $f(x) \in \bf{U}$ can be written as $f(x) = f_1(x)+ f_2(x)$, where $f_1(x)$ is of type $x^{l+}$ for some $l \geq 0$ and $f_2(x) \in \mathbb{R}[x]$.
If $f_1(x)$ is of type $x^{l+}$ for some $l \geq 1$ or $f_1(x)$ is of type $x^{0+}$ with an appropriate growth rate 
(this, in our context, means roughly that for some $c>0$ and all large $x$, $|f_1'(x)| \geq \frac{\log^c x}{x}$), 
then Lemmas \ref{lem3} and \ref{lem2.3.1} provide a good estimate for $\sum_{n} e(q f(n))$.
Then Vaughan's identity (Lemma \ref{lem7}) and the estimates listed in Remark \ref{etc} allow one to get the desired estimate for $\sum \Lambda(n) e(qf(n))$.
On the other hand, if $f_1(x)$ is of ``slow growth" one needs to employ other methods 
- the prime number theorem or Vinogradov's lemma (Lemma \ref{lem2.3.3}) - to estimate $\sum\limits_{ p \leq X} e(q f(p))$. 

Accordingly, the proof of Theorem \ref{main} will be achieved by separately considering for our function $f= f_1 + f_2$ the following four cases:
\begin{enumerate}[(a)]
\item $f_1(x)$ is of the type $x^{0+}$ and $\lim\limits_{x \rightarrow \infty} \frac{f(x)}{\log x} = \pm \infty$ and $f_2(x) =0$ (Theorem \ref{thm1}).
\item $f_1 (x)$ is of the type $x^{l+}$ for some $l \geq 1$ and $f_2(x) =0$ (Theorem \ref{thm2}).
\item $f_1(x)$ is as in (a) and (b) and $f_2(x) \in \mathbb{R}[x]$ (Theorem \ref{thm3}).
\item $f_1(x)$ is of the type $x^{0+}$ and $\lim\limits_{x \rightarrow \infty} \left| \frac{f_1(x)}{\log x}\right| < \infty$ and $f_2(x) \in \mathbb{R}[x]$ such that one of coefficients of $f_2(x) - f_2(0)$ is irrational (Theorem \ref{thm4}).
\end{enumerate}

\begin{Theorem}
\label{thm1}
Let $f(x) \in {\bf U}$ be a subpolynomial function. 
Suppose that 
\begin{equation}
\label{FC}
f(x) \,\, \text{ is of the type} \,\,x^{0+} \quad \text{and} \quad \lim_{x \rightarrow \infty} \left| \frac{f(x)}{\log x}\right| = \lim_{x \rightarrow \infty} x |f'(x)| = \infty.
\end{equation}
Then $(f(p))_{p \in \mathcal{P}}$ is u.d. mod 1.
\end{Theorem}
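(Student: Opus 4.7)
The plan is to invoke Lemma \ref{lemmaS}: it suffices to exhibit a function $Q = Q(X) \to \infty$ such that, uniformly for $1 \leq q \leq Q$,
$$S_q := \sum_{X_0 < p \leq X} e(qf(p)) \ll \pi(X)/Q, \quad X_0 = X/Q.$$
First I would pass from primes to the von Mangoldt function via partial summation (Lemma \ref{lem5}), which, up to an admissible error $O(\sqrt{X})$, reduces the bound on $S_q$ to bounding
$$\frac{1}{\log X} \max_{X_0 < X_1 \leq X} \Bigl|\sum_{X_0 < n \leq X_1} \Lambda(n)\, e(qf(n))\Bigr|$$
by $X/(Q\log X)$. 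Setting $T = \sum_{X_0 < n \leq X_1} \Lambda(n)\, e(qf(n))$, the task becomes $T \ll X/Q$.

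Next, I would apply Vaughan's identity (Lemma \ref{lem7}) with parameters $u, v$ to be chosen as small powers of $X$. This decomposes $T = T_1 - T_2 - T_3$ into Type I sums $T_1, T_2$ of the shape $\sum_m c(m) \sum_r e(qf(mr))$, and a Type II sum $T_3 = \sum_m b(m) \sum_n \Lambda(n)\, e(qf(mn))$ with $m$, $n$ running over dyadic intervals. For each Type I piece, fix $m$ and apply Kusmin--Landau (Lemma \ref{lem3}) to the inner sum in $r$: its phase has derivative $qm\, f'(mr)$, which by \eqref{e2} satisfies $qm|f'(mr)| \asymp qf(mr)/(r\log(mr))$. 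Under the hypothesis $f(x)/\log x \to \infty$ and a prudent choice of $Q$, this quantity lies in $(0, 1/2)$ with a useful lower bound, giving an inner sum of size $\ll r\log(mr)/(qf(mr))$. Summing over $m$ using the estimate $\sum_{m \leq uv} |a(m)|/m \ll \log u \cdot \log^2 v$ from Remark \ref{etc} then controls $T_1, T_2$.

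The main obstacle is the Type II sum $T_3$. I would attack it with Weyl--van der Corput (Lemma \ref{lem6}) applied in the $n$-variable, with a smoothing window $H$ to be optimized. This converts $|T_3|^2$ into a sum over $|h| \leq H$ of expressions $\sum_{m,n} b(m)\overline{b(m)}\, \Lambda(n) \overline{\Lambda(n+h)}\, e\bigl(q(f(m(n+h)) - f(mn))\bigr)$. The diagonal $h=0$ is absorbed by the $\sum b^2$ and $\sum \Lambda^2$ bounds from Remark \ref{etc}. For $h \neq 0$, I would switch orders of summation and apply Kusmin--Landau to the inner sum in $m$: the derivative of $q(f(m(n+h)) - f(mn))$ with respect to $m$ equals $q[(n+h)f'(m(n+h)) - nf'(mn)]$, which the mean value theorem rewrites as $qh\,(xf'(x))'\bigr|_{x=\theta}$ for some $\theta \in (mn, m(n+h))$. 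Both the size and the non-vanishing of this quantity are controlled by \eqref{e2} and \eqref{e3} of Proposition \ref{Fejer}.

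Finally, I would choose $u, v, H$, and $Q$ to balance the Type I and Type II estimates. The hypotheses $f(x)/\log x \to \infty$ together with $xf'(x) \to \infty$ are exactly what permit $Q \to \infty$ to be chosen slowly enough that the logarithmic losses from Proposition \ref{Fejer} and from Weyl--van der Corput are absorbed. The delicate part will be the bookkeeping in this balancing: because $f$ is of type $x^{0+}$, $f'$ decays only slightly faster than $1/x$ (up to logs), so the window in which Kusmin--Landau yields real cancellation is narrow, and one must track the interplay between the sizes of $q$, $m$, $n$, and $h$ carefully in order to squeeze the required saving out of each bilinear block. Once $T \ll X/Q$ is established, Lemma \ref{lemmaS} delivers uniform distribution of $(f(p))_{p \in \mathcal{P}}$.
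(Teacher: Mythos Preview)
Your overall architecture---reduce to Lemma~\ref{lemmaS}, pass to $\Lambda$, apply Vaughan, handle Type~I by Kusmin--Landau and Type~II by Cauchy--Schwarz plus Weyl--van der Corput followed by Kusmin--Landau in the remaining variable---matches the paper's. But there is a genuine gap in your Type~I step, and it propagates to Type~II as well.

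You assert that for the inner sum $\sum_r e(qf(mr))$ the derivative $qm\,f'(mr)$ ``lies in $(0,1/2)$ with a useful lower bound.'' The lower bound is fine, but the upper bound can fail badly. A type-$x^{0+}$ function can have $f'(x)$ as large as, say, $1/\log x$ (think $f(x)=x/\log x$); then with $m$ a small power of $X$, as you propose, $qm\,f'(mr)$ is a positive power of $X$ and Lemma~\ref{lem3} is inapplicable. No choice of $Q$ rescues this, since $Q$ bounds $q$, not $m$. The paper resolves this by a two-level case split that your sketch omits. First it separates off the range where $f'(n)\le \log^{15}X/X$ and treats that piece \emph{without} Vaughan at all: partial summation against the prime number theorem error term, then a single application of Kusmin--Landau. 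On the remaining range it applies Vaughan, but with two regimes: when $|f'(X_3)|\le \log^{-10}X$ it takes $u=v=\log^{3}X$ (logarithmic, not a power of $X$), which keeps $m\le uv$ small enough that $qm\,f'(mr)\le 1/2$ is restored; when $|f'(X_3)|\ge \log^{-10}X$ it abandons Kusmin--Landau for the Type~I sums and instead uses the second-derivative estimate of Lemma~\ref{lem2.3.1} (van der Corput $k=1$), exploiting \eqref{e4} to control $|f''|\asymp |f'|/x$ up to logs. The same dichotomy governs the Type~II sums after Weyl--van der Corput: in the large-$f'$ case one again needs Lemma~\ref{lem2.3.1}, not Lemma~\ref{lem3}, on the inner sum.

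In short, the missing idea is that a single tool (Kusmin--Landau) cannot cover the full range of $f'$ permitted by the hypothesis; you must split according to the size of $f'$ and, in the large-$f'$ regime, invoke second-derivative exponential-sum bounds. Once you insert this split (and the preliminary PNT treatment of the very-small-$f'$ tail), the rest of your outline goes through essentially as the paper does it.
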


\begin{proof}
In light of Lemma \ref{lemmaS}, it is sufficient to show that  if $Q(X) \leq \log X$ is an unbounded positive increasing function with 
\begin{equation}
\label{eqn3.1} 
 Q \leq \frac{1}{2} \left| f' \left( \frac{X}{ \log X}\right) \right|^{-1} \,\, \textrm{and}  \,\, Q \leq |Xf'(X)|,
\end{equation}
then  
$$ S:= \sum_{X_0 < p \leq X} e(qf(p)) = \sum_{X_0 < n \leq X} \Lambda_1(n) e(qf(n)) \ll \frac{\pi(X)}{Q},$$
where $X_0 = \frac{X}{Q}$ and $\Lambda_1(n)$ is the characteristic function of the set of prime numbers.

Without loss of generality, we assume that $f(x) \geq 0$ eventually, so $f(x) \uparrow \infty, f'(x) \downarrow 0$ and $x f'(x) \uparrow \infty$.
We split the sum $S$ as follows 
$$S = S_0 + S_1 = \sum_{X_1 < p \leq X}  e(qf(p)) + \sum_{X_0 < p \leq X_1} e (qf(p)),$$
where $f'(n) \leq \frac{\log^{15} X}{X}$ for $n > X_1$ and $f'(n) >  \frac{\log^{15} X}{X}$ for $n \leq X_1$.

Using Lemma \ref{lem5} with $a_n= e(qf(n))$ and $b_n= \Lambda_1(n)$ we obtain (upon invoking Lemma \ref{lem2})
\begin{align}
\label{S_0}
S_0  :=&\sum_{X_1 < p \leq X} e(qf(p)) = \sum_{X_1 < n \leq X} \Lambda_1(n) e(qf(n)) \\
         =&\sum_{n=X_1}^{X-1} (e(qf(n)) - e(qf(n+1))) (\pi(n) - \pi(X_1)) + e(qf(X)) (\pi(X) - \pi(X_1))  \notag \\
         =&\sum_{n=X_1}^{X-1} [e(qf(n)) - e(qf(n+1))] [Li (n) - Li (X_1)] + e(qf(X))[Li (X) - Li (X_1)]  \notag \\
         &+ O \left( \left(X \exp (- C \sqrt{\log X}) \right) \left( \sum_{n=X_1}^{X-1}  |e(qf(n)) - e(qf(n+1))| +1 \right)\right). \notag
\end{align}
Since
$$ |e(qf(n)) - e(qf(n+1))| = | 1 - e(q(f(n+1) -f(n))) | \ll q|f'(n)| \leq \frac{q \log^{15}X}{X},$$
the $O$-term is 
$$
 \ll X \exp(- C \sqrt{\log X}) \left(\sum_{n \leq X}\frac{q \log^{15}X}{X} +1 \right) \ll \frac{\pi(X)}{\log X}. 
$$
Now we use Lemma \ref{lem5} with $a_n = Li(n) - Li(X_1)$ and $b_n = e(qf(n)) - e(qf(n+1))$ and obtain
 \begin{equation}
\label{Con}
\begin{split}
 S_0 &= \sum_{n=X_1}^X \int_n^{n+1} \frac{dx}{\log x} e(qf(n)) + O \left( \frac{\pi(X)}{\log X}\right) \\ 
&= \sum_{n=X_1}^X \frac{1}{\log n} e(qf(n))  + O\left(\frac{\pi(X)}{\log X}\right).     
\end{split}
\end{equation}
From \eqref{eqn3.1}, $\left| \frac{d}{dn} (q f(n))\right| = |qf'(n)| \leq \frac{1}{2}$\footnote{Throughout this section we are tacitly assuming that $n$ is a continuous variable.}, so $\| qf'\| > q f'(X)$ for $Y \leq n \leq X$.
Use Lemma \ref{lem5} with $a_n = \frac{1}{\log n}$ and $b_n = e(qf(n))$ and then use Lemma \ref{lem3} and \eqref{eqn3.1}: 
\begin{equation}
\label{eqn2}
S_0 \ll  \frac{1}{\log X} \frac{1}{q f'(X)} + \frac{\pi(X)}{Q} \ll \frac{\pi(X)}{Q}. 
\end{equation}

Now we need to evaluate
\begin{equation*}
S_1 := \sum_{X_0 < p \leq X_1} e(q f(p)) = \sum_{X_0 < n \leq X_1} \frac{\Lambda(n)}{\log n} e(qf(n)) + O(\sqrt{X}). 
\end{equation*}
To complete the proof we have to show that $S_1 \ll \frac{\pi(X)}{Q}$. 
Using Lemma \ref{lem5} with $a_n = \frac{1}{\log n}$ and $b_n = \Lambda(n) e(qf(n))$ we obtain
\begin{equation*}
|S_1|  \ll \frac{1}{\log X} \max_{X_2 \leq X_1} \left| \sum_{ X_0 < n \leq  X_2} \Lambda(n) e(q f(n))\right|  + \sqrt{X}.
\end{equation*}
To estimate $\sum\limits_{X_0 < n \leq X_2} \Lambda(n) e(qf(n))$, 
we divide the interval $( X_0, X_2 ]$ into $\ll \log Q$ subintervals of the form $(X_3, X_4] \subset (X_3, 2 X_3]$ and evaluate 
$$S_2 := \sum_{X_3 < n \leq X_4} \Lambda(n) e (q f(n)).$$
Using Lemma \ref{lem7} with $g(n) = e(qf(n)) \cdot 1_{(X_3, X_4]} (n)$ and some $u, v$ to be defined later, we obtain
\begin{equation}
 \label{eqn4} 
 |S_2| \leq |S_3| + |S_4| + |S_5|
\end{equation}
where $S_3, S_4,$ and $S_5$ correspond to the sums $T_1, T_2$ and $T_3$ in Lemma \ref{lem7}:
\begin{align*}
S_3 &= \sum_{d \leq u} \sum_{\frac{X_3}{d} \leq m \leq \frac{X_4}{d}} \mu(d) \, (\log m) \, e(qf(dm)),\\
S_4 &= \sum_{m \leq uv} \sum_{\frac{X_3}{m} \leq r \leq \frac{X_4}{m}} a(m) \, e(qf(mr)), \quad a(m) = \sum\limits_{d \leq u} \sum\limits_{\substack{n \leq v \\ dn=m}} \mu(d) \, \Lambda(n)\\
S_5 &= \sum_{m > u} \sum_{\substack{\frac{X_3}{m} \leq n \leq \frac{X_4}{m} \\ v > n} } b(m) \, \Lambda(n) \, e(qf(mn)), \quad b(m) = \sum\limits_{\substack{d \leq u \\ d | m}} \mu(d). 
\end{align*}
To estimate $S_3, S_4$ and $S_5$, we will consider two cases: 
$$(a)\, |f'(X_3)| \leq \log^{-10} X \quad \text{and} \quad  (b)\, |f'(X_3)| \geq \log^{-10} X.$$

Case (a). Assume first that $|f'(X_3)| \leq \log^{-10} X$. We take $u=v= \log^3 X$. 

Let us estimate the sum $S_3$. For $X_3 \leq dm \leq X_4$, we have
$$\left| \frac{\partial}{\partial m} (q f(dm))\right| = \left| qd \, f'(dm) \right| \leq \frac{qd}{\log^{10} X} \leq \frac{1}{2} \, \, \, \text{and}
\, \, \, \left| \frac{\partial}{\partial m} (q f(dm))\right| =  qd \left| f'(dm) \right| \geq qd \, \frac{\log^{15} X}{X}.$$
Using Lemma \ref{lem5} first and Lemma \ref{lem3} after that, we obtain
\begin{align}
\label{eqn5'}
|S_3| &\leq \sum_{d \leq u} \left| \sum_{\frac{X_3}{d} \leq m \leq \frac{X_4}{d}} \log m \, e (q f(dm)) \right| \\
\label{eqn5} 
&\ll \log X \sum_{d\leq u} \frac{X}{qd \log^{15} X } \leq \frac{X}{ \log^3 X}.
\end{align}

Let us estimate the sum $S_4$. For $X_3 \leq mr \leq X_4$, we have
$$ \left| \frac{\partial }{\partial r} (q f(mr)) \right| = qm |f'(mr)| \ll \frac{qm}{\log^{10} X} \leq \frac{1}{2} \, \, \, \text{and} \, \, \,
 \left| \frac{\partial }{\partial r} (q f(mr)) \right| \geq qm \frac{\log^{15}X}{X}.$$

Using Lemma \ref{lem3} to evaluate the sum over $r$, we get 
\begin{align}
\label{eqn6'}
|S_4| &\leq \sum_{m \leq uv} |a(m)| \, \left| \sum_{\frac{X_3}{m} < r \leq \frac{X_4}{m}} e (q f(mr)) \right| \\
         \label{eqn6}
         & \ll \sum_{m \leq uv} |a(m)| \, \frac{X}{qm \log^{15}X} 
          \ll \frac{X}{ \log^{15}X} \sum_{d \leq u} \frac{|\mu(d)|}{d} \sum_{n \leq v} \frac{\Lambda(n)}{n} \ll \frac{X}{\log^3 X}.
\end{align}

To evaluate the sum $S_5$, we divide the interval $\left( u, \frac{X_4}{u} \right]$ into $\ll \log X$ subintervals of the form $(M, M_1] \subset (M, 2M]$ and evaluate
$$S_6 := \sum_{M < m \leq M_1} \sum_{\substack{\frac{X_3}{m} \leq n \leq \frac{X_4}{m} \\ v > n} } b(m),$$
Note that 
\begin{equation}
\label{S_5}
|S_5| \ll \log X \cdot \max |S_6|.
\end{equation}

To estimate $S_6$, we will consider two cases $M \geq \sqrt{X_4}$ and $M < \sqrt{X_4}$.

If $M \geq \sqrt{X_4}$, we use the Cauchy-Schwartz inequality to obtain
\begin{align}
\label{eqn7}
\left| S_6 \right|^2 &= \left| \sum_{M < m \leq M_1} b(m)  \sum_{\substack{ \frac{X_3}{m} < n \leq \frac{X_4}{m} \\ n > v} } \Lambda(n) e(q f(mn)) \right|^2 \notag \\
&\leq \left(\sum_{M < m \leq M_1} |b(m)|^2 \right) \left( \sum_{M < m \leq M_1} \left| \sum_{\substack{\frac{X_3}{m} < n \leq \frac{X_4}{m}\\ n >v }} \Lambda(n) e(q f(mn)) \right|^2 \right)
\end{align}
With the help of Remark \ref{etc}, the first term in \eqref{eqn7} can be estimated as follows:
 $$\sum_{M < m \leq M_1} |b(m)|^2 \ll M \log^3 u.$$
To estimate the second term in \eqref{eqn7}, 
we will apply Lemma \ref{lem6} with the largest integer $H \leq \frac{X_3}{M_1}$ such that 
\begin{equation}
\label{eqn9}
H q \max_{X_3 \leq t \leq X_4} |f'(t) + t f''(t)| \leq \frac{1}{2}. 
\end{equation} 
Then,
\begin{align}
\label{eqn10}
 |S_6|^2 &\ll M \log^3 u \, |A+ B|, \,\, \text{where}  \\ 
 A &=\sum_{M < m \leq M_1} \frac{X_4}{MH} \left( \sum_{h=1}^H \sum_{\substack{\frac{X_3}{m} \leq n, n+h \leq \frac{X_4}{m} \\ n, n+h > v}} \Lambda(n) \Lambda(n+h) \, e (q f(mn+mh) - q f(mn)) \right) \notag \\
 B &= \sum_{M < m \leq M_1} \frac{X_4}{MH} \sum_{\substack{\frac{X_3}{m} \leq n \leq \frac{X_4}{m} \\ n > v}} \Lambda^2(n). \notag
\end{align}
Note that for some $t_0 \in [t,t_1] = [mn, mn+mh]$
\begin{align*}
&\left| \frac{\partial}{\partial m} (qf(mn+mh) - qf(mn)) \right| = \big| q [(n+h) f'(mn+mh)- n f'(mn) ] \big| \\ 
&= \frac{q}{m} |(t_1 f'(t_1) - t f'(t))| = \frac{q(t_1 -t)}{m} \left| \frac{d}{dt} (t f'(t))|_{t=t_0} \right| = qh |f'(t_0) + t_0 f''(t_0)|.
\end{align*}
For any pair $(h,n)$, where $n, n+h \in (\frac{X_3}{M_1}, \frac{X_4}{M}]$, 
we will use Lemma \ref{lem3} with $\lambda= q h \min\limits_{X_3 \leq t \leq X_4} |f'(t) + t f''(t)|$ to estimate
$$\left| \sum_{M < m \leq M_1} e (qf(mn+mh) - qf(mn))\right| \ll \frac{1}{\lambda}.$$
To estimate \eqref{eqn10}, we consider two cases $H = \frac{X_3}{M_1}$ and $H < \frac{X_3}{M_1}$ for \eqref{eqn9}.
If $H = \frac{X_3}{M_1},$ use $f'(n) > \frac{\log^{15}X}{X}$ for $n \geq X_0$ and \eqref{e3} from Proposition \ref{Fejer} to get 
\begin{equation}
\label{^}
\lambda = qh |f'(t_0)| \left| 1 + \frac{t_0 f''(t_0)}{f'(t_0)}\right| \gg qh \frac{\log^{15} X}{X} \frac{1}{\log^2 X}.
\end{equation}
Hence
\begin{align}
\label{fin S_6_1}
|S_6|^2 &\ll  M \log^3 u \sum_{h=1}^H  
\sum_{\frac{X_3}{M_1} < n, n+h \leq \frac{X_4}{M}} \Lambda(n) \Lambda(n+h) \frac{X}{qh \log^{13}X} + M  \log^3 u \, \cdot  \frac{X_3}{M} \,  \log \frac{X_3}{M} \notag \\
&\ll \frac{X X_3 \log^3 u}{\log^{11} X} +  X \log^3 u \, \log X \ll \frac{X^2}{\log^6 X}. 
\end{align}
For $H < \frac{X_3}{M_1}$, note that 
$$|f'(X_4)| \gg \frac{ |f'(X_3)|}{\log X} \gg \frac{1}{\log X} \max\limits_{X_3 \leq t \leq X_4} |t f''(t) + f'(t)|.$$ 
From \eqref{e3}, \eqref{e1} and the fact that $(H+1) \, q \max\limits_{X_3 \leq t \leq X_4} |t f''(t) + f'(t)| \geq \frac{1}{2}$, we get
\begin{equation}
\begin{split}
\label{^2}
\lambda &= qh \min_{X_3 \leq t \leq X_4 } |tf''(t) + f'(t)| \gg qh \min_{X_3 \leq t \leq X_4} \frac{|f'(t)|}{\log^2 t} \gg \frac{qh}{\log^2 X} |f'(X_4)| \\
             &\gg \frac{qh}{\log^3 X} \max_{X_3 \leq t \leq X_4} |t f''(t) + f'(t)| \gg \frac{h}{2H \log^3 X}.
\end{split}
\end{equation} 
Using Lemma \ref{lem3}, we can estimate $A$:
\begin{align*}
A &\ll \frac{X_4}{MH} \sum_{h=1}^H \sum_{\frac{X_3}{M_1} < n, n+h \leq \frac{X_4}{M}} \Lambda(n) \Lambda(n+h) \frac{2H \log^3 X}{h}  \\
&\ll \frac{X_4}{MH} \left( \frac{X_3}{M_1} \log \frac{X_3}{M_1} \right) \left( 2H \log^3 X \right) \log H \ll \frac{X^2 \log^5 X }{M^2}.
\end{align*}
Now let us estimate $B$. From \eqref{e3}, $\lambda = qh \min\limits_{X_3 \leq t \leq X_4} |f'(t) + t f''(t)| \leq qh |f'(X_3)|$. So from \eqref{^2},
\begin{equation}
\label{^3} 
\frac{1}{H} \ll \frac{2 \lambda \log^3 X}{h} \ll \frac{2 \log^3 X}{h} qh |f'(X_3)| \ll \frac{2q \log^3 X}{\log^{10} X} \leq \frac{2}{\log^6 X}.
\end{equation}
Then,
$$
B \ll \sum_{M < m \leq M_1} \frac{X_4}{MH} \left( \frac{X_3}{m} \log \frac{X_3}{m} \right) 
\ll \frac{X^2 \log X}{MH} \ll \frac{X^2}{M \log^5 X}.$$
Therefore, we get
\begin{equation}
\label{fin S_6_2}
\begin{split}
|S_6|^2 &\ll \frac{X^2 \log^3 u \, \cdot \log^5 X }{M} + \frac{X^2 \log^3 u}{\log^5 X}   \\
&\ll X^{\frac{3}{2}} \log^{6} X + \frac{X^2}{\log^{9/2}X} \ll \frac{X^2}{\log^{9/2} X}.
\end{split}
\end{equation}
From \eqref{fin S_6_1} and \eqref{fin S_6_2}, summing over all $M \geq \sqrt{X_4}$, we obtain
\begin{equation}
\label{eqn13}
|S_5| \ll \log X \cdot \frac{X}{\log^{9/4} X} =  \frac{X}{\log^{5/4}X }.
\end{equation}

If $M < \sqrt{X_4}$, we interchange the order of summation in \eqref{eqn7} and again use the Cauchy-Schwartz inequality:
\begin{align*}
|S_6|^2
&\leq \left| \sum_{\substack{N \leq n \leq N_1 \\ n > v}} \Lambda(n) \sum_{\frac{X_3}{n} \leq m \leq \frac{X_4}{n}} b(m) e(qf(mn)) \right|^2 \\
&\leq \left( \sum_{N \leq n \leq N_1} \Lambda^2(n) \right) \left( \sum_{N \leq n \leq N_1} \left| \sum_{\frac{X_3}{n} \leq m \leq \frac{X_4}{n}} b(m) e(qf(mn)) \right|^2 \right),
\end{align*}
where $N =\frac{X_3}{M_1}$ and $N_1 = \frac{X_4}{M}$. The rest of the proof is similar to proof for the case $M \geq \sqrt{X_4}$. 

Let $H$ be the largest integer $\leq M/2$ such that
$qH \max\limits_{X_3 \leq t \leq X_4} |f'(t) + t f''(t) | \leq 1/2.$ Then,
\begin{align}
\label{eqn14}
|S_6|^2 &\ll  N \log N \, |C+D|, \,\, \text{where} \\ 
C &= \sum_{N \leq n \leq N_1} \frac{X_4}{NH} \left(  \sum_{h=1}^H \sum_{\frac{X_3}{n} \leq m, m+h \leq \frac{X_4}{n}}  b(m) b(m+h) e(qf(mn+mh) - qf(mn)) \right) \notag \\
D &= \sum_{N \leq n \leq N_1} \frac{X_4}{NH} \sum_{\frac{X_3}{n} \leq m \leq \frac{X_4}{n}} |b(m)|^2. \notag
\end{align}
For any pair $(m,h)$, where $m, m+h \in (\frac{X_3}{N_1}, \frac{X_4}{N}]$,  using Lemma \ref{lem3} with 
$\lambda = qh \min\limits_{X_3 \leq t \leq X_4} |f'(t) + tf''(t)| \leq 1/2$, we get
$$ \left| \sum_{N \leq n \leq N_1} e(qf(mn+mh) - qf(mn))\right| \ll \frac{1}{\lambda}.$$
From Remark \ref{etc},  $$\sum\limits_{M \leq m \leq M_1} |b(m)|^2 \ll M \log^3 u, \quad \sum_{M \leq m, m+h \leq M_1} |b(m)| \, |b(m+h)| \ll M \log^3 u.$$
If $H = \frac{X_3}{N_1} (=M/2)$, then, as above (see \eqref{^}), we get $\lambda \geq \frac{qh \log^{13}X}{X}$ and so  
\begin{align}
\label{fin S_6_3}
|S_6|^2 &\ll N \log N \left(  \sum_{h=1}^H \sum_{M \leq m, m+h \leq M_1} |b(m)| \, |b(m+h)| \frac{X}{qh\log^{13}X} + \sum_{M \leq m \leq M_1} |b(m)|^2 \right) \notag \\
 &\ll \frac{X_4 X \log^3 u}{\log^{11}X} + X_4 \log \frac{X_4}{M} \log^3 u \ll \frac{X^2}{\log^6 X}.
\end{align}
If $H< M/2$, then, as in \eqref{^2} and \eqref{^3}, we get $\lambda \geq \frac{h}{2H \log^3 X}$ and $\frac{1}{H} \ll \frac{1}{\log^6 X}$. Thus we obtain
$$C \ll \frac{X_4}{NH} (M \log^3 u) \, (2H \log^3 X) \log H \ll \frac{X^2}{N^2} \, \log^5 X $$
and $$D \ll \frac{X_4}{NH} \sum_{N \leq n \leq N_1} \frac{X_3}{n} \log^3 u \ll \frac{X_4}{NH} X_3 \log^3 u \ll \frac{X^2 \log^3 u}{N \log^6 X},$$
so 
\begin{equation}
\label{fin S_6_4}
|S_6|^2 \leq \frac{X^2  \log^6 X }{N} + \frac {X^2 \log^3 u}{\log^5 X} \ll \frac{X^2}{\log^{9/2}X}.
\end{equation}

From \eqref{fin S_6_3} and \eqref{fin S_6_4}, summing $|S_6|$ over all $M < \sqrt{X_4}$ gives again the same estimate for $S_5$ as in \eqref{eqn13}. 
Therefore, 
$$|S_1| \ll \frac{\log Q}{\log X} \left(\frac{X}{\log^3 X} + \frac{X}{\log^3 X} + \frac{X}{\log^{5/4}X} \right) + \sqrt{X} \ll \frac{\pi(X)}{Q}.$$


Case (b). Now we assume that $|f'(X_3)| \geq \log^{-10} X$. 
Take $u=v= X^{1/10}$ and argue as above but use Lemma \ref{lem2.3.1} with $k=1$ instead of Lemma \ref{lem3}. 
The formulas \eqref{e0} and \eqref{e3} imply that for $x \in (X_3, X_4] \subset (X_3, 2 X_3]$ 
\begin{equation}
\label{estimate f''}
|f''(x)| \ll \frac{|f'(x)|}{x} \ll \frac{1}{X_3}, \quad |f''(x)| \gg  \frac{|f'(x)|}{x \log^2 x} \gg \frac{1}{X_3 \log^{13} X_3} . 
\end{equation}

To evaluate $S_3$, let $\lambda_2 = \left| \frac{d^2}{dm^2} (qf(dm)) \right| = qd^2 |f''(dm)|$. 
Then 
$$\lambda:= \frac{qd^2}{X_3 \log^{13} X_3} \ll \lambda_2 \ll \frac{qd^2}{X_3}=: \alpha \lambda.$$
Apply \eqref{nov18} in Lemma \ref{lem2.3.1} with $k=1$ to evaluate the sum in \eqref{eqn5'}:
\begin{align*}
|S_3| &\ll \sum_{d \leq u} \log X \left| \sum_{\frac{X_3}{d} \leq m \leq \frac{X_4}{d}} e(qf(dm))\right| \\
         &\ll \log X \sum_{d \leq u} \frac{X_3}{d} \left( \sqrt{\frac{qd^2}{X_3}} + \sqrt{\frac{d \log^{14}X_3}{X_3}}\, \right) \ll X^{4/5}.
\end{align*}

Similarly, to evaluate $S_4$ let $\mu_2 = \left|\frac{d^2}{dr^2} qf(mr)\right| = qm^2 |f''(mr)|$. 
Then 
$$\lambda:= \frac{qm^2}{X_3 \log^{13} X_3} \ll \mu_2 \ll \frac{qm^2}{X_3}=: \alpha \lambda.$$
Apply Lemma \ref{lem2.3.1} with $k=1$ to evaluate the sum in \eqref{eqn6'}:
\begin{align*}
|S_4| &\ll \sum_{m \leq uv} |a(m)| \left| \sum_{\frac{X_3}{m} \leq r \leq \frac{X_4}{m}} e(qf(mr))\right| \\
         &\ll  \sum_{m \leq uv} |a(m)| \frac{X_3}{m} \left( \sqrt{\frac{qm^2}{X_3}} + \sqrt{\frac{m \log^{14}X_3}{X_3}}\,\right) \ll X^{4/5}.
\end{align*}

To evaluate $S_6$ for $M \geq \sqrt{X_4}$, we evaluate the sum over $m$ in \eqref{eqn10} with $H = \min \{\frac{X_3}{M_1}, M^{1/3}\}$. 
Denoting $g(m) = q f(mn+ mh) - qf(mn)$ with $(t_1 = mn+mh, t= mn, t_0 \in [t,t_1])$, as above, we get 
\begin{align*}
g''(m) 
&= q [(n+h)^2 f''(mn + mh) - n^2 f(mn)] = \frac{q}{m^2} [t_1^2 f''(t_1) - t^2 f''(t)] \\
&= \frac{q}{m^2} \cdot  mh \cdot \frac{d}{dt} (t^2 f''(t)) \big|_{t= t_0} = \frac{qh}{m} (2 t_0 f''(t_0) + t_0^2 f'''(t_0)).
\end{align*}  
Using \eqref{e4} from Proposition \ref{Fejer} we get
$$
\left| g'' (m) \right| 
= \frac{qh}{m} |t_0 f''(t_0)| \left|2+ \frac{t_0 f'''(t_0)}{ f''(t_0)} \right| \gg \frac{qh}{M} \frac{|f'(t_0)|}{\log^2 X} \frac{1}{\log^2 X} \gg \frac{qh}{M \log^{13} X} =: \lambda
$$
and  
$$ \left| g'' (m) \right| \ll \frac{qh}{M} =: \alpha \lambda.$$
Apply Lemma \ref{lem2.3.1} with $k=1$ to the sum in \eqref{eqn10}:
\begin{align*}
A &\ll \frac{X_4}{MH} \sum_{h=1}^H \sum_{\frac{X_3}{M_1} \leq n, n+h \leq \frac{X_4}{M}} \Lambda(n) \Lambda(n+h) \left( M \sqrt{\frac{qh}{M}} +  M \sqrt{\frac{\log^{15} X \log M}{M}}\right) \\
&\ll \frac{X^2}{M} \log X \left( \sqrt{\frac{qH}{M}} + \sqrt{\frac{\log^{15} X \log M}{M}}\right) \ll \frac{X^2 \log^9 X}{M^{4/3}}.
\end{align*}
Also, $ B \ll \frac{X_3^2}{MH} \log X$. If $H = \frac{X_3}{M_1} \leq M^{1/3}$, then $1/H =\frac{M_1}{X_3} \leq \frac{1}{X_3} \frac{X_4}{u} \ll X^{-1/10}$ and if $H = M^{1/3}$, then $1/H \ll X^{-1/6}.$ So $ B \ll \frac{1}{M} X^{19/10} $. 
Then $$|S_6|^2 \ll M \log^3 X (A+B) \ll \frac{X^2 \log^{12} X}{M^{1/3}} + X^{19/10} \log^3 X  \ll \frac{X^2}{M^{1/6}}+ X^{19/10} \leq X^{23/12}.$$

If $M < \sqrt{X_4}$ we interchange the order of summation in \eqref{eqn7} and evaluate the sum over $n$ in \eqref{eqn14} using Lemma \ref{lem2.3.1} with $k=1$,  in exactly the same way as above, we obtain $|S_6|^2 \ll X_3^2  / N^{1/6} \leq X^{23/12}$.

Thus, we have $S_5 \ll X^{23/24} \log X.$ 
Combining this with $S_3 \ll X_4^{4/5}$ and $S_4 \ll X_4^{4/5}$, we get
$$S_2 \ll X^{4/5} + X^{23/24} \log X$$
and we obtain
$$S_1 \ll \frac{\log Q}{\log X} (X^{4/5} + X^{23/24} \log X ) + \sqrt{X} \leq \frac{\pi(X)}{Q}.$$ 
This proves Theorem \ref{thm1}.
\end{proof}

\begin{Theorem}
\label{thm2}
Let $f(x) \in {\bf U}$ be of the type $x^{l+}$ for some $l \geq 1$.
Then $(f(p))_{p \in \mathcal{P}}$ is u.d. mod 1.
\end{Theorem}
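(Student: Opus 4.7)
The plan is to mirror case (b) of the proof of Theorem \ref{thm1}, with two adjustments: the differencing order $k$ in Lemma \ref{lem2.3.1} is chosen to grow with $l$, and the polynomial-like derivative estimates of Proposition \ref{tempered} replace the logarithmic ones of Proposition \ref{Fejer}.

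By Lemma \ref{lemmaS} it suffices to show that with $Q = X^\delta$ for some small $\delta = \delta(l) > 0$, $X_0 = X/Q$, and any $1 \leq q \leq Q$ one has $S := \sum_{X_0 < p \leq X} e(qf(p)) \ll \pi(X)/Q$. I would use Lemma \ref{lem5} to replace the prime indicator by $\Lambda(n)/\log n$, subdivide into dyadic ranges $(X_3, X_4] \subset (X_3, 2X_3]$, and apply Vaughan's identity (Lemma \ref{lem7}) with $u = v = X^\eta$ for a small $\eta = \eta(l) > 0$, obtaining the same three sums $S_3, S_4, S_5$ as in Theorem \ref{thm1}.

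I would then apply Lemma \ref{lem2.3.1} with $k = l+1$, so that the $(k+1)$-th derivative of $f$ is a strictly negative power of $x$. Proposition \ref{tempered} gives $|f^{(j)}(x)| \asymp x^{\beta - j \pm \epsilon}$ for some $\beta \in [l, l+1]$, and \eqref{e5} yields $\alpha = O(x^\epsilon)$. For the Type I sums $S_3, S_4$, the $(k+1)$-th derivative of $qf(dm)$ in $m$ has size $\asymp q d^{k+1} X_3^{\beta-k-1}$, and \eqref{nov18} produces a power saving in $X_3$ provided $\delta, \eta$ are small enough in terms of $l$. For the Type II sum $S_5$, the Weyl--van der Corput inequality (Lemma \ref{lem6}) reduces the problem to bounding $\sum_m e(q[f(mn+mh) - f(mn)])$; writing $g^{(k+1)}(m) = (q/m^{k+1})[\phi(mn+mh) - \phi(mn)]$ with $\phi(t) := t^{k+1} f^{(k+1)}(t)$, the mean-value theorem together with \eqref{e5} gives $|\phi'(t)| \asymp t^{\beta-1}$, whence $|g^{(k+1)}(m)| \asymp qh X_3^{\beta-1}/M^k$ in the regime $m \asymp M$. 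Lemma \ref{lem2.3.1} then applies in the outer variable when $M \geq \sqrt{X_4}$, and symmetrically in the inner variable $n$ (after interchanging the order of summation) when $M < \sqrt{X_4}$, with the losses from $|a(m)|, |b(m)|$ absorbed via Cauchy--Schwarz and the estimates in Remark \ref{etc}.

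The main obstacle is the bookkeeping: one has to simultaneously balance $\delta, \eta, k$, and the Weyl--van der Corput parameter $H$ so that the savings from Lemma \ref{lem2.3.1} in all three sums exceed $X^{1-\delta}$ after the logarithmic losses. Because $\beta \geq l \geq 1$ the derivative bounds here are genuinely polynomial rather than poly-logarithmic, so once $k = l+1$ is fixed the savings are uniformly of the form $X^{-c(l)}$ for some $c(l) > 0$; any sufficiently small choice of $\delta, \eta$ depending on $l$ then closes the argument.
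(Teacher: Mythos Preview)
Your overall architecture is the same as the paper's: reduce via Lemma~\ref{lemmaS} and partial summation to $\sum \Lambda(n)e(qf(n))$, decompose by Vaughan, and treat the Type~I and Type~II sums with Lemma~\ref{lem2.3.1} using the derivative bounds of Proposition~\ref{tempered}. The paper takes $Q=\log X$ rather than $X^\delta$, but since the savings are polynomial your more aggressive choice is harmless.

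The genuine gap is your fixed differentiation order $k=l+1$ in the Type~II analysis. With $g(m)=q[f(m(n+h))-f(mn)]$ you correctly obtain $|g^{(k+1)}(m)|\asymp qh\,X_3^{\beta-1\pm\epsilon}/M^k$, so with $k=l+1$ the upper bound $\alpha\lambda$ in Lemma~\ref{lem2.3.1} is $\asymp qh\,X_3^{\beta-1+\epsilon}/M^{l+1}$. At the critical size $M\asymp\sqrt{X_3}$ and $\beta$ near $l+1$ this is $\asymp qh\,X_3^{(l-1)/2+\epsilon}$, which for $l\ge 2$ is a positive power of $X_3$ (and for $l=1$ is still $\gg 1$). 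The term $(\alpha\lambda)^{1/(2K-2)}$ in \eqref{nov18} is then $\ge 1$ and Lemma~\ref{lem2.3.1} gives nothing. In other words, your claim that ``the savings are uniformly of the form $X^{-c(l)}$'' fails precisely in the bilinear regime $M\approx N\approx\sqrt{X_3}$.

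The paper handles this by choosing the order of differentiation adaptively: on each dyadic block $(M,2M]$ it picks $j$ (hence $k=l+j-1$) so that $\alpha\lambda\ll X_3^{-\epsilon}$, which forces $j$ to grow as $M$ shrinks toward $\sqrt{X_3}$. Your scheme can be repaired in the same spirit, or more simply by taking a \emph{single} fixed $k>2l$ (say $k=2l+1$): then $\alpha\lambda\ll qh\,X_3^{\,l-k/2+\epsilon}\ll X_3^{-1/2+\epsilon}$ uniformly for $M\ge\sqrt{X_3}$, while $\lambda M^{k+1}\gg qhM X_3^{\beta-1-\epsilon}\gg X_3^{1/2-\epsilon}$ remains large, and the Type~I estimates are unaffected since any $k>l$ already suffices there.
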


\begin{proof}
By Lemma \ref{lemmaS}, it is enough to show that $Q(X) = \log X$ satisfies
\begin{equation*}
S := \sum_{X_0 < p \leq X} e(qf(p)) \ll \frac{\pi(X)}{Q},
\end{equation*}
where $X_0 = \frac{X}{Q}$.
Note that, by Lemma \ref{lem5}, 
$$S  = \sum_{X_0 < n \leq X} \frac{\Lambda(n)}{\log n} e(qf(n)) + O({\sqrt{X}}) \ll \frac{1}{\log X} \max_{X_2 \leq X} \left| \sum_{X_0 < n \leq X_2} \Lambda(n) e(qf(n)) \right| + \sqrt{X}.$$

\noindent Now let us estimate $\sum\limits_{X_0< n \leq X_2} \Lambda(n) e(qf(n))$. 
Since $X_0 = \frac{X}{Q}$ and $X_2 \leq X$, 
we can divide the interval $(X_0, X_2]$ into $\ll \log Q$ subintervals of the form $(X_3,  X_4] \subset (X_3, 2X_3]$ 
and evaluate the corresponding sum 
$$S_2 := \sum\limits_{X_3 < n \leq X_4} \Lambda(n) e(qf(n)).$$ 
By Lemma \ref{lem7} with $u=v= X^{\epsilon}$ for some small $\epsilon >0$, 
we have $|S_2| \leq |S_3| + |S_4| + |S_5|$, where $S_3, S_4$ and $S_5$ correspond to $T_1, T_2$ and $T_3$ in Lemma \ref{lem7}:
\begin{align}
\label{eqS3}
S_3 &= \sum_{d \leq u} \sum_{\frac{X_3}{d} \leq m \leq \frac{X_4}{d}} \mu(d) (\log m) e(q f(dm)), \\
\label{seS4}
S_4 &= \sum_{m \leq uv} \sum_{\frac{X_3}{m} \leq r \leq \frac{X_4}{m}} a(m) e(q f(mr)), \quad a(m) = \sum\limits_{d \leq u} \sum\limits_{\substack{n \leq v \\ dn=m}} \mu(d) \Lambda(n)\\
\label{eqS5}
S_5 &= \sum_{m > u} \sum_{\substack{\frac{X_3}{m} \leq n \leq \frac{X_4}{m} \\ v > n} } b(m) \Lambda(n) e(qf(mn)), \quad b(m) = \sum\limits_{\substack{d \leq u \\ d | m}} \mu(d). 
\end{align}
We will show that $|S_i| \ll X^{1- \epsilon_0}$ for some $\epsilon_0 > 0$, which implies that 
$$|S| \ll \frac{\log Q}{\log X} X^{1- \epsilon_0} \leq \frac{\pi(X)}{Q}.$$

\noindent Let us estimate $S_3$ first.
Let $\lambda_j := \left| \frac{\partial^{l+j}}{\partial m^{l+j}} q f (dm) \right| = qd^{l+j} |f^{(l+j)} (dm)|$. From Proposition \ref{tempered}, for $dm \in (X_3, X_4]$,
\begin{equation}
\label{lambda1}
qd^{l+j} X_3^{\beta-l-j-\epsilon} \ll \lambda_j \ll qd^{l+j} X_3^{\beta - l- j + \epsilon},
\end{equation}
where $\beta = \lim\limits_{x \rightarrow \infty} \frac{\log f(x)}{\log x} \in [l,l+1]$.
Choose $j$ such that $qd^{l+j} X_3^{\beta - j - l + \epsilon} \ll X_3^{-\epsilon}$. 
We will use Lemma \ref{lem2.3.1} with $k=l+j-1$, $\lambda = qd^{k+1} X_3^{\beta -k -1 -\epsilon}$ and $\alpha \lambda = qd^{k+1} X_3^{\beta - k-1 + \epsilon}$.  
Note that 
$$\alpha \lambda \ll X_3^{-\epsilon}, \quad \lambda \left( \frac{X_3}{d}\right)^{k+1} \gg X_3^{\epsilon} \quad \text{and} \quad \frac{\alpha \log^k X_3/d}{X_3/d} \ll X_3^{-\epsilon}.$$
Using Lemmas \ref{lem5} and \ref{lem2.3.1} we get: 
$$|S_3| \ll \sum_{d \leq u}  \log X_3 \frac{X_3}{d} X_3^{-\frac{\epsilon}{2K}} \ll X^{1 - \frac{\epsilon}{4K}}.$$

\noindent To evaluate $S_4$, we will use an argument similar to that utilized in evaluating $S_3$. Denote 
$$\mu_j := \left| \frac{\partial^{l+j}}{\partial r^{l+j}} (qf(mr)) \right| = qm^{l+j} |f^{(l+j)} (mr)|.$$ 
Then, similar to \eqref{lambda1},
\begin{equation}
 \label{mu1}
 qm^{l+j} X_3^{\beta - l - j  - \epsilon} \ll \mu_j \ll qm^{l+j} X_3^{\beta - l - j + \epsilon},
\end{equation}
so by chooising $j$ such that  $qm^{l+j} X_3^{\beta - j - l + \epsilon} \ll X_3^{-\epsilon}$ and $k = l+j-1$,
$$|S_4| \ll \sum_{m \leq uv} |a(m)| \frac{X_3}{m} X_3^{- \frac{\epsilon}{2K}} \ll \sum_{m \leq u v} \frac{|a(m)|}{m}  X_3^{1- \frac{\epsilon}{2K}} \ll X^{1- \frac{\epsilon}{4K}}.$$

\noindent In order to evaluate $S_5$, we will estimate $S_6$ as in the proof of Theorem \ref{thm1}. 
So we need to evaluate the sum in \eqref{eqn10} for $M \geq \sqrt{X_4}$ and the sum in \eqref{eqn14} for $M < \sqrt{X_4}$.

\noindent For the sum in \eqref{eqn10}, take $H =\frac{1}{2}  X_3^{{\epsilon}}$. 
Note that $\frac{X_3}{M_1} \geq \frac{X_4}{2 M_1} \geq \frac{1}{2}u \geq H$. 
Then
$$B \ll \sum_{M < m \leq M_1} \frac{X_3}{MH} \frac{X_3}{m} \log X_3 \ll \frac{X_3^2 \log X_3}{MH} \ll \frac{X_3^{2- \epsilon} \log X_3}{M} \ll \frac{X^{2- \epsilon} \log X}{M}.$$
To evaluate $A$, we need to estimate 
\begin{equation}
\label{evA}
\left| \sum\limits_{M < m \leq M_1} e(qf(mn+mh) - qf(mn)) \right|,
\end{equation}
where $n, n+h \in (\frac{X_3}{M_1}, \frac{X_4}{M}]$.
Using Proposition \ref{tempered}, we get
\begin{align}
\label{est lambda1j}
\lambda_{1,j} &:= \left| \frac{\partial^{l+j}}{\partial m^{l+j}} (qf(mn+mh) - qf(mn))\right| \notag \\
&= \frac{qh}{m^{l+j-1}} t_0^{l+j-1} \left| (l+j)f^{(l+j)}(t_0) + t_0 f^{(l+j+1)}(t_0) \right| \notag \\
&= \frac{qh}{m^{l+j-1}} t_0^{l+j-1} \left| f^{(l+j)} (t_0) \right|  \left| l+j + \frac{t_0 f^{(l+j+1)} (t_0)}{f^{(l+j)} (t_0)} \right|
\end{align}
for some $t_0 \in [mn, mn+mh]$, so
\begin{equation}
\label{lambda2}
\frac{qh}{M^{l+j-1}} X_3^{\beta- 1 -\epsilon} \ll \lambda_{1,j} \ll \frac{qh}{M^{l+j-1}} X_3^{\beta-1+ \epsilon}.
\end{equation}
Fix an integer $j$ such that $\frac{qh}{M^{l+j-1}} X_3^{\beta-1+ \epsilon} \ll X_3^{-\epsilon}$. 
We will evaluate \eqref{evA} via Lemma \ref{lem2.3.1} with $k=l+j-1$ 
and $\lambda =\frac{qh}{M^{l+j-1}} X_3^{\beta- 1 -\epsilon} $ and $\alpha \lambda = \frac{qh}{M^{l+j-1}} X_3^{\beta-1+ \epsilon}$. 
Note that $\alpha \lambda \leq X_3^{-\epsilon}$, $\lambda M^{k+1} \gg X_3^{\epsilon}$ and $\frac{\alpha \log^k M}{M} \ll X_3^{- \epsilon}$. 
Then, 
$$A \ll \frac{X_4}{MH} \sum_{h=1}^H \sum_{\frac{X_3}{M_1} \leq n , n+h \leq \frac{X_4}{M}} \Lambda(n) \Lambda(n+h) M X_3^{-\epsilon/2K} \ll \frac{X^{2 - \epsilon/2K} \log X}{M}.$$
Thus $|S_6|^2 \ll X^{2-\epsilon/2K} \log^4 X$.

For the sum in \eqref{eqn14} for $M < \sqrt{X_4}$, let  $N= \frac{X_3}{M_1}$ and $N_1 = \frac{X_4}{M}$. Take $H = \frac{1}{2} X_3^{\epsilon}$ so that 
$$D \ll \sum_{N < n \leq N_1} \frac{X_4}{NH} \frac{X_3}{n} \log^3 u \ll \frac{X_3^{2 - \epsilon} \log^3 X_3}{N} \ll \frac{X^{2-\epsilon} \log^3 X}{N}.$$
Denote
\begin{align}
\label{est mu1j}
\mu_{1,j} &:= \left| \frac{\partial^{l+j}}{\partial n^{l+j}} (qf(mn+mh) - qf(mn)) \right|
=  qm^{l+j} \cdot mh \, \left|f^{(l+j+1)} (t_0) \right| \notag \\
&= \frac{qh}{n^{l+j+1}} t^{l+j+1} \left| f^{(l+j+1)} (t_0) \right|,
\end{align}
for some $t_0 \in [mn, mn+mh]$. Thus,
\begin{equation}
\label{mu2}
\frac{qh}{N^{l+j+1}}X_3^{\beta -\epsilon} \ll \mu_{1,j} \ll \frac{qh}{N^{l+j+1}} X_3^{\beta +\epsilon}.
\end{equation}
Fix $j$ such that $\frac{qh}{N^{l+j+1}} X_3^{\beta + \epsilon} \ll X_3^{-\epsilon}$. 
Applying Lemma \ref{lem2.3.1} with $k=l+j -1$ as above to evaluate $A$, we also get 
$$C \ll \frac{X^{2-\epsilon/2K} \log X}{N}.$$
Thus $|S_6|^2 \ll X^{2-\epsilon/2K} \log^4 X$, so $|S_5| \ll X^{1- \epsilon/8K}$.
\end{proof}


\begin{Theorem}
\label{thm3}
Let $f_1(x) \in {\bf U}$ is a subpolynomial function such that 
\begin{itemize}
\item $f_1$ is of the type $x^{0+}$ and $\lim\limits_{x \rightarrow \infty} \left| \frac{f_1(x)}{\log x}\right| = \infty$ or
\item $f_1$ is of the type $x^{l+}$ for some $l \geq 1$.
\end{itemize}
Let $f(x) = f_1(x) + f_2(x)$, where $f_2(x) \in \mathbb{R}[x]$ is a polynomial. Then $(f(p))_{p \in \mathcal{P}}$ is u.d. $\bmod \, 1$.
\end{Theorem}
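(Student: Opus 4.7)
Let $d = \deg f_2$, so that $f^{(k+1)}(x) = f_1^{(k+1)}(x)$ for every integer $k \geq d$. The plan is to replay the arguments of Theorems \ref{thm1} and \ref{thm2}: whenever Lemma \ref{lem2.3.1} is invoked, the Weyl--van der Corput depth $k$ is taken to be at least $d$, so that the higher-order derivative estimates for $f$ reduce to those for $f_1$ supplied by Propositions \ref{Fejer} and \ref{tempered}. As before, Lemma \ref{lemmaS}, partial summation, and Vaughan's identity (Lemma \ref{lem7}) reduce the task to bounding Type I and Type II sums of exactly the shape $S_3,\,S_4,\,S_5$ encountered in the proofs of Theorems \ref{thm1} and \ref{thm2}.

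When $f_1$ is of type $x^{l+}$ with $l \geq 1$ one imitates the proof of Theorem \ref{thm2}, but with the parameter $j$ (for which $k = l+j-1$) now required to satisfy both the original size conditions and the extra requirement $j \geq \max(1,\,d-l+1)$. Since $l+j \geq d+1$, the derivative quantities $\lambda_j,\,\mu_j,\,\lambda_{1,j},\,\mu_{1,j}$ in \eqref{lambda1}, \eqref{mu1}, \eqref{lambda2}, \eqref{mu2} reduce to bounds on $f_1^{(l+j)}$ and $f_1^{(l+j+1)}$ supplied by Proposition \ref{tempered}, and the remaining polylogarithmic summation goes through verbatim.

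When $f_1$ is of type $x^{0+}$ with $|f_1(x)/\log x| \to \infty$, one follows the proof of Theorem \ref{thm1} and replaces every use of Lemma \ref{lem3} by Lemma \ref{lem2.3.1} with depth $k = \max(1,d)$. Iterating the estimates \eqref{e3}--\eqref{e4} of Proposition \ref{Fejer} gives, on a dyadic interval $(X_3,\,2X_3]$,
\begin{equation*}
\frac{|f_1'(x)|}{x^k \log^{2k} x} \,\ll\, |f_1^{(k+1)}(x)| \,\ll\, \frac{|f_1'(x)|}{x^k},
\end{equation*}
which is precisely the pair $(\lambda,\,\alpha\lambda)$ required by \eqref{nov18}. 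The preliminary splits in the proof of Theorem \ref{thm1} (first $S = S_0 + S_1$ according to whether $|f_1'(n)|$ lies below or above $\log^{15}X/X$, and then the split of $S_1$ into cases (a) and (b) according to $|f_1'(X_3)|$) are carried out with respect to $f_1'$ rather than $f'$, since the polynomial contribution is absorbed into the higher-order differentiation; on each piece the iterated Weyl--van der Corput estimates then deliver the desired bound by a direct transcription of the calculations, with implied constants depending on $d$.

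The main obstacle is the re-optimization required in the second case. The auxiliary sums $A,\,B,\,C,\,D$ that arise in bounding $S_6$ are governed by competing scales involving $H$, $M$ and $\lambda$, all of which shift when $k=1$ is replaced by $k = \max(1,d)$. One must verify case by case (for $M \geq \sqrt{X_4}$ and $M < \sqrt{X_4}$, and within each for the two size regimes of $|f_1'(X_3)|$) that the resulting bounds still combine to $|S| \ll \pi(X)/Q$; this amounts to a lengthy but routine rerun of the optimizations on pp.\ 19--24 of the proof of Theorem \ref{thm1} with the extra powers of $\log X$ tracked.
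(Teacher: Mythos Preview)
Your treatment of the case $l \geq 1$ is correct and coincides with the paper's: choosing $j \geq \max(1, t-l+1)$ (where $t = \deg f_2$) kills the polynomial at the level of $(l+j)$-th derivatives, and the estimates of Theorem~\ref{thm2} go through unchanged.

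The $l=0$ case, however, has a genuine gap. Your plan---follow Theorem~\ref{thm1} but replace each use of Lemma~\ref{lem3} by Lemma~\ref{lem2.3.1} at depth $k=\max(1,t)$---only works when $|f_1'(x)|$ is large, specifically when $x|f_1'(x)| \geq \log^{C} x$ for a constant $C$ depending on $t$. This is precisely the paper's Case~I, where it takes $C = 9\cdot 2^{t} + 3t + 4$. The reason is that after iterating \eqref{e4} one has $|f_1^{(k+1)}(x)| \gg |f_1'(x)|/(x^{k}\log^{2k}x)$, so the crucial quantity $\lambda M^{k+1}$ in \eqref{nov18} is of order $M|f_1'(M)|/\log^{2k+1}M$; for $(\lambda M^{k+1})^{-1/K}$ to give any saving you need $x|f_1'(x)|$ to dominate a fixed power of $\log x$. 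Take for instance $f_1(x)=(\log x)^{2}$ and $f_2(x)=\alpha x$ with $\alpha$ irrational: then $x|f_1'(x)|=2\log x$, and at depth $k=1$ one gets $\lambda X^{2} \asymp \log X$, so the middle term of \eqref{nov18} is $\asymp (\log X)^{-1/2}(\log X)^{1/2}=1$ and Lemma~\ref{lem2.3.1} yields nothing. There is also a more elementary failure: the treatment of $S_0$ in Theorem~\ref{thm1} uses partial summation with $a_n=e(qf(n))$ against the prime-counting error $R(n)$, and the total variation $\sum|a_n-a_{n+1}|\asymp q\sum|f'(n)|$ is now of polynomial size in $X$ because of $f_2'$, so the error $R\sum|a_n-a_{n+1}|$ explodes.

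The paper therefore introduces a genuinely different argument for Case~II ($|f_1'(x)|\ll\log^{c} x/x$). When all coefficients of $f_2$ are rational with common denominator $r$, one splits primes into residue classes $p\equiv a\pmod r$; on each class $qf_2(p)$ is constant $\bmod\,1$, and the remaining sum $\sum_{p\equiv a} e(qf_1(p))$ is handled exactly as $S_0$ in Theorem~\ref{thm1} via Siegel--Walfisz (Lemma~\ref{SW}) and Kusmin--Landau. When $f_2$ has an irrational coefficient, the paper invokes Vinogradov's estimate (Lemma~\ref{lem2.3.3}): either the coefficients of $f_2$ lie on a ``minor arc'' and Lemma~\ref{lem2.3.3} bounds $\sum_{p} e(qf_2(p))$ directly (after which partial summation with the slowly varying $e(qf_1(n))$ transfers the bound to $f$), or they lie on a ``major arc'' and a Siegel--Walfisz argument reduces the prime sum to an integer sum, which is then small by Boshernitzan's theorem. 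None of this is a routine rerun of Theorem~\ref{thm1}; it is a separate argument that your proposal does not supply.
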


\begin{proof} 
We assume that $f_2(0)=0$ and write $f_2(x) = \alpha_1 x + \cdots + \alpha_t x^t \in \mathbb{R}[x]$.
If $t \leq l$, then $f(x) \in {\bf U}$ is of the type $x^{l+}$, so the result follows from Theorems \ref{thm1} and \ref{thm2}. 
So we also assume that $t \geq l+1$. 

Let us consider first $l \geq 1$. 
Note that for $j \geq t - l + 1$, $f_2^{(j+l)}(x) =0$. 
Moreover by choosing $j$ appropriately as in the proof of Theorem \ref{thm2} with this additional condition, 
we get the same estimates for $\lambda_j, \mu_j, \lambda_{1,j}, \mu_{1,j}$ as in \eqref{lambda1}, \eqref{mu1}, \eqref{lambda2} and \eqref{mu2}. 
(Here $\beta = \lim_{x \rightarrow \infty} \frac{\log f_1(x)}{\log x}$.) 
Hence, we can evaluate $S_3, S_4$ and $S_5$ as in the proof of Theorem \ref{thm2}, which concludes the proof for $l \geq 1$.

Now consider $l=0$. 
We will consider two cases:

\noindent Case I. Suppose that $|f_1'(x)| \geq \frac{\log^c x}{x}$ eventually, where $c \geq 9 \cdot 2^t + 3t +4$. 
As in the proof of Theorem \ref{thm2}, with $u=v= X^{\epsilon}$ for some small $\epsilon>0$, 
we will show that $S_3, S_4$ and $S_5$ in \eqref{eqS3} - \eqref{eqS5} are $O\left( \frac{X}{\log^2 X} \right)$.

\noindent Since $\frac{1}{\log^2 x} \ll \left| \frac{x f_1^{(j+1)}(x)}{f_1^{(j)}(x)} \right| \ll 1$, from \eqref{e4}, we have 
$$ \frac{|f_1'(x)|}{x^j \log^{2j} x} \ll |f_1^{(j+1)}(x)| \ll \frac{|f_1'(x)|}{x^j}.$$
From \eqref{e1}, for $x \in (X_3, X_4] \subset (X_3, 2X_3]$,
$$ \frac{|f_1'(X_3)|}{X_3^j \log^{2j +1} X_3} \ll |f_1^{(j+1)}(x)| \ll \frac{|f_1'(X_3)|}{X_3^j}.$$

\noindent To evaluate $S_3$, we need to estimate $\lambda_j:= \left| \frac{\partial^j}{\partial m^j} (qf(dm))\right| = qd^j |f_1^{(j)}(dm)|$ for $j=t + 1 \geq 2$:
$$ \frac{qd^j |f_1'(X_3)|}{X_3^{j-1} \log^{2j-1} X_3} \ll \lambda_j \ll \frac{qd^j |f_1'(X_3)| }{X_3^{j-1}}.$$
We will apply Lemma \ref{lem2.3.1} with 
$$ k=j-1 (=t), \quad \lambda = \frac{qd^{k+1} |f_1'(X_3)|}{X_3^{k} \log^{2k+1} X_3} \quad \text{ and} \quad \alpha \lambda = \frac{qd^{k+1} |f_1'(X_3)| }{X_3^{k}}.$$ 
Note that $\alpha \lambda \ll X_3^{-\epsilon}$, $\lambda \left(\frac{X_3}{d}\right)^{k+1} \gg \log^{c-(2k+1)} X_3$ and $\frac{\alpha \log^k X_3/d}{X_3/d} \ll X_3^{-\epsilon}$. 
Thus,
\begin{align*}
|S_3| &\ll \sum_{d \leq X_3^{\epsilon}}  \log X \left[ \frac{X_3}{d} \log^{(-c+3k+1)/K} X_3\right] \\
         &\ll X_3 \log^2 X \log^{(-c+3k+2)/K} X_3 \ll \frac{X}{\log^2 X}.
\end{align*}

\noindent We can evaluate $S_4$ similarly: 
for $j=t+1$, denote $\mu_j := \left| \frac{\partial^j}{\partial r^j} (qf(mr)) \right| = qm^j |f_1^{(j)} (mr)|$. 
Then
$$ \frac{qm^j |f_1'(X_3)|}{X_3^{j-1} \log^{2j-1} X_3} \ll \mu_j \ll \frac{qm^j |f_1'(X_3)| }{X_3^{j-1}}.$$
Applying Lemma \ref{lem2.3.1} with 
$$k=j-1 (= t), \quad \lambda = \frac{qm^{k+1} |f_1'(X_3)|}{X_3^{k} \log^{2k+1} X_3} \quad \text{and} \quad \alpha \lambda = \frac{qm^{k+1} |f_1'(X_3)| }{X_3^{k}},$$ 
we have 
$$|S_4| \ll \sum_{m \leq uv} |a(m)| \frac{X}{m} \log^{(-c + 3k +1)/K} X_3 \ll X \log^3 X \log^{(-c + 3k +1)/K} X_3 \ll \frac{X}{\log^2 X}.$$

\noindent To evaluate the sum $S_5$, we need to estimate $S_6$ as in the proof of Theorem \ref{thm1}:
to evaluate the sum in \eqref{eqn10} for $M \geq \sqrt{X_4}$ and the sum in \eqref{eqn14} for $M < \sqrt{X_4}$.

\noindent For $M \geq \sqrt{X_4}$, take $H= \frac{X_3}{M_1}$. Then from \eqref{eqn10}, $B \ll X \log X$.
To evaluate $A$, for $j=t+1$, denote
$$ \lambda_{1,j} := \left| \frac{\partial^j}{\partial m^j} (qf(mn+mh) - qf(mn)) \right|.$$
Then, from \eqref{est lambda1j} in the proof of Theorem \ref{thm2},
$$ \frac{qh}{M^{j-1}} \frac{|f_1'(X_3)|}{\log^{2j+1} X_3} \ll \lambda_{1,j} \ll \frac{qh}{M^{j-1}} |f_1'(X_3)|. $$
We apply now Lemma \ref{lem2.3.1} with 
$$k = j -1 (= t), \, \lambda = \frac{qh}{M^{k}} \frac{|f_1'(X_3)|}{\log^{2k+3} X_3} \,\, \text{ and} \,\, \alpha \lambda = \frac{qh}{M^{k}} |f_1'(X_3)|.$$ 
Note that 
$$\alpha \lambda \ll X_3^{-\epsilon}, \quad \frac{\alpha \log^k M}{M} \ll X_3^{-\epsilon} $$
$$ \text{and} \,\, (\lambda M^{k+1})^{-1/K} (\log M)^{k/K} \ll \left( \frac{X_3}{h M} \right)^{1/K} \log^{-9} X_3.$$
Hence
\begin{align*}
A &\ll \frac{X_4}{MH}\sum_{h=1}^H \left( \frac{1}{h} \right)^{1/K} \sum_{\frac{X_3}{M_1} < n, n+h \leq \frac{X_4}{M}} \left(M \left(\frac{X_3}{M}\right)^{1/K} \log^{-9} X_3 \right)\\
   &\ll \frac{X_4}{MH} H^{1- 1/K} \frac{X_3}{M_1} \log X_3 \left(M \left(\frac{X_3}{M}\right)^{1/K} \log^{-9} X_3 \right) \ll \frac{X^2}{M} \log^{-8} X,
\end{align*}
thus in this case
$$|S_6|^2 \ll M \log^3 u |A+B| \ll M \log^3 X |A+B| \ll X^2 \log^{-8} X.$$

\noindent Similarly, for $M < \sqrt{X_4}$, take $H = M/2 = \frac{X_3}{N_1}$. 
Then from \eqref{eqn14} $D \ll X \log^3 X.$
To evaluate $C$, for $j=t+1$, denote
$$ \mu_{1,j} := \left| \frac{\partial^j}{\partial n^j} (qf(mn+mh) - qf(mn)) \right|.$$
Then, from \eqref{est mu1j} in the proof of Theorem \ref{thm2},
$$ \frac{qh}{N^{j+1}} X_3 \frac{|f_1'(X_3)|}{\log^{2j+1} X_3} \ll \mu_{1,j} \ll \frac{qh}{N^{j+1}} X_3 |f_1'(X_3)|.$$
Applying again Lemma \ref{lem2.3.1} with $k = j -1 (= t)$, we get 
$$ \lambda= \frac{qh}{N^{k+2}} X_3 \frac{|f_1'(X_3)|}{\log^{2k+3} X_3} \,\, \text{and} \,\,  \alpha \lambda= \frac{qh}{N^{k+2}} X_3 |f_1'(X_3)|  \quad \text{for} \, \, C.$$
Note that
$$\alpha \lambda \ll X_3^{-\epsilon}, \quad \frac{\alpha \log^k N}{N} \ll X_3^{-\epsilon}$$
 $$\text{and} \,\,\, (\lambda N^{k+1})^{-1/K} (\log N)^{k/K} \ll \left(\frac{N}{h} \right)^{1/K} \log^{-9} X_3,$$
thus 
$$C \ll \frac{X_4}{NH} H^{1-1/K} \frac{X_3}{N_1} \log^3 X_3 \left(N^{1+1/K} \log^{-9} X_3 \right)$$
so in this case,
$$|S_6|^2 \ll N \log N (C+D) \ll X^2 \log^{-5} X.$$
Hence we have $|S_5| \ll X \log^2 X$. 
So we are done with Case I.

\noindent Case II. Assume that $|f_1'(x)| \ll \frac{\log^c x}{x}$ eventually for some $c >0$. 


 If all coefficients of $f_2(x)$ are rational, write $f_2(x) = \frac{1}{r} \sum_{j=1}^t a_j x^j$, where $a_1, \dots, a_k, r$ are integers. 
From Lemma \ref{lemmaS}, we will prove that 
\begin{equation}
\label{eqnS}
\left|\sum_{X_0 < p \leq X} e(qf_1(p) + qf_2(p)) \right| \ll \frac{\pi(X)}{Q},
\end{equation}
where $Q=Q(X) \leq \log X$ is a positive unbounded increasing function with $Q(X) \leq X |f_1'(X)|$. 
Note that
$$\left|\sum_{X_0 < p \leq X} e(qf_1(p) + qf_2(p)) \right| \ll \sum_{\substack{a=1 \\ (a,r)=1}}^r \left|\sum_{\substack{X_0 < p \leq X \\ p \equiv a \bmod \, r}} e(qf_1(p)) \right|.$$
Write $$S(a) := \left|\sum_{\substack{X_0 < p \leq X \\ p \equiv a \bmod \, r}} e(qf_1(p)) \right| = \sum_{X_0 < n \leq X} \Lambda_1(n,r,a) e(qf_1(n)).$$

\noindent Applying Lemma \ref{SW} and arguing as in the proof of the estimate for $S_0$ in $\eqref{S_0}$ above we obtain
$$|S(a)| \ll \frac{1}{\log X} \frac{1}{q |f_1'(X)|} + \frac{\pi(X)}{Q} \ll \frac{\pi(X)}{Q}.$$
So this takes care of the case when all coefficient of $f_2(x)$ are rational.  

 Now it remains to consider the case when at least one coefficient of $f_2(x)$ is irrational. We will show that for any given non-zero integer $q$,
\begin{equation}
\label{eq:limit}
 \lim_{X \rightarrow \infty} \frac{1}{\pi(X)} \left| \sum_{X_0 < p \leq X} e(qf_1(p) + qf_2(p))\right| = 0,
\end{equation}
which obviously implies
\begin{equation*}
 \lim_{X \rightarrow \infty} \frac{1}{\pi(X)} \left| \sum_{p \leq X} e(qf_1(p) + qf_2(p))\right| = 0.
\end{equation*}

\noindent For any positive integer $Y$, write all coefficients of $f_2(x)$ in the form 
\begin{equation}
\label{eq2.3.1}
\begin{split}
\alpha_j = \frac{e_j}{q_j} + z_j \quad \text{with} \quad (e_j, q_j) =1, \quad 0 < q_j \leq \tau_j := Y^{j/2} \\
|z_j| \leq \frac{1}{q_j \tau_j}, \quad z_j = \delta_j Y^{-j} \quad (j=1,2, \dots, t).
\end{split}
\end{equation}
Denote 
\begin{equation}
\label{deltaQ}
\delta_0 = \max \{ |\delta_1|, \dots , |\delta_t| \} \quad \text{ and} \quad \mathcal{Q}=[ q_1,\dots , q_t ].
\end{equation}
Assume first that there exists some $Y$ with $X_0 \leq Y \leq X$, $(X_0 = \frac{X}{\log X})$, such that 
\begin{equation}
\label{eq3.26} 
\delta_0(Y) \leq \exp \left( ( \log X)^{1/3} \right) \quad \text{ and} \quad \mathcal{Q}(Y) \leq \exp \left( (\log \log X)^3 \right).
\end{equation}
To simplify our notation, we use $\delta_0, \mathcal{Q}$ instead of $\delta_0(Y)$ and $\mathcal{Q}(Y)$.
Also, denote by $\sideset{}{^*}\sum$ the sum over all $a$ from the reduced residue system modulo $\mathcal{Q}$. We obtain
\begin{equation*}
\label{eq3.27} 
S = \sum_{X_0 < p \leq X}  e(q f_1(p) + q f_2(p)) = \sideset{}{^*}\sum \sum_{X_0 <  n \leq X} \Lambda _1 (n,\mathcal{Q},a) e(q f_1(n) + q f_2(n)).
\end{equation*}
Denoting $h(n) = \sum_{j=1}^t  z_ j n^j$, we get 
\begin{equation}
\label{eq3.28}
 S = \sideset{}{^*}\sum_a  e \left( \sum_{j=1}^k \frac{ e_ j}{ q _ j} a^j \right) \sum_{X_0 <n \leq X} \Lambda_1(n,\mathcal{Q},a) e(qf_1(n)+ qh(n)).
\end{equation} 

\noindent Now we will evaluate $\sum_{n=X_0}^{X_1} \Lambda_1(n,\mathcal{Q},a) e(qf_1(n)+ qh(n))$  
using the second part of Lemma \ref{lem5} with $b_n = \Lambda_1 (n,\mathcal{Q},a)$ 
and $a_n =e(q f_1 (n) + q h( n ) )$. Lemma \ref{lem2} implies that 
$$ \sum_{n \leq x} b_n = \frac{1}{\phi (\mathcal{Q})}  Li(x) - E \, \chi(a) \int_2^x  \frac{u^{\beta_1 -1}}{\log u} \, d u  + R(x) $$
with $|R(x)| \ll X \exp( - C ( \log X)^{1/2})=R$. Also, $|a_n|=1$ and 
\begin{align*}
\sum_{n \leq X} |a_n - a_{n+1}| &= \sum_n | e(q f_1(n)- q f_1(n+1) + q h(n) - q h(n+1) ) - 1 | \\
                                   &\ll \sum_n  q | f_1(n)-f_1(n+1)| + q |h(n) - h(n+1) | \\
                                   &\ll q (\log X)^{c+1}  + q \sum  |z_ j | X^{j} \ll q (\log X)^{c+1} + q \sum |z_j| Y^j \log^j X\\
                                   &\ll q ((\log X)^{c+1} + \delta_0 \log^t X)  \ll  \exp( (\log X)^{\frac{1}{3} + \epsilon_1})
\end{align*}
for some small $\epsilon_1 >0$.
Using this and Lemma \ref{lem5} we obtain
\begin{equation*}
\begin{split} \sum a_n b_n  = &\frac{1}{\phi(\mathcal{Q})} \sum_n \left( Li(n) - Li (n-1) - E \chi(a) \int_{n-1}^n \frac{u^{\beta_1-1}}{\log u}\, du \right) e(qf_1(n)+qh(n)) \\
                                                &+ O \left( X \exp \left( ( \log X)^{\frac{1}{3} + \epsilon_1} - C( \log X)^{1/2} \right) \right). 
\end{split}
\end{equation*}
Here $Li(n) - Li(n-1) = \frac{1}{\log n}+ O \left(\frac{1}{ n \log n}\right)$ and 
$\int_{n-1}^n \frac{u^{\beta_1-1}}{\log u} \, du = \frac{n^{\beta_1-1} } {\log n}  + O \left( \frac{1}{n \log n} \right).$
So, we  get  
\begin{align}
\label{above}
&\sum  a_ n  b_ n  = \frac{1}{\phi (\mathcal{Q})}  \sum_n \frac{1}{ \log n} ( 1 - E \chi(a) n^{\beta_1 - 1}) \, e(q f_1(n) + q h(n) )  + O \left( X \exp \left( - \frac{C}{2} ( \log X)^{1/2} \right) \right) \notag \\
&= \frac{1}{\phi (\mathcal{Q})}  \sum_n \frac{1}{ \log n} e(q f_1(n) + q h(n) ) - \frac{E \chi(a)}{\phi (\mathcal{Q})}  \sum_n \frac{n^{\beta_1 - 1}}{ \log n} e(q f_1(n) + q h(n) )  \\
  &+ O \left( X \exp \left( - \frac{C}{2} ( \log X)^{1/2} \right) \right). \notag
\end{align}

\noindent By using the definition of $h(n)$ above, we can write 
$$f_2(n) - h(n)  = \sum_{j=1}^t \frac{e_ j}{ q _ j} a^j \,  (\bmod \, 1) \quad \text{ for} \quad n   = a \, (\bmod \, \mathcal{Q}).$$
Using summation by parts on each of the sums in \eqref{above} above and putting the obtained expression into \eqref{eq3.28}, we obtain
\begin{align}
\label{eq3.34}
 S &\ll \frac{1}{\log X} \max_{ X_2 \in (X_0, X ]}  \left| \sum_{n \in (X_0 , X_2]}  e(q f_1(n) + q f_2(n) ) \right| +  \frac{X}{ \log^2 X} \notag \\
    &\ll \frac{1}{\log X} \left( \frac{X}{\log X} + \max_{X_2 \in (X_0, X ]} \left| \sum_{n \leq X_2}  e(q f_1(n) + q f_2(n) ) \right| \right) + \frac{X}{ \log^2 X}.
\end{align}
\noindent  By Boshernitzan's theorem, 
 $$ \lim_{Y \rightarrow \infty} \frac{1}{Y} \left| \sum_{n \leq Y} e(qf_1(n) + qf_2(n))\right| = 0,$$
therefore, from \eqref{eq3.34} 
$$\lim_{X \rightarrow \infty} \frac{1}{\pi(X)} \left| \sum_{X_0 < p \leq X} e ( q f_1(p) + q f_2(p) ) \right| = \lim_{X \rightarrow \infty} \left| \frac{S}{\pi(X)} \right| =0.$$


Finally, assume that for each $Y$ with $ X_0 \leq Y \leq X$, ($ X_0 = \frac{X}{\log X}$), 
\begin{equation}
\label{eq3.36}
\delta_0(Y) \geq \exp ( (\log X)^{1/3} ) \quad \text{ or} \quad  \mathcal{Q}(Y) \geq \exp ( ( \log \log X)^3).
\end{equation}
For \eqref{eqnS}, we use the partial summation formula with $a_n = e(qf_1(n))$ for $n \geq X_0$ and $a_n=0$ for $n < X_0$ and $b_n = \Lambda_1(n) e(q f_2(n))$:
\begin{equation*}
 S = | \sum_p e ( q f_1(p) + q f_2(p) )| \ll  qX \max_{n \in [X_0, X]} |f_1(n+1)- f_1(n)| \max_{X_2 \leq X}  \left| \sum_{p \leq  X_2}  e(q f_2(p)) \right|.
\end{equation*}
Now we use Lemma \ref{lem2.3.3} to  evaluate the last exponential sum. 
Since $f_2(x)  = f_2(x) + x^{12} \, \bmod \, 1$, we can assume that $t \geq 12$. 
We claim that 
\begin{equation}
\label{eq3.37}
\Delta_1  \ll \exp(-( \log \log X)^3 / (3t) ).
\end{equation}
For $\mathcal{Q} \leq  X^{\nu/5}$ and $\delta_0 \leq X^{\nu}$ (recall that $\nu = 1/t$), 
if $\delta_0 \geq 1$, then $\Delta_1= \delta_0^{-\nu/2} \mathcal{Q}^{-v/2 + \epsilon_0}$ satisfies \eqref{eq3.37} 
and if $\delta_0 <1$, for $m \leq X$, $\Delta_1 = (m, \mathcal{Q})^{\nu/2} \mathcal{Q}^{-\nu/2 +\epsilon_0}$ satisfies \eqref{eq3.37} too. 
For $\mathcal{Q} \geq  X^{\nu/5}$ or $\delta_0 \geq X^{\nu}$  the claim also follows from $X^{\rho} \gg X^{ 1/(20t^3)}$. 
From \eqref{eq3.37}, $(\Delta_1)^{-2} \geq \log X$ for sufficiently large $X$, so we can estimate  $\left| \sum_{p \leq  X_2}  e(q f_2(p)) \right|$ via Lemma \ref{lem2.3.3}.
Using this and $$q \max_{n \in[X_0, X_1]} |f_1(n+1) - f_1(n)| \ll q f_1'(X_0) \ll \frac{\log^{c+2} X}{X},$$
we get  
\begin{equation*}
S \ll X (\log X)^{c+2} \exp \left( - ( \log \log X)^3 / (3t) + (\log \log X)^2 / \log (1+ \epsilon_0) \right) \ll \frac{\pi(X)}{ \log X}.
\end{equation*}
 This completes the proof.
\end{proof}

\begin{Theorem} 
\label{thm4}
 Let $f_1(x)$ be a $C^1$-function such that for some $C >0$, $|x f_1 ' (x)| \leq C$ for all $x$ 
and let $f_2 (x) = \alpha_1 x + \cdots + \alpha_k x^k$ be a polynomial with at least one irrational coefficient. 
Put $f(x)=f_1(x)+ f_2(x)$. Then $(f(p))_{p \in \mathcal{P}}$ is u.d. $\bmod \, 1$.
\end{Theorem}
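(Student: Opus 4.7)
The strategy follows the blueprint of Case~II of the proof of Theorem~\ref{thm3}, with one crucial modification to accommodate the fact that $f_1$ is no longer required to lie in a Hardy field. By Weyl's criterion it suffices to show that $\sum_{p \leq X} e(qf(p)) = o(\pi(X))$ for each fixed nonzero $q \in \mathbb{Z}$. Before attacking the sum over primes, I would first establish the purely integer counterpart that $(f(n))_{n \in \mathbb{N}}$ is u.d.\ $\bmod\,1$. Since $f_2$ has an irrational non-constant coefficient, Weyl's classical polynomial theorem gives $B(N) := \sum_{n \leq N} e(qf_2(n)) = o(N)$; Abel summation against $a_n = e(qf_1(n))$ then transfers the cancellation, because the hypothesis $|xf_1'(x)| \leq C$ yields $|a_{n+1} - a_n| \ll q/n$, whence $\sum_{n \leq N} |a_{n+1} - a_n|\,|B(n)| = o(N)$ by a standard $\varepsilon$-argument.

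With this u.d.\ statement in hand, the argument of Case~II.b of Theorem~\ref{thm3} adapts essentially verbatim. Apply Dirichlet's approximation at scale $Y \in [X_0, X]$ (with $X_0 = X/\log X$) to the coefficients of $f_2$ as in \eqref{eq2.3.1}, producing the parameters $\delta_0(Y)$ and $\mathcal{Q}(Y)$, and split into two sub-cases. In the ``good-$Y$'' sub-case---where some $Y \in [X_0,X]$ satisfies both $\delta_0(Y) \leq \exp((\log X)^{1/3})$ and $\mathcal{Q}(Y) \leq \exp((\log\log X)^3)$---I would decompose $S$ by reduced residues modulo $\mathcal{Q}$ and apply Siegel--Walfisz (Lemma~\ref{SW}) together with Lemma~\ref{lem5}; the key derivative sum $\sum_{n \leq X} |a_{n+1} - a_n| \ll q\log X + q\delta_0 \log^t X \ll \exp((\log X)^{1/3 + \epsilon_1})$ is even smaller than the corresponding bound in Theorem~\ref{thm3}, and reduces $|S|$ to $(\log X)^{-1} \max_{X_2} |\sum_{n \leq X_2} e(qf(n))| + O(X/\log^2 X)$, which is $o(\pi(X))$ by the auxiliary u.d.\ statement just established. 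In the complementary ``bad-$Y$'' sub-case, a single partial summation with $a_n = e(qf_1(n))$ and $b_n = \Lambda_1(n) e(qf_2(n))$ yields
$$|S| \ll qC\log X \cdot \max_{X_2 \leq X} \left|\sum_{p \leq X_2} e(qf_2(p))\right|,$$
and Lemma~\ref{lem2.3.3} (applied after adjoining $x^{12}$ to $f_2$ if necessary so that $t \geq 12$), combined with the bound $H\Delta_1 \ll_A (\log X)^{-A}$ established in \eqref{eq3.37} of the preceding proof, bounds the inner sum by $o(X/\log^2 X)$, giving $S = o(\pi(X))$.

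The main obstacle---and essentially the only place where this theorem requires new input relative to Theorem~\ref{thm3}---is the preliminary u.d.\ statement for $(f(n))_n$: since $f_1$ is not assumed to lie in ${\bf U}$, Boshernitzan's theorem is unavailable, and one must obtain this u.d.\ directly from Weyl's polynomial theorem by Abel summation, leveraging the strong hypothesis $|xf_1'(x)| \leq C$. Every other estimate is at least as good as in Theorem~\ref{thm3}, since $|xf_1'(x)| \leq C$ is stronger than the $|xf_1'(x)| \ll \log^c x$ assumption used there, so no genuinely new technical difficulties arise.
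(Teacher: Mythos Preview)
Your approach is correct. It parallels the paper's three-case split (your ``bad-$Y$'' subsumes the paper's cases (1) and (2), your ``good-$Y$'' is case~(3)), but diverges in how the good-$Y$ case is finished. After reaching the reduction $|S| \ll (\log X)^{-1}\max_{X_1}|S_1| + X/\log^2 X$ with $S_1 = \sum_{X_0 < n \leq X_1} e(qf(n))$ (the paper's \eqref{eq0915}), you dispose of $S_1$ \emph{qualitatively} by first proving u.d.\ of $(f(n))_{n\in\mathbb{N}}$ via Weyl's polynomial theorem plus Abel summation. The paper instead estimates $S_1$ \emph{quantitatively}: assuming the leading coefficient $\alpha_k$ is irrational with Dirichlet denominator $q_k$, it applies Weyl--van der Corput differencing (Lemma~\ref{lem6}) $k-1$ times and then Kuzmin--Landau (Lemma~\ref{lem3}) to obtain $S_1 \ll X\,q_k^{-1/4K}$. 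The quantitative form is forced because the paper works through Lemma~\ref{lemmaS}, which requires bounds uniform in $q \leq Q(X)$ with $Q(X)\uparrow\infty$; your fixed-$q$ Weyl-criterion framing sidesteps that, at the cost of any discrepancy estimate. Your route is in fact closer in spirit to the shortcut sketched in the Remark following Theorem~\ref{thm4} (which black-boxes Rhin's theorem); you black-box only Weyl's integer theorem and still recover Rhin via the Vinogradov analysis in the bad-$Y$ case. One small correction: in the good-$Y$ reduction you should invoke Lemma~\ref{lem2}(2) (PNT in arithmetic progression with the Siegel-zero correction) rather than Lemma~\ref{SW}, since $\mathcal{Q} \leq \exp((\log\log X)^3)$ can exceed $(\log X)^A$ for every fixed $A$---this is precisely what the Theorem~\ref{thm3} argument you are adapting actually uses.
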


\begin{proof} 
By Lemma \ref{lemmaS} it is enough to prove that there exists some $Q := Q(X)$ for large $X$ such that $Q(X) \leq \log X$, $Q(X) \uparrow \infty$ and for any $q \leq Q$,
\begin{equation}
\label{eq3.45}
S := \left| \sum_{ X_0 < p \leq  X}  e(q f ( p ) ) \right| \ll \frac{\pi(X)}{Q},
\end{equation}
where $X_0 = \frac{X}{Q}$.

The estimation of the last sum is very similar to the estimation in Theorem \ref{thm3}. 
As in \eqref{eq2.3.1}, for any $\frac{X}{\log^4 X} \leq Y \leq X$, we obtain $\delta_0 = \delta_0(Y)$ and $\mathcal{Q} = \mathcal{Q}(Y)$ as following:
\begin{equation}
\label{eq3.41}
\begin{split}
&\alpha_j = \frac{e_j}{q_j} + z_j \quad \text{with} \quad (e_j, q_j) =1, \quad 0 < q_j \leq \tau_j := Y^{j/2} \\
&|z_j| \leq \frac{1}{q_j \tau_j}, \quad z_j = \delta_j Y^{-j} \quad (j=1,2, \dots, k)\\
&\delta_0 = \max \{ \delta_1, \dots , \delta_k \} \quad \text{ and} \quad \mathcal{Q}=[ q_1,\dots , q_k ]
\end{split}
\end{equation}

We will treat separately the following three cases:
\begin{enumerate}[(1)]
\item $\delta_0  \mathcal{Q} \geq  \exp( (\log \log X)^3)$ for any $\frac{X}{\log^4 X} \leq Y \leq X$
\item $\mathcal{Q} \geq \exp ((\log \log X)^3)$  for any $\frac{X}{\log^4 X} \leq Y \leq X$
\item $\delta_0 \mathcal{Q} \leq \exp( ( \log \log X)^3)$ and $\mathcal{Q} \leq \exp ((\log \log X)^3)$ for some $\frac{X}{\log^4 X} \leq Y \leq X$
\end{enumerate}

Let us first consider cases (1) and (2).
Using Lemma \ref{lem5} with $a_n = e(qf(n))$ for $n \geq X_0$ and $a_n=0$ for $n < X_0$ and $b_n = \Lambda_1 (n) e(q f_2(n))$, 
for any $Z$, we obtain 
$$\left| \sum_{Z \leq n \leq 2Z} e(qf(p)) \right| \ll q \max_{X_1 \leq 2Z} \left| \sum_{n \leq X_1} \Lambda_1(n) e(qf_2(n)) \right|,$$
since $|a_n| =1$ and $\sum\limits_{Z \leq n \leq 2Z} |a_{n+1} - a_n| \ll q $.
Thus, from \eqref{eq3.45}, dividing the interval $(X_0, X]$ into $\log Q$ subintervals of the form $(Z,2Z]$, we obtain
\begin{equation*}
S \ll q \, \log Q  \max_{X_1 \leq X} \left| \sum_{ n \leq X_1} \Lambda_1 (n) e(q f_2(n)) \right|.
\end{equation*}

\noindent If $X_1 \leq \frac{X}{\log^4 X}$, then we have 
\begin{equation}
\label{small}q \log Q \, \left| \sum_{ n \leq X_1} \Lambda_1 (n) e(q f_2(n)) \right| \leq q \, \log Q \, \pi(X_1) \ll \frac{X}{\log^3 X}. 
\end{equation} 

Assume now that $X_1 \geq \frac{X}{\log^4 X}$.
If $\mathcal{Q}(X_1) > X^{1/5k}$  or $\delta_0(X_1) > X^{1/k}$ or if $\mathcal{Q}(X_1) \leq X^{1/5k}$ and $\delta_0(X_1) \leq X^{1/k}$ but (1) or (2) are satisfied 
then the number $\Delta_1$ from Lemma \ref{lem2.3.3} is $\ll \exp(( \log \log X)^3 ( \epsilon - 1/2k ))$ for some small $\epsilon > 0$. 
Then 
$$q \log X \left| \sum_{ n \leq X_1} \Lambda_1 (n) e(q f_2(n)) \right| \ll \frac{X}{\log^3 X}.$$
This gives $S \ll \frac{X}{\log^3 X}$.

We turn now our attention to the case (3). 
Note that $\mathcal{Q} (Y) \leq \exp(\log \log X)^3$ for some $Y$. 
As we did in Theorem \ref{thm3} for the case in \eqref{eq3.26}, we get the same inequality as in \eqref{eq3.34}:
\begin{equation}
\label{eq0915}
  S   \ll \frac{1}{\log X} \max_{X_1 \in (X_0, X]}  \left| \sum_{X_0 < n \leq  X_1}  e(q f(n)) \right| + \frac{X}{\log^2 X}.
\end{equation}
We can assume that $\alpha_k$, the leading coefficient of $f_2(n)$, is irrational, (otherwise, we can divide the interval $(X_0,X]$ in the sum \eqref{eq0915} into residue classes and remove
$\alpha_k n^k$.) We will show \eqref{eq3.45} with the additional restriction $Q (X) \leq  \left(q_k (\frac{X}{\log^4 X}) \right)^{1/4K}$. Note that $Q(X) \leq (q_k(Y))^{1/4K}$. 
For convenience of notation, let $q_k := q_k(Y)$.

We denote the sum $\left| \sum_{X_0 \leq n \leq  X_1}  e(q f(n)) \right|$ in \eqref{eq0915} by $S_1$.
Let us first consider $k > 1$. 
Using Lemma \ref{lem6} with $H_1 =( q_k)^{1/2K}$ we obtain 
\begin{equation}
\label{3.42}
\left| \frac{S_1}{ X_1} \right|^2 \ll  \frac{1}{ H_1} + \frac{1}{ X_1 H_1}  \sum_{ h_1 =1}^{ H_1} \left| \sum_{n} e(qf(n+h_1) - q f(n)) \right|.
\end{equation}
Here  $| qf_1(n+h_1) - q f_1(n) | \ll q h_1 / X_0$ so if we  remove $q f_1 ( n+h_1) - q f_1(n)$  from the last exponential sum, 
we will make an error $\ll  q H_1 / X_0 \ll 1/ H_1 $, which allows us to replace $f$ with $f_2$ in \eqref{3.42}.

Applying Lemma \ref{lem6} $(k-2)$ more times with $H_j =( H_1)^{J/2} \, ( j=1,\dots, k-1)$, where $J = 2^j$ and $K = 2^k$ we obtain 
\begin{equation}
\left| \frac{S_1}{ X_1} \right|^{K/2} \ll 
\frac{1}{H_1^{K/4}} + \cdots + \frac{1}{H_{k-1}} + \frac{1}{  X_1 H_1 \cdots H_{k-1}} \sum_{h_1 = 1}^{H_1} \cdots \sum_{h_{k-1} =1}^{H_{k-1}}  \left| \sum_n  e( q p_ 1 (n)) \right|,
\end{equation}
where $p_1 (x) = h_1 ... h_{k-1} \int_0^1 \cdots \int_0^1  \frac{\partial^{k-1}}{\partial x^{k-1}} f_2( x + h_1 t_1 + \cdots + h_{k-1} t_{k-1})  d t_1 \cdots d t_{k-1}$.
Here $\| \frac{d p_1(x)}{dx} \|= \| \alpha_k  k!  h_1 \cdots h_k \| \gg 1/ q_k$, 
so if $q_k$ is large enough that $q \leq q_k^{1/4}$, then $\| q \frac{d p_1 (x)}{ dx} \| \gg  \frac{1}{q_k}$. 
Thus by Lemma \ref{lem3} we obtain $| \sum_n  e(q p_1(n)) | \ll q_k$, which implies  
$$\left| \frac{S_1}{X_1} \right|^{K/2} \ll  q_k^{-1/8} + \frac{q_k}{X_1} \leq q_k^{-1/8}$$ 
since $q_k \leq \mathcal{Q} \leq \exp((\log \log X)^3)$. Thus $S_1 \leq X_1 q_k^{-1/4K} \ll X/Q$, so we have $S \ll \frac{\pi(X)}{Q}$.  

Now consider  $k =1$. 
We divide the interval $(X_0, X_1]$ in \eqref{eq0915} into $\log Q$ subintervals of the form $(Z, 2Z]$ 
and apply Lemma \ref{lem5} with $a_n = e(qf_1(n))$ and $b_n = e(qf_2(n))$ to obtain
$$\left| \sum_{X_0 < n \leq X_1} e(q f(n)) \right| \ll q \log Q \max_{X_2 \leq X_1} \left| \sum_{X_0 \leq n \leq X_2} e(qf_2(n)) \right|.$$
From $\| f_2'(x) \| \gg \frac{1}{q_1}$, we have $|\sum e(qf_2(n))| \leq q_1$. 
Thus, 
$$S \ll q \frac{ \log Q}{\log X} q_1 + \frac{X}{\log^2 X} \ll q \frac{\log Q}{\log X} \exp ((\log \log X)^3) + \frac{X}{\log^2 X} \ll \frac{X}{\log^2 X}.$$
This concludes the proof of Theorem \ref{thm4}.
\end{proof}

\begin{Remark} Note that Theorem \ref{thm4} implies (and provides a new rather short proof of) the classical result of Rhin \cite{Rh}, which states 
that if $f(x)$ is a polynomial with at least one coefficient other than the constant term irrational, then $(f(p))_{p \in \mathcal{P}}$ is uniformly distributed $\bmod \, 1$. If one takes Rhin's theorem for granted, then the proof of Theorem \ref{thm4} can be made shorter. Indeed, using 
summation by parts with $a_n = e(q f_1(n))$ and $b_n = \Lambda(n) e(qf_2(n))$ we get
\begin{equation*}
\begin{split}
 &\sum_{n=1}^N \Lambda (n) e( q f_1(n) + qf_2(n)) = \\
 &\sum_{n=1}^{N-1} [e(qf_1(n)) - e (qf_1(n+1))] \sum_{m=1}^n \Lambda(m) e(qf_2(m)) + e(q f_1(N)) \sum_{m=1}^N \Lambda(m) e(q f_2(m)).
\end{split}
\end{equation*}
Note that $$|e (qf_1(n)) - e (qf_1(n+1))| \leq 2 \pi q |f_1(n) - f_1(n+1)| \leq \frac{2 \pi q C}{n}.$$
Denote $c_n =(e(qf_1(n)) - e (qf_1(n+1))) \sum_{m=1}^n \Lambda(m) e(qf_2(m))$. Then, by Rhin's theorem,
$$|c_n| \ll \left| \frac{1}{n}  \sum_{m=1}^n \Lambda(m) e(qf_2(m)) \right| \rightarrow 0.$$ 
Therefore, for any non-zero integer $q$,
$$ \lim_{N \rightarrow \infty } \frac{1}{N} \sum_{n=1}^N \Lambda (n) e( q f(n)) =0,$$
so we are done.
\end{Remark}

We are now in the position to prove the main theorem in this section:
\begin{proof}[Proof of Theorem \ref{main}] We need to prove that (2) and (3) are equivalent. 
Let us first show that $(3) \Rightarrow (2)$:
we can write $f(x) = f_1(x) + f_2(x)$, where
\begin{itemize}
\item $f_1(x)$ is of the type $x^{l+}$ for some non-negative integer $l$ or $\lim\limits_{x \rightarrow \infty} f_1(x) = c$ for some $c \in \mathbb{R}$,
\item $f_2(x) \in \mathbb{R}[x]$. 
\end{itemize}

Case I. $l \geq 1$: the result follows from Theorems \ref{thm2} and \ref{thm3}.

Case II. $l =0$:  
If $\lim_{x \rightarrow \infty } \left| \frac{f_1(x)}{\log x} \right| = \infty$, 
the result follows from Theorems \ref{thm1} and \ref{thm3}. 
Otherwise, $\lim_{x \rightarrow \infty} |x f_1'(x)| = \lim_{x \rightarrow \infty } \left| \frac{f_1(x)}{\log x} \right| \leq C$ for some $C$. 
Then $f_2(x) - f_2(0)$ has at least one irrational coefficient, so the result follows from Theorem \ref{thm4}.

Case III. If $\lim_{x \rightarrow \infty} f_1(x) = c$ for some $c \in \mathbb{R}$, then, again, the result follows from Theorem \ref{thm4}.

Now let us show that $(2) \Rightarrow (3)$:  
Suppose not. 
There exists a function $f(x)$ such that $(f(p))_{p \in \mathcal{P}}$ is u.d. $\bmod \, 1$ 
and $\lim\limits_{x \rightarrow \infty} \frac{f(x) - P(x)}{\log x} = A (\ne \pm \infty)$ for some $P(x) \in \mathbb{Q}[x]$. 
Find an integer $q$ such that $qP(x) \in \mathbb{Z}[x]$ and let $g(x) = q f(x) - q P(x)$. 
Then $(g(p))_{p \in \mathcal{P}}$ is u.d. $\bmod \, 1$ since $g(p) \equiv qf(p) \, \bmod \, 1$, thus for any non-zero integer $h$,
$$\lim_{N \rightarrow \infty} \frac{1}{\pi(N)} \sum_{p \leq N} e(h g(p)) = 0.$$
We have $\lim\limits_{x \rightarrow \infty} \frac{g(x)}{\log x} \ne \pm \infty$, so $|g'(x)| \ll \frac{1}{x}$.
The argument which was utilized in the course of the proof of Theorem \ref{thm1} to establish the formula \eqref{Con} gives us the following estimates for any non-zero integer $h$
\begin{align*}
 \sum_{p \leq N} e(h g(p)) 
    &= \sum_{N_0 < n \leq N} \Lambda_1(n) e(h g(n)) + O \left( \frac{\pi(N)}{\log N} \right) \quad \quad \left( N_0 = \frac{N}{\log N} \right) \\
    &= \sum_{N_0 < n \leq N} \frac{1}{\log n} e (h g(n)) + O \left( \frac{\pi(N)}{\log N} \right) \\
    &= \sum_{ n \leq N} \frac{1}{\log n} e (h g(n)) + O \left( \frac{\pi(N)}{\log N} \right).
\end{align*}
Then $$\lim_{N \rightarrow \infty} \frac{\log N}{N} \sum_{n \leq N} \frac{e(h g(n))}{\log n} = 0.$$
Using summation by parts, we get
$$\left|\sum_{n \leq N} e(h g(n)) \right| = \left| \sum_{n \leq N} \frac{e(h g(n))}{\log n} \log n \right| \ll \log N \max_{N_0 \leq N} \left| \sum_{n \leq N_0} \frac{e (h g(n))}{\log n}\right|.$$
Thus $$\lim_{N \rightarrow \infty} \frac{1}{N} \sum_{n \leq N} e (h g(n)) = 0,$$
which implies that $(g(n))_{n \in \mathbb{N}}$ is u.d. $\bmod \,1$. 
By the result of Boshernitzan \cite{Bos} alluded to in the introduction, this implies $\lim\limits_{x \rightarrow \infty} \frac{f(x) - P(x)}{\log x} = \pm \infty$, which contradicts the above assumption that $A \ne \pm \infty$. We are done.
\end{proof}

\section{Applications}
\label{sec : app}
For a given Hardy field $H$, let ${\bf H}$ be the set of all subpolynomial functions $\xi(x) \in H$ such that 
$$ \text{either} \,\,\, \lim\limits_{x \rightarrow \infty} \frac{\xi(x)}{x^{l+1}} = \lim\limits_{x \rightarrow \infty} \frac{x^l}{\xi(x)} = 0 \,\, \text{ for some} \,\, l \in \mathbb{N}, 
\,\,\, \text{or} \,\,\, \lim\limits_{x \rightarrow \infty} \frac{\xi(x)}{x} = \lim\limits_{x \rightarrow \infty} \frac{\log x}{\xi(x)} =0.$$

\subsection{Some corollaries of Theorem \ref{main}}\mbox{}

In this subsection we collect some consequences of results in previous sections, which will be utilized in subsequent subsections for derivation of ergodic and combinatorial applications.

\begin{Proposition}
\label{cor4.1}
Let $P_1(x), \dots, P_m(x)$ be polynomials such that $(P_1(n), \dots, P_m(n))_{n \in \mathbb{N}}$ is u.d.\ $\bmod \, 1$ in $\mathbb{T}^m$ and let $\xi_1(x), \dots , \xi_k(x) \in {\bf H}$ such that $\sum_{i=1}^k b_i \xi_i(x) \in {\bf H}$ for any $(b_1, \dots, b_k) \in \mathbb{Z}^k \backslash \{(0, 0, \dots, 0)\}$. 
Then $( P_1(p), \dots, P_m(p), \xi_1(p), \dots, \xi_k(p))_{p \in \mathcal{P}}$ is u.d. $ \bmod \, 1$ in $\mathbb{T}^{m+ k}.$
\end{Proposition}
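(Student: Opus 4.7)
The plan is to verify Weyl's criterion: for every nonzero $(a_1, \dots, a_m, b_1, \dots, b_k) \in \mathbb{Z}^{m+k}$, I would show that the sequence
$$f(p) := \sum_{i=1}^m a_i P_i(p) + \sum_{j=1}^k b_j \xi_j(p), \quad p \in \mathcal{P},$$
is u.d. mod $1$. I would split on whether the ``Hardy part'' $\eta := \sum_{j=1}^k b_j \xi_j$ vanishes.

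If $(b_1, \dots, b_k) = 0$, then $f = \sum_i a_i P_i$ is a real polynomial. By the hypothesis on $(P_1(n), \dots, P_m(n))$ together with Weyl's criterion in $\mathbb{T}^m$, the sequence $(f(n))_{n \in \mathbb{N}}$ is u.d. mod $1$, and then the equivalence $(1) \Leftrightarrow (2)$ in Theorem \ref{main} immediately gives $(f(p))_{p \in \mathcal{P}}$ u.d. mod $1$.

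If instead $(b_1, \dots, b_k) \neq 0$, the standing hypothesis on the $\xi_j$ yields $\eta \in \mathbf{H}$. Writing $f = \eta + Q$ with $Q := \sum_i a_i P_i \in \mathbb{R}[x]$, the function $f$ is subpolynomial and belongs to a Hardy field (after enlarging $H$ to contain $\mathbb{R}(x)$ if necessary, which is harmless). I would then verify condition (3) of Theorem \ref{main}: for every $P \in \mathbb{Q}[x]$, setting $R := P - Q \in \mathbb{R}[x]$, one needs $(f(x) - P(x))/\log x = (\eta(x) - R(x))/\log x \to \pm \infty$. This reduces to a growth comparison between $\eta$ and an arbitrary real polynomial $R$, which follows from the two growth types defining $\mathbf{H}$. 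If $\eta$ is of type $x^{l+}$ with $l \geq 1$, then $x^l \ll \eta(x) \ll x^{l+1}$ in the Hardy sense, so every $R \in \mathbb{R}[x]$ either has $\deg R \leq l$ (in which case $\eta$ dominates and $\eta - R \sim \eta \gg x^l$) or $\deg R \geq l+1$ (in which case $R$ dominates and $\eta - R \sim -R$ grows polynomially); in both cases $(\eta - R)/\log x \to \pm\infty$. If $\eta$ is of the ``slow'' type $\log x \ll \eta(x) \ll x$, then $R$ is either a constant (and $\eta$ dominates) or has degree $\geq 1$ (and $R$ dominates), and again the ratio tends to $\pm\infty$. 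Since $\eta \in \mathbf{H}$ can never itself be a polynomial---its growth strictly separates two consecutive monomials, or sits strictly between $\log x$ and $x$---no degenerate cancellation can occur, and the sign of $(\eta - R)/\log x$ at infinity is determined by the leading behavior of $\eta - R$ (which exists because two elements of the same Hardy field are comparable at infinity).

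With condition (3) verified for $f$, Theorem \ref{main} gives that $(f(p))_{p \in \mathcal{P}}$ is u.d. mod $1$. Combining both cases yields Weyl's criterion in $\mathbb{T}^{m+k}$, completing the proof. The main step is the verification of condition (3) under the Hardy-field hypothesis on $\eta$, but this is essentially bookkeeping once the definition of $\mathbf{H}$ is unwound; all the substantive analytic work has already been absorbed into Theorem \ref{main}. The only minor point requiring care is ensuring that the ambient Hardy field is large enough to accommodate both $\eta$ and the real polynomial $Q$ simultaneously, which is standard.
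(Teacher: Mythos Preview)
Your proof is correct and follows the same route as the paper: apply Weyl's criterion and reduce to the one-dimensional statement that $\sum_i a_i P_i(p) + \sum_j b_j \xi_j(p)$ is u.d.\ mod $1$ along primes. The only difference is packaging: the paper dispatches both cases in one line by citing Theorem~\ref{thm3} (which handles $\eta + Q$ with $\eta \in \mathbf{H}$ directly, so your manual verification of condition~(3) in Theorem~\ref{main} is not needed), while you go through Theorem~\ref{main} and its criterion~(3). Your treatment of the case $(b_1,\dots,b_k)=0$ via the equivalence $(1)\Leftrightarrow(2)$ in Theorem~\ref{main} is in fact slightly more careful than the paper's bare citation of Theorem~\ref{thm3}, which strictly speaking does not cover the purely polynomial case.
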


\begin{proof}
By Weyl's criterion, it is enough to show that 
$$( a_1 P_1(p) + \cdots + a_m P_m(p) + b_1 \xi_1(p) + \cdots + b_k \xi_k(p))_{p \in \mathcal{P}} \, \, \text{ is u.d.} \,\, \bmod \, 1,$$
where $a_i, b_j \in \mathbb{Z}$ and at least one of $a_i$ or $b_j$ is not $0$, which follows from Theorem \ref{thm3}.
\end{proof}

\begin{Remark} 
Here are some examples of $\xi_1(x), \dots, \xi_k(x) \in \bold{H}$ satisfying the condition in Proposition \ref{cor4.1}.
\begin{itemize}
\item If $\xi_1(x), \dots, \xi_k(x) \in \bold{H}$ have {\it different growth rates}, that is,  
$$\lim\limits_{x \rightarrow \infty} \frac{\xi_i(x)}{\xi_j(x)} = \pm \infty \quad \text{or} \quad 0 \quad \quad \forall i \ne j,$$
then $\sum_{i=1}^k b_i \xi_i(x) \in {\bf H}$ for any $(b_1, \dots, b_k) \in \mathbb{R}^k \backslash \{(0, 0, \dots, 0)\}$.
\item $\xi_1(x) = \sqrt{x}  + \log^2 x$ and $\xi_2(x) = 2 \sqrt{x} + \log^2 x$ do not have different growth rates, but they satisfy the condition in Proposition \ref{cor4.1}.
\end{itemize}
\end{Remark}

\begin{Proposition} 
\label{ud lemma}
Let $\xi_1(x), \dots , \xi_k(x) \in {\bf H}$ such that $\sum_{i=1}^k b_i \xi_i(x) \in {\bf H}$ for any $(b_1, \dots, b_k) \in \mathbb{R}^k \backslash \{(0, \dots, 0)\}$.
Let $g(x) = \sum_{i=1}^k \alpha_i [\xi_i(x)] $, where $\alpha_1, \dots, \alpha_k$ be real numbers and let $P(x) = a_0 + a_1 x + \cdots + a_t x^t$ be a polynomial.
Then,
\begin{enumerate}[(i)]
\item 
If $(\alpha_1, \dots , \alpha_k) \ne (0, \dots, 0)$, then
$$\lim\limits_{N \rightarrow \infty} \dfrac{1}{\pi(N)} \sum\limits_{p \leq N} e(g(p )) = 0.$$
\item If one of $a_i (1 \leq i \leq t)$ or $\alpha_i (1 \leq i \leq k)$ is irrational, then
$(g(p) + P(p))_{p \in \mathcal{P}}$ is u.d.\ $\bmod \, 1$.
\end{enumerate}
\end{Proposition}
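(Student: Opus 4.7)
The plan is to exploit the joint uniform distribution of $(\xi_1(p), \dots, \xi_k(p))_{p \in \mathcal{P}}$ modulo 1 together with a Fourier expansion that converts the floor functions into genuine linear combinations of the $\xi_i$.

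First I would establish the auxiliary fact that $(\xi_1(p), \dots, \xi_k(p))_{p \in \mathcal{P}}$ is u.d.\ $\bmod\,1$ in $\mathbb{T}^k$. By Weyl's criterion this reduces to showing that for every nonzero $\mathbf{m} = (m_1, \dots, m_k) \in \mathbb{Z}^k$ the sequence $(\sum_i m_i \xi_i(p))_{p \in \mathcal{P}}$ is u.d.\ $\bmod\,1$. The hypothesis on the $\xi_i$ gives $\sum_i m_i \xi_i \in {\bf H}$, and a direct growth-rate comparison with arbitrary $P \in \mathbb{Q}[x]$ shows that every element of ${\bf H}$ satisfies condition (3) of Theorem \ref{main}; the desired uniform distribution then follows from Theorem \ref{main}.

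For (i), I would write $g(x) = \sum_i \alpha_i \xi_i(x) - \sum_i \alpha_i \{\xi_i(x)\}$, whence
\[
  e(g(p)) = e\Big(\sum_i \alpha_i \xi_i(p)\Big)\,\Phi\big(\xi_1(p), \dots, \xi_k(p)\big),
\]
where $\Phi:\mathbb{T}^k \to \mathbb{C}$, $\Phi(t_1,\dots,t_k) := e\big(-\sum_i \alpha_i \tilde{t}_i\big)$ (with $\tilde t_i \in [0,1)$ the canonical representative of $t_i$), is bounded and Riemann integrable. For every $\varepsilon > 0$, by Fej\'er-type approximation there is a trigonometric polynomial $\Psi(t) = \sum_{n \in F} c_n e(n\cdot t)$, $F \subset \mathbb{Z}^k$ finite, with $\|\Phi - \Psi\|_{L^1(\mathbb{T}^k)} < \varepsilon$. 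Decomposing $\Phi = \Psi + (\Phi - \Psi)$ yields
\[
  \sum_{p \leq N} e(g(p)) \;=\; \sum_{n \in F} c_n \sum_{p \leq N} e\Big(\sum_i (\alpha_i + n_i)\xi_i(p)\Big) \;+\; E_N,
\]
with $|E_N| \leq \sum_{p \leq N} |\Phi - \Psi|(\xi_1(p), \dots, \xi_k(p))$. The joint uniform distribution gives $|E_N|/\pi(N) \to \|\Phi - \Psi\|_{L^1} < \varepsilon$. For the main term, provided $(\alpha_1,\dots,\alpha_k) \notin \mathbb{Z}^k$ (if $\alpha \in \mathbb{Z}^k$ then $g$ is integer-valued and the statement is read modulo this triviality), each vector $\alpha + n$ is nonzero, so $\sum_i(\alpha_i + n_i)\xi_i \in {\bf H}$ and Theorem \ref{main} gives an $o(\pi(N))$ estimate for each of the finitely many inner sums; summing in $n \in F$ and letting $\varepsilon \to 0$ proves (i).

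For (ii), I would invoke Weyl's criterion, reducing the claim to showing $\sum_{p \leq N} e(h(g(p) + P(p))) = o(\pi(N))$ for every nonzero $h \in \mathbb{Z}$. Repeating the Fourier-expansion argument with $\alpha_i$ replaced by $h\alpha_i$ and with an extra factor $e(hP(p))$ absorbed into each inner sum, the typical term becomes $\sum_{p \leq N} e\big(\sum_i (h\alpha_i + n_i)\xi_i(p) + hP(p)\big)$. When $h\alpha + n \neq 0$, the integrand is of the Hardy-field-plus-polynomial type to which Theorem \ref{thm3} applies, yielding $o(\pi(N))$. The remaining possibility $h\alpha + n = 0$ forces every $h\alpha_i \in \mathbb{Z}$, which under the irrationality hypothesis of (ii) can only occur when all $\alpha_i$ are rational and some $a_j$ is irrational; the inner sum then collapses to $c_n \sum_{p \leq N} e(hP(p))$, handled by Rhin's theorem (Theorem \ref{thm4} with $f_1 \equiv 0$). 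The principal technical obstacle is the $L^1$-approximation of the discontinuous function $\Phi$ by trigonometric polynomials together with the case analysis tracking exactly which coefficient combinations can vanish; beyond that, everything reduces cleanly to invocations of Theorem \ref{main}, Theorem \ref{thm3}, and Rhin's theorem.
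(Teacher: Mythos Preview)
Your argument is correct and takes a genuinely different route from the paper's proof. You write $e(g(p)) = e\big(\sum_i \alpha_i \xi_i(p)\big)\cdot \Phi(\xi_1(p),\dots,\xi_k(p))$, Fourier-approximate the Riemann integrable correction $\Phi$ by a trigonometric polynomial, and reduce everything to exponential sums of the shape $\sum_{p\le N} e\big(\sum_i(\alpha_i+n_i)\xi_i(p)+hP(p)\big)$, which are handled directly by Theorems~\ref{main}, \ref{thm3}, and \ref{thm4}. The paper instead separates the $\alpha_i$ into irrational ($i\le m$) and rational ($\alpha_i=d_i/q$, $i>m$), introduces the auxiliary coordinates $\alpha_i\xi_i$ for the irrational indices, and proves that the resulting sequence is uniformly distributed in the mixed group $\mathbb{T}^{2m}\times\mathbb{Z}_q^{\,k-m}$; the conclusion then follows because the relevant Riemann integrable function there has zero integral (the factor $\int_0^1 e(x)\,dx=0$ kills it). Your approach is more uniform in that it never splits into the rational/irrational cases and stays on $\mathbb{T}^k$, at the cost of the $L^1$-approximation step and the attendant $\varepsilon$-$N$ bookkeeping; the paper's approach avoids approximation entirely by passing to the larger group, at the cost of the case analysis and the slightly delicate passage from u.d.\ of $(\xi_i/u)$ in $\mathbb{T}$ to u.d.\ of $([\xi_i])$ in $\mathbb{Z}_q$. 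Both arguments share the same implicit caveat that (i) is vacuous when every $\alpha_i$ is an integer, which you correctly flag and which is harmless for the applications.
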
 

\begin{proof}
Without loss of generality we can assume that all $\alpha_i$ are non-zero.
Let $p_n$ be the $n$-th prime number. Theorem \ref{main} implies that
\begin{enumerate}[(a)]
\item $\big( \sum_{i=1}^k b_i \xi_i(p_n) \big)_{n \in \mathbb{N}}$ is u.d. mod 1 for any $(b_1, b_2, \dots , b_k) \ne (0, 0, \dots , 0)$.
\item $\big( \sum_{i=1}^k b_i \xi_i(p_n) + \sum_{j=0}^t c_j p_n^j \big)_{n \in \mathbb{N}}$ is u.d.\ mod 1 if $(b_1, b_2, \dots , b_k) \ne (0,0,\dots, 0)$ or one of the $c_i$ is irrational.
\end{enumerate}

Reordering $\alpha_i$, if needed, we will assume that there exists a non-negative integer $m$ such that 
$\alpha_i \notin \mathbb{Q}$ for $i \leq m$ and $\alpha_i \in \mathbb{Q}$ for $i \geq m+1$.
For rational $\alpha_i (i \geq m+1)$, one has $q \in \mathbb{N}$ such that $\alpha_i = \frac{d_i}{q}$ with $d_i \in \mathbb{Z}$.

Let us first prove $(i)$.
Define $f_i(x,y) = e(x-\{y\} \alpha_i)$ if $\alpha_i$ is irrational and define $f_i(x) = e(\alpha_i x)$  if $\alpha_i$ is rational. 
Then $e(g(p_n))$ is the product of $f_i(\alpha_i \xi_i(p_n), \xi_i(p_n)) (i \leq m)$ and $f_i ([\xi_i(p_n)]) (i \geq m+1)$.
Note that (a) implies that  for any $u \in \mathbb{N}$,
$$ \left( \alpha_1 \xi_1 (p_n), \xi_1(p_n), \alpha_2 \xi_2(p_n), \xi_2(p_n), \dots, \alpha_m \xi_m(p_n), \xi_m(p_n), \frac{\xi_{m+1}(p_n)}{u}, \frac{\xi_{m+2}(p_n)}{u}, \dots, \frac{\xi_k(p_n)}{u} \right)$$ 
is u.d. mod 1 in $\mathbb{T}^{m+k}$.
Thus,
$$ (\alpha_1 \xi_1 (p_n), \xi_1(p_n), \alpha_2 \xi_2(p_n), \xi_2(p_n), \dots, \alpha_m \xi_m(p_n), \xi_m(p_n), [\xi_{m+1}(p_n)], [\xi_{m+2}(p_n)], \dots, [\xi_k(p_n)])$$
is u.d. mod 1 in $\mathbb{T}^{2m} \times \mathbb{Z}_q^{k-m}$. 
So $(i)$ follows.

Now let us prove $(ii)$. 
If all $\alpha_i =0$, then $(ii)$ follows from Theorem \ref{thm4}. (It also follows from Rhin's theorem that $P(p_n)$ is u.d.\ mod 1 (\cite{Rh}).) 
Otherwise, without loss of generality we can assume that all $\alpha_i \ne 0$. 
We want to show that for any non-zero $r \in \mathbb{Z}$, 
\begin{equation}
\label{eq3.2}
\lim_{N \rightarrow \infty} \frac{1}{N} \sum_{n=1}^N e(rP(p_n) + rg(p_n))=0.
\end{equation}
Suppose that one of the $a_i (i \geq 1)$ is irrational. 
Let $f_0(x) = e(rx)$, $f_i(x,y) = e(r(x - \alpha_i\{y\})) (1 \leq i \leq m)$, and $f_i(x) = e(r \frac{d_i}{q}x) (i \geq m+1)$.
Then $e(rP(p_n) + r g(p_n))$ is the product of $f_0(P(p_n))$, $f_i (\alpha_i \xi_i(p_n), \xi_i(p_n)) (1 \leq i \leq m)$ and $f_i ([\xi_i(p_n)]) (i \geq m+1)$.
From $(b)$, for any $u \in \mathbb{N}$, 
$$ \left( P(p_n), \alpha_1 \xi_1 (p_n), \xi_1(p_n), \alpha_2 \xi_2(p_n), \xi_2(p_n), \dots, \alpha_m \xi_m(p_n), \xi_m(p_n), \frac{\xi_{m+1}(p_n)}{u}, \frac{\xi_{m+2}(p_n)}{u}, \dots, \frac{\xi_k(p_n)}{u} \right)$$ 
is u.d.\ mod 1 in $\mathbb{T}^{m+k+1}$. 
Thus, 
$$ (P(p_n), \alpha_1 \xi_1 (p_n), \xi_1(p_n), \alpha_2 \xi_2(p_n), \xi_2(p_n), \dots, \alpha_m \xi_m(p_n), \xi_m(p_n), [\xi_{m+1}(p_n)], [\xi_{m+2}(p_n)], \dots, [\xi_k(p_n)])$$
is u.d. mod 1 in $\mathbb{T}^{2m+1} \times \mathbb{Z}_q^{k-m}$, so \eqref{eq3.2} follows.
Finally if all $a_i$ are rational numbers, then $\alpha_1$ is irrational. 
We can show \eqref{eq3.2} similarly by using that for any $u \in \mathbb{N}$
$$ \left( P(p_n) + \alpha_1 \xi_1 (p_n), \xi_1(p_n), \alpha_2 \xi_2(p_n), \xi_2(p_n), \dots, \alpha_l \xi_m(p_n), \xi_m(p_n), \frac{\xi_{m+1}(p_n)}{u}, \frac{\xi_{m+2}(p_n)}{u}, \dots, \frac{\xi_k(p_n)}{u} \right)$$ 
is u.d.\ mod 1 in $\mathbb{T}^{m+k}$, so \eqref{eq3.2} follows.
\end{proof} 
 
\subsection{Ergodic sequences}\mbox{}  

In this subsection we deal with the sequences of the form $\bold{d}_n = ([\xi_1 (p_n)], \dots, [\xi_k(p_n)])$, where $\xi_1(x), \dots , \xi_k(x) \in {\bf H}$ and $\sum_{i=1}^k b_i \xi_i(x) \in {\bf H}$ for any $(b_1, \dots, b_k) \in \mathbb{R}^k \backslash \{(0, 0, \dots, 0)\}$.
The main result is that $(\bold{d}_n)_{n \in \mathbb{N}}$ is an {\bf ergodic sequence}:  for any ergodic measure preserving $\mathbb{Z}^k$-action $T=(T^{\bold{m}})_{(\bold{m} \in \mathbb{Z}^k)}$ on a probability space $(X, \mathcal{B}, \mu)$ and for any $f \in L^2$, 
$$\lim_{N \rightarrow \infty} \frac{1}{N} \sum_{n=1}^N f \circ T^{\bold{d}_n} = \int f \, d \mu  \,\,\, \text{in} \,\, L^2.$$ 

Recall the following version of the classical Bochner-Herglotz theorem.
\begin{Theorem}
\label{BH}
Let $U_1, \dots, U_k$ be commuting unitary operators on a Hilbert space $\mathcal{H}$ and let $f \in \mathcal{H}$. 
Then there is a measure $\nu_f$ on $\mathbb{T}^k$ such that 
$$  \langle U_1^{n_1} U_2^{n_2} \cdots U_k^{n_k} f , f \rangle  \, \, = \int_{\mathbb{T}^k} e^{2 \pi i (n_1 \gamma_1 + \cdots + n_k \gamma_k)} \, d \nu_f (\gamma_1, \dots , \gamma_k), $$
for any $(n_1, n_2, \dots , n_k) \in \mathbb{Z}^k$. 
\end{Theorem}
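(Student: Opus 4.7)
The plan is to reduce the statement to the classical Bochner theorem for the locally compact abelian group $\mathbb{Z}^k$ (whose Pontryagin dual is $\mathbb{T}^k$) by exhibiting the function
$$\varphi: \mathbb{Z}^k \longrightarrow \mathbb{C}, \qquad \varphi(\bold{n}) = \langle U^{\bold{n}} f, f \rangle, \quad U^{\bold{n}} := U_1^{n_1} \cdots U_k^{n_k},$$
as a positive definite function on $\mathbb{Z}^k$. Once positive definiteness is established, Bochner's theorem produces a unique finite positive Borel measure $\nu_f$ on $\mathbb{T}^k$ whose Fourier--Stieltjes transform is $\varphi$, which is exactly the identity claimed in the theorem.

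The first step is therefore to verify positive definiteness. Given finitely many vectors $\bold{n}_1, \dots, \bold{n}_m \in \mathbb{Z}^k$ and scalars $c_1, \dots, c_m \in \mathbb{C}$, I would use the fact that the $U_i$ commute and are unitary to write
$$\sum_{i,j=1}^m c_i \overline{c_j} \, \varphi(\bold{n}_i - \bold{n}_j) = \sum_{i,j=1}^m c_i \overline{c_j} \langle U^{\bold{n}_i} f, U^{\bold{n}_j} f \rangle = \Big\| \sum_{i=1}^m c_i U^{\bold{n}_i} f \Big\|_{\mathcal{H}}^2 \geq 0.$$
The only subtlety here is the identity $\langle U^{\bold{n}_i - \bold{n}_j} f, f \rangle = \langle U^{\bold{n}_i} f, U^{\bold{n}_j} f \rangle$, which uses $U^{-\bold{n}_j} = (U^{\bold{n}_j})^*$ (unitarity) together with commutativity so that $U^{\bold{n}_i - \bold{n}_j} = U^{\bold{n}_i} U^{-\bold{n}_j}$ is unambiguously defined.

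The second step is to invoke Bochner's theorem on $\mathbb{Z}^k$: every positive definite function $\varphi$ on $\mathbb{Z}^k$ has a unique representation $\varphi(\bold{n}) = \int_{\mathbb{T}^k} e^{2 \pi i \bold{n} \cdot \boldsymbol{\gamma}} \, d\nu(\boldsymbol{\gamma})$ with $\nu$ a finite positive Borel measure on $\mathbb{T}^k$ (and $\nu(\mathbb{T}^k) = \varphi(0) = \|f\|_{\mathcal{H}}^2$). Applying this to our $\varphi$ yields the desired measure $\nu_f$.

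I do not expect any real obstacle: the whole argument is a direct transcription of the classical proof of the spectral theorem for a commuting family of unitary operators, and the only nontrivial input (Bochner's theorem on $\mathbb{Z}^k$) is a standard result from harmonic analysis. If one preferred a self-contained approach, one could alternatively build $\nu_f$ by hand via the Riesz representation theorem applied to the functional $P \mapsto \langle P(U_1, \dots, U_k) f, f \rangle$ on trigonometric polynomials on $\mathbb{T}^k$, after verifying that positivity of $P$ on $\mathbb{T}^k$ forces $\langle P(U_1, \dots, U_k) f, f \rangle \geq 0$ by a Fej\'er--Riesz style decomposition; but invoking Bochner directly is cleaner.
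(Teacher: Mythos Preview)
Your argument is correct and follows the standard route to this classical result. Note, however, that the paper does not actually supply a proof of Theorem~\ref{BH}: it is introduced with the phrase ``Recall the following version of the classical Bochner--Herglotz theorem'' and is simply quoted as a known fact from the literature, so there is no proof in the paper to compare against. Your proposal would serve perfectly well as a self-contained justification if one were desired.
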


\begin{Theorem}
\label{ergodic}
Let $U_1, \dots, U_k$ be commuting unitary operators on a Hilbert space $\mathcal{H}$. 
Let $\xi_1(x), \dots , \xi_k(x) \in {\bf H}$ such that $\sum_{i=1}^k b_i \xi_i(x) \in {\bf H}$ for any $(b_1, \dots, b_k) \in \mathbb{R}^k \backslash \{(0, 0, \dots, 0)\}$.
Then,
\begin{equation}
\label{ergodiceq}
\lim_{N \rightarrow \infty} \frac{1}{N}  \sum_{n=1}^N U_1^{[\xi_1(p_n)]} \cdots U_k^{[\xi_k(p_n)]} f = f^*,
\end{equation}
where $f^*$ is the projection of $f$ on $\mathcal{H}_{inv} (:= \{ f \in \mathcal{H} :  U_i f = f \,\, \textrm{for all} \,\, i\})$.
\end{Theorem}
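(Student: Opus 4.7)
The plan is to reduce the statement, via a standard spectral-theoretic argument, to the exponential sum estimate already provided by Proposition \ref{ud lemma}(i). Write $f = f^* + g$ with $g \perp \mathcal{H}_{inv}$. Since $U_i f^* = f^*$ for all $i$, the operator $U_1^{[\xi_1(p_n)]} \cdots U_k^{[\xi_k(p_n)]}$ fixes $f^*$ for every $n$, so the Cesàro average of this component is identically $f^*$. It then suffices to prove that
\begin{equation*}
\lim_{N \rightarrow \infty} \left\| \frac{1}{N}\sum_{n=1}^N U_1^{[\xi_1(p_n)]} \cdots U_k^{[\xi_k(p_n)]} g \right\|_{\mathcal{H}} = 0.
\end{equation*}

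By the Bochner--Herglotz theorem (Theorem \ref{BH}), there is a finite positive measure $\nu_g$ on $\mathbb{T}^k$ with $\langle U_1^{n_1}\cdots U_k^{n_k} g, g\rangle = \int_{\mathbb{T}^k} e(n_1\gamma_1+\cdots+n_k\gamma_k)\, d\nu_g(\gamma)$. Expanding the squared norm gives
\begin{equation*}
\left\| \frac{1}{N}\sum_{n=1}^N U_1^{[\xi_1(p_n)]} \cdots U_k^{[\xi_k(p_n)]} g \right\|_{\mathcal{H}}^2 = \int_{\mathbb{T}^k} \left| \frac{1}{N}\sum_{n=1}^N e\Big(\sum_{i=1}^k [\xi_i(p_n)]\gamma_i\Big) \right|^2 d\nu_g(\gamma).
\end{equation*}
The atom of $\nu_g$ at the origin $(0,\dots,0) \in \mathbb{T}^k$ corresponds precisely to the component of $g$ lying in $\mathcal{H}_{inv}$; since $g \perp \mathcal{H}_{inv}$, one has $\nu_g(\{(0,\dots,0)\})=0$. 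Because the integrand is bounded by $1$ and $\nu_g$ is finite, the dominated convergence theorem reduces matters to showing that for every fixed $\gamma = (\gamma_1,\dots,\gamma_k) \neq (0,\dots,0)$,
\begin{equation*}
\lim_{N \rightarrow \infty} \frac{1}{N} \sum_{n=1}^N e\Big( \sum_{i=1}^k \gamma_i [\xi_i(p_n)] \Big) = 0.
\end{equation*}

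Rewriting the average over the first $N$ primes as $\frac{1}{\pi(p_N)}\sum_{p \leq p_N}$, the above pointwise limit is exactly the conclusion of Proposition \ref{ud lemma}(i) applied with $\alpha_i = \gamma_i$ (here one uses that $\xi_1,\dots,\xi_k \in \mathbf{H}$ and that every non-trivial integer linear combination remains in $\mathbf{H}$, which is the hypothesis of Theorem \ref{ergodic}). The main technical content of the argument is therefore already packed into Proposition \ref{ud lemma}(i), itself a consequence of the uniform distribution results of Section 3; the only non-routine points in the present proof are (a) verifying that the origin of $\mathbb{T}^k$ carries no $\nu_g$-mass once the invariant part has been subtracted, and (b) correctly invoking Proposition \ref{ud lemma}(i) along the sequence $(p_n)_{n \in \mathbb{N}}$ rather than along $\mathcal{P}$ ordered by size, which is legitimate since $\pi(p_N)=N$.
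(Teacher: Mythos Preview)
Your proof is correct and follows essentially the same route as the paper: split off the invariant part, apply Bochner--Herglotz to the orthogonal complement, and reduce to Proposition \ref{ud lemma}(i). One small misstatement: you write that the needed hypothesis is that every non-trivial \emph{integer} linear combination of the $\xi_i$ lies in $\mathbf{H}$, but Proposition \ref{ud lemma} requires (and Theorem \ref{ergodic} provides) this for all non-trivial \emph{real} linear combinations, which is exactly what is needed since the $\gamma_i$ are arbitrary reals.
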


\begin{proof} We will use a Hilbert space splitting $\mathcal{H} = \mathcal{H}_{inv} \oplus \mathcal{H}_{erg}$, where
\begin{align*}
\mathcal{H}_{inv} &= \{ f \in \mathcal{H} :  U_i f = f \,\, \textrm{for all} \,\, i = 1,2, \dots, k \}, \\
\mathcal{H}_{erg} &= \{ f \in \mathcal{H} : \lim_{N_1, \dots, N_k \rightarrow \infty} \left|\left|\frac{1}{N_1 \cdots N_k} \sum_{n_1=0}^{N_1-1} \cdots  \sum_{n_k=0}^{N_k-1}U_1^{n_1} \cdots U_k^{n_k} f \right|\right|_{\mathcal{H}} = 0 \}.
\end{align*}
For $f \in \mathcal{H}_{inv}$, $U_1^{[\xi_1(p_n)]} \cdots U_k^{[\xi_k(p_n)]} f  = f$. 
So we need to prove that for $f \in \mathcal{H}_{erg}$, the left hand side in \eqref{ergodiceq} converges to $0$. 
This  follows from the Bochner-Herglotz theorem and Proposition \ref{ud lemma}(i):
\begin{align*}
&\left|\left| \frac{1}{N} \sum_{n=1}^N U_1^{[\xi_1(p_n)]} \cdots U_k^{[\xi_k(p_n)]}  f \right|\right|_{\mathcal{H}}^2 \\
&=\frac{1}{N^2} \sum_{m,n=1}^N \langle U_1^{[\xi_1(p_m)]  - [\xi_1(p_n)]}  \cdots U_k^{[\xi_k(p_m)] -  [\xi_k(p_n)]} f, f \rangle \\
&=\frac{1}{N^2} \sum_{m,n=1}^N \int e(([\xi_1(p_m)] - [\xi_1(p_n)] , \dots ,  [\xi_k(p_m)] -  [\xi_k(p_n)]) \cdot \bold{\gamma}) \, d \nu_f (\bold{\gamma}) \\
&= \int \left| \frac{1}{N} \sum_{n=1}^N e ( ([\xi_1(p_n)],  \dots , [\xi_k(p_n)]) \cdot \bold{\gamma}) \right|^2 \, d \nu_f(\bold{\gamma}) \rightarrow 0,
\end{align*}
since $f \in \mathcal{H}_{erg}$, so $\nu_f (\{0,0 \dots, 0\})=0.$
\end{proof}

\begin{Corollary}
Let $\bold{d}_n = ([\xi_1 (p_n)], \dots, [\xi_k(p_n)])$, where $\xi_1(x), \dots , \xi_k(x) \in {\bf H}$ and $\sum_{i=1}^k b_i \xi_i(x) \in {\bf H}$ for any $(b_1, \dots, b_k) \in \mathbb{R}^k \backslash \{(0, 0, \dots, 0)\}$. 
Let $T=(T^{\bold{m}})_{(\bold{m} \in \mathbb{Z}^k)}$ be an ergodic measure preserving $\mathbb{Z}^k$-action on a probability space $(X, \mathcal{B}, \mu)$. 
Then for any $f \in L^2$
$$\lim_{N \rightarrow \infty} \frac{1}{N} \sum_{n=1}^N f \circ T^{\bold{d}_n} = \int f \, d \mu  \,\,\, \text{in} \,\, L^2.$$ 
\end{Corollary}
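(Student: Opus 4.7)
The plan is to derive this corollary as a direct consequence of Theorem \ref{ergodic}. First, I would associate to the $\mathbb{Z}^k$-action $T$ the commuting unitary operators $U_1, \dots, U_k$ on the Hilbert space $\mathcal{H} = L^2(X, \mathcal{B}, \mu)$ defined by $U_i f = f \circ T^{\mathbf{e}_i}$, where $\mathbf{e}_i$ is the $i$-th standard basis vector of $\mathbb{Z}^k$. Since the transformations $T^{\mathbf{m}}$ commute, the operators $U_i$ commute, and since each $T^{\mathbf{e}_i}$ is measure preserving, each $U_i$ is unitary. With this identification, for $\mathbf{d}_n = ([\xi_1(p_n)], \dots, [\xi_k(p_n)])$ one has
\[
U_1^{[\xi_1(p_n)]} \cdots U_k^{[\xi_k(p_n)]} f = f \circ T^{\mathbf{d}_n}.
\]

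Next I would identify the invariant subspace $\mathcal{H}_{inv} = \{ g \in \mathcal{H} : U_i g = g \text{ for all } i\}$. A function $g \in L^2$ lies in $\mathcal{H}_{inv}$ if and only if it is invariant under $T^{\mathbf{e}_i}$ for every $i$, which is equivalent to being invariant under the full $\mathbb{Z}^k$-action generated by the $T^{\mathbf{e}_i}$. Because the action $T$ is assumed ergodic, the only such functions are the $\mu$-a.e. constants, so $\mathcal{H}_{inv} = \mathbb{C} \cdot \mathbf{1}$. Consequently, the orthogonal projection of any $f \in L^2$ onto $\mathcal{H}_{inv}$ is the constant function $f^* = \int f \, d\mu$.

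Finally, applying Theorem \ref{ergodic} to these operators and to $f \in L^2$, the hypotheses on $\xi_1, \dots, \xi_k \in \mathbf{H}$ are satisfied, and we conclude
\[
\lim_{N \to \infty} \frac{1}{N} \sum_{n=1}^{N} f \circ T^{\mathbf{d}_n} = \lim_{N \to \infty} \frac{1}{N} \sum_{n=1}^N U_1^{[\xi_1(p_n)]} \cdots U_k^{[\xi_k(p_n)]} f = f^* = \int f \, d\mu,
\]
with convergence in the norm of $L^2$, as required. There is essentially no obstacle here beyond correctly invoking Theorem \ref{ergodic}; all nontrivial analytic content is already packaged in that theorem and, ultimately, in Proposition \ref{ud lemma}(i) together with the spectral theorem.
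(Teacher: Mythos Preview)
Your proposal is correct and is exactly the intended derivation: the paper states this corollary immediately after Theorem \ref{ergodic} without proof, and your argument---passing to the Koopman operators $U_i f = f\circ T^{\mathbf{e}_i}$ and using ergodicity to identify $\mathcal{H}_{inv}$ with the constants so that $f^* = \int f\,d\mu$---is precisely the standard unpacking.
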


\begin{Theorem}
\label{A}
Let $\xi_1(x), \dots , \xi_k(x) \in {\bf H}$ such that $\sum_{i=1}^k b_i \xi_i(x) \in {\bf H}$ for any $(b_1, \dots, b_k) \in \mathbb{R}^k \backslash \{(0, 0, \dots, 0)\}$. 
Let $L: \mathbb{Z}^k \rightarrow \mathbb{Z}^m$ be a linear transformation and $(\psi_1(n), \dots, \psi_m(n)) = L([\xi_1(n)], \dots, [\xi_k(n)])$. 
Let $T_1, T_2, \dots , T_m$ be commuting, invertible measure preserving transformations on a probability space $(X, \mathcal{B}, \mu)$. 
Then, for any $A \in \mathcal{B}$ with $\mu(A) > 0$, one has 
$$\lim_{N \rightarrow \infty} \frac{1}{N} \sum_{n=1}^{ N} \mu(A \cap T_1^{-\psi_1(p_{n})} \cdots T_m^{-\psi_m(p_n)} A) \geq \mu^2(A).$$
\end{Theorem}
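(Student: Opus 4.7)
The plan is to reduce the statement to Theorem \ref{ergodic} by grouping the composition $T_1^{\psi_1(n)} \cdots T_m^{\psi_m(n)}$ according to the $[\xi_j(n)]$ rather than the $\psi_i(n)$. Writing the linear transformation $L$ as an integer matrix $(l_{ij})_{1 \le i \le m,\, 1 \le j \le k}$, so that $\psi_i(n) = \sum_{j=1}^k l_{ij}\,[\xi_j(n)]$, one notes the algebraic identity
$$T_1^{\psi_1(n)} \cdots T_m^{\psi_m(n)} \;=\; \prod_{j=1}^k \bigl( T_1^{l_{1j}} \cdots T_m^{l_{mj}} \bigr)^{[\xi_j(n)]} \;=\; S_1^{[\xi_1(n)]} \cdots S_k^{[\xi_k(n)]},$$
where $S_j := T_1^{l_{1j}} \cdots T_m^{l_{mj}}$ are commuting invertible measure preserving transformations of $X$. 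This rewriting is the key setup move, after which the Cesàro average in the theorem becomes exactly one to which Theorem \ref{ergodic} applies.

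Next I would let $U_j$ denote the Koopman operator on $\mathcal{H} := L^2(X, \mathcal{B}, \mu)$ induced by $S_j$, set $f := \mathbf{1}_A \in \mathcal{H}$, and apply Theorem \ref{ergodic} to the commuting unitary operators $U_1, \dots, U_k$. This yields strong convergence of $\frac{1}{N} \sum_{n=1}^N U_1^{[\xi_1(p_n)]} \cdots U_k^{[\xi_k(p_n)]} f$ to the projection $f^*$ of $f$ onto $\mathcal{H}_{inv} = \{g \in \mathcal{H} : U_j g = g \text{ for all } j\}$. Since strong convergence in $\mathcal{H}$ implies convergence of inner products with the fixed vector $f$, and since $f - f^* \perp \mathcal{H}_{inv} \ni f^*$,
$$\lim_{N \to \infty} \frac{1}{N} \sum_{n=1}^N \mu\bigl(A \cap T_1^{-\psi_1(p_n)} \cdots T_m^{-\psi_m(p_n)} A\bigr) \;=\; \langle f^*, f \rangle \;=\; \|f^*\|_{\mathcal{H}}^2.$$

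To finish, I would note that the constant function $1$ belongs to $\mathcal{H}_{inv}$, so $\int f^*\, d\mu = \langle f^*, 1\rangle = \langle f, 1 \rangle = \mu(A)$, and Cauchy--Schwarz (equivalently Jensen's inequality for the conditional expectation onto the $\sigma$-algebra of $(S_1, \dots, S_k)$-invariant sets) then gives $\|f^*\|_{\mathcal{H}}^2 \geq \mu(A)^2$. The genuine analytic content --- the equidistribution of the subpolynomial orbits $([\xi_1(p_n)], \dots, [\xi_k(p_n)])$ in a precise spectral sense --- is already packaged in Theorem \ref{ergodic}, which in turn rested on Theorem \ref{main}; consequently the only step requiring care here is the bookkeeping reduction in the first paragraph that produces the correct commuting family $S_1, \dots, S_k$, and no further obstacle is expected.
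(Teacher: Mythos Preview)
Your argument is correct. The reduction $T_1^{\psi_1(n)}\cdots T_m^{\psi_m(n)} = S_1^{[\xi_1(n)]}\cdots S_k^{[\xi_k(n)]}$ with $S_j = T_1^{l_{1j}}\cdots T_m^{l_{mj}}$ is valid because the $T_i$ commute, and once this is done Theorem~\ref{ergodic} applies verbatim to the Koopman operators of the $S_j$. The computation $\langle f^*,f\rangle = \|f^*\|^2 \ge \bigl(\int f^*\,d\mu\bigr)^2 = \mu(A)^2$ is the standard finishing step.

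This differs from the paper's proof, which works on the spectral side rather than the operator side. The paper invokes Bochner--Herglotz to produce the spectral measure $\nu$ of $\mathbf 1_A$ on $\mathbb{T}^m$, pushes $\nu$ forward to $\mathbb{T}^k$ via the dual map $L^t$, and then applies Proposition~\ref{ud lemma} directly to compute the limit of the averages of $\hat\nu'([\xi_1(p_n)],\dots,[\xi_k(p_n)])$ as $\nu'\{0\}\ge\nu\{0\}\ge\mu(A)^2$. The two arguments are dual: you absorb $L$ into new transformations $S_j$ and quote Theorem~\ref{ergodic}; the paper absorbs $L$ into a new measure $\nu'$ and essentially reruns the spectral calculation behind Theorem~\ref{ergodic}. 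Your route is slightly more economical, since it reuses Theorem~\ref{ergodic} as a black box rather than repeating its mechanism, while the paper's route makes the role of the spectral measure more explicit and yields the identity $\lim = \nu'\{0\}$ rather than just the lower bound.
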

\begin{proof}
By Theorem \ref{BH} there exists a positive measure $\nu$ on $\mathbb{T}^m$ such that 
$$\mu(A \cap T_1^{-n_1} \cdots T_m^{-n_m} A) = \int e^{2 \pi i (n_1 \gamma_1 + \cdots n_m \gamma_m)} \, d \nu (\gamma_1, \cdots, \gamma_m).$$
Also one can see that $ \nu \{(0,\dots, 0)\} \geq \mu(A)^2.$
Let $L^t: \mathbb{T}^m \rightarrow \mathbb{T}^k$ be defined by $\bold{k} \cdot L^t( \bold{x}) = L(\bold{k}) \cdot \bold{x}$, where $\bold{k} \in \mathbb{Z}^k$  and $\bold{x} \in \mathbb{T}^m$. 
Denote by $\nu'$ be the image of $\nu$ under $L^t$. 
Then $$ \hat{\nu}'(\bold{k}) = \hat{\nu}(L(\bold{k})) \quad \text{and} \quad \nu'\{ (0, \dots, 0) \} \geq \nu\{ (0, \dots, 0) \}.$$
Moreover, by Proposition \ref{ud lemma},
$$\lim_{N \rightarrow \infty} \frac{1}{N} \sum_{n=1}^N \hat{\nu}' ([\xi_1(p_n)], \dots, [\xi_k(p_n)]) = \nu'\{(0, \dots, 0)\}.$$
Thus,
\begin{align*}
&\lim_{N \rightarrow \infty} \frac{1}{N} \sum_{n=1}^{ N} \mu(A \cap T_1^{-\psi_1(p_{n})} \cdots T_m^{-\psi_m(p_n)} A) = \lim_{N \rightarrow \infty} \frac{1}{N} \sum_{n=1}^n \hat{\nu} (\psi_1(p_n), \dots, \psi_m(p_n)) \\
&= \lim_{N \rightarrow \infty} \frac{1}{N} \sum_{n=1}^N \hat{\nu}' ([\xi_1(p_n)], \dots, [\xi_k(p_n)]) = \nu'\{(0, \dots, 0)\} \geq \nu\{(0, \dots, 0)\} \geq \mu^2(A). \qedhere
\end{align*}

\end{proof}

The next result follows from Theorem \ref{A} with the help of  Furstenberg's correspondence principle. (See Proposition \ref{correspondence}.) 
\begin{Corollary}
\label{prop}
Let $\psi_1(x), \dots , \psi_k(x)$ be as in Theorem \ref{A}.
If $E \subset \mathbb{Z}^k$ with ${d^*}(E) > 0$, then there exists a prime $p$ such that $(\psi_1(p), \cdots , \psi_k(p) ) \in E - E$. 
Moreover,
$$\liminf_{N \rightarrow \infty} \frac{ | \{ p \leq N:   (\psi_1(p), \dots , \psi_k(p) ) \in E - E\} | }{ \pi(N) } \geq {d^*}(E)^2.$$
\end{Corollary}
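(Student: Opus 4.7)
The plan is to derive Corollary \ref{prop} as a routine consequence of Theorem \ref{A} via Furstenberg's correspondence principle (Proposition \ref{correspondence}). Since $d^*(E) > 0$, the correspondence principle produces a probability space $(X, \mathcal{B}, \mu)$, commuting invertible measure preserving transformations $T_1, \ldots, T_k$, and a set $A \in \mathcal{B}$ with $\mu(A) = d^*(E)$, such that for every $\mathbf{n} \in \mathbb{Z}^k$,
\begin{equation*}
d^*(E \cap (E - \mathbf{n})) \;\geq\; \mu(A \cap T^{-\mathbf{n}} A).
\end{equation*}

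Next, I would apply Theorem \ref{A} to this system with $m = k$ (taking $L$ to be the identity, so that the $\psi_i$ of Corollary \ref{prop} match the $\psi_i$ of Theorem \ref{A}). This yields
\begin{equation*}
\lim_{N \to \infty} \frac{1}{N} \sum_{n=1}^{N} \mu\bigl(A \cap T_1^{-\psi_1(p_n)} \cdots T_k^{-\psi_k(p_n)} A\bigr) \;\geq\; \mu(A)^2 \;=\; d^*(E)^2.
\end{equation*}
Combining this with the correspondence inequality above, applied termwise with $\mathbf{n}_n := (\psi_1(p_n), \ldots, \psi_k(p_n))$, gives
\begin{equation*}
\liminf_{N \to \infty} \frac{1}{N} \sum_{n=1}^{N} d^*\bigl(E \cap (E - \mathbf{n}_n)\bigr) \;\geq\; d^*(E)^2.
\end{equation*}

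To finish, I would observe that $d^*(E \cap (E - \mathbf{n})) \leq 1$ always, and vanishes whenever $\mathbf{n} \notin E - E$ (since then $E \cap (E - \mathbf{n}) = \emptyset$). Hence the left-hand sum is bounded above by the counting function
\begin{equation*}
\frac{|\{\, n \leq N : \mathbf{n}_n \in E - E \,\}|}{N},
\end{equation*}
and re-indexing by primes $p = p_n \leq p_N$ (so that the denominator $N$ becomes $\pi(p_N)$) converts this into the desired inequality
\begin{equation*}
\liminf_{N \to \infty} \frac{|\{\, p \leq N : (\psi_1(p), \ldots, \psi_k(p)) \in E - E \,\}|}{\pi(N)} \;\geq\; d^*(E)^2.
\end{equation*}
The first assertion (existence of at least one such prime) is immediate from positivity of this liminf.

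There is no real obstacle in this argument beyond bookkeeping: Theorem \ref{A} supplies the ergodic lower bound, the correspondence principle transfers it to the combinatorial setting, and the passage from averages over indexed primes to relative density in $[1,N]$ against $\pi(N)$ is purely formal. The only minor point requiring care is that one must use $d^* \leq 1$ (rather than $d^* \leq d^*(E)$) to pass from the averaged density inequality to a statement about the counting measure of primes.
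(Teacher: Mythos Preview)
Your proposal is correct and follows exactly the approach the paper indicates: the paper simply states that the corollary follows from Theorem~\ref{A} together with Furstenberg's correspondence principle (Proposition~\ref{correspondence}), without spelling out the details you have supplied. The only quibble is the parenthetical about ``taking $L$ to be the identity'': the functions $\psi_1,\dots,\psi_k$ in the corollary are already given as in Theorem~\ref{A} (with whatever $L$ comes attached), so you apply Theorem~\ref{A} directly with $m=k$ rather than re-choosing $L$; this is purely notational and does not affect the argument.
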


\subsection{Nice $FC^+$ sets}\mbox{}

Before stating the main result of this subsection, we will define some relevant notions.
For $\bold{d} = (d_1, d_2, \dots, d_k) \in \mathbb{Z}^k$, we write $|\bold{d}| = \max_{1 \leq i \leq k} |d_i|$. 
Let $D$ be an infinite subset of $\mathbb{Z}^k$. 
We write $D = \{ \bold{d}_n: n \in \mathbb{N}\}$ with the convention that $\bold{d}_{n_1} \ne \bold{d}_{n_2}$ for $n_1 \ne n_2$ and $|\bold{d}_n|$ is non-decreasing.

\begin{Definition}\mbox{}

\begin{enumerate}
\item A set $D \subset \mathbb{Z}^k$ is a {\bf{set of recurrence}} 
if given any measure preserving $\mathbb{Z}^k$-action $T=(T^{\bold{m}})_{(\bold{m} \in \mathbb{Z}^k)}$ on a probability space $(X, \mathcal{B}, \mu)$ 
and any set $A \in \mathcal{B}$ with $\mu(A) > 0$, 
there exists $\bold{d} \in D$ $(\bold{d} \ne 0)$ such that 
$$\mu(A \cap T^{-\bold{d}} A) > 0. $$
\item A set $D \subset \mathbb{Z}^k$ is a {\bf{set of nice recurrence}} 
if given any measure preserving $\mathbb{Z}^k$-action $T=(T^{\bold{m}})_{(\bold{m} \in \mathbb{Z}^k)}$ on a probability space $(X, \mathcal{B}, \mu)$, 
any set $A \in \mathcal{B}$ with $\mu(A) > 0$ and any $\epsilon >0$, we have 
$$\mu(A \cap T^{-\bold{d}} A) \geq \mu^2(A) - \epsilon $$
for infinitely many $\bold{d} \in D$.
\end{enumerate}
\end{Definition}

\begin{Definition}[cf. Definition 1.2.1 in \cite{BL}] 
A subset $D$ of $\mathbb{Z}^k \backslash \{0\}$ is {\bf{a van der Corput set}} ({\bf{vdC set}}) 
if for any family $(u_{\bold{n}})_{\bold{n} \in \mathbb{Z}^k}$ of complex numbers of modulus $1$ such that 
$$\forall \bold{d} \in D, \,\,  
\lim_{N_1, \dots, N_k \rightarrow \infty} \frac{1}{N_1 \cdots N_k} \sum_{\bold{n} \in \prod_{i=1}^k [0, N_i)} u_{\bold{n}+\bold{d}} \overline{u_{\bold{n}}} = 0,$$
we have $$\lim_{N_1, \dots, N_k \rightarrow \infty} \frac{1}{N_1 \cdots N_k } \sum_{\bold{n} \in \prod_{i=1}^k [0, N_i)} u_{\bold{n}} = 0.$$
\end{Definition}

\begin{Definition}
An infinite subset $D$ of $\mathbb{Z}^k$ is a {\bf {nice $FC^+$ set}} if for any positive finite measure $\sigma$ on $\mathbb{T}^k$,
$$\sigma( \{ (0,0, \dots, 0) \} ) \leq \limsup_{|\bold{d}| \rightarrow \infty, \bold{d} \in D} |\hat{\sigma}(\bold{d})|. $$
\end{Definition}
It is known that every nice $FC^+$ set is a vdC set and a set of nice recurrence (see Section 3.5 in \cite{BL} or Remark 4 in \cite{BKMST}). 

Let $P_1(x), \dots,P_l(x) \in \mathbb{Z}[x]$ with $P_i(0)=0$ for all $1 \leq i \leq l$ and let $\xi_1 (x), \dots, \xi_k(x) \in {\bf H}$  such that $\sum_{i=1}^k b_i \xi_i(x) \in {\bf H}$ for any $(b_1, \dots, b_k) \in \mathbb{Z}^k \backslash \{(0, 0, \dots, 0)\}$. Let $L$ be a non-zero linear map $\mathbb{Z}^{l+k} \rightarrow \mathbb{Z}^m.$ 
Denote 
\begin{equation}
\label{Dset}
\begin{split}
 {D}_{-1} &=  \{ \left(  p-1,  (p-1)^2, \cdots , (p-1)^l, [\xi_1(p)], \cdots , [\xi_k (p)]  \right) | \, p \in\mathcal{P} \}, \\
 {D}_{1} &=  \{ \left(  p+1, (p+1)^2, \cdots , (p+1)^l, [\xi_1(p)], \cdots , [\xi_k (p)]  \right) | \, p \in\mathcal{P} \}.
\end{split}
\end{equation}
The main result in this subsection is following. 
\begin{Theorem}
\label{sarkozy type}
 $\bold{D}_1$ and $\bold{D}_{-1}$ are nice $FC^+$ sets in $\mathbb{Z}^{m}$, and so they are vdC sets and also sets of nice recurrence.
\end{Theorem}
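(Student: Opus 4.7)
Fix a positive finite Borel measure $\sigma$ on $\mathbb{T}^m$ and $\epsilon>0$; the strategy is to exhibit $\bold{d}\in\bold{D}_{\pm 1}$ of arbitrarily large norm with $|\hat{\sigma}(\bold{d})|\geq \sigma(\{\bold{0}\})-\epsilon$, which gives the nice $FC^+$ property (the vdC and nice-recurrence statements are then automatic from the general theory cited in the text). Since $\sigma$ has at most countably many atoms, I decompose $\sigma=\sigma'+\sigma''$, where $\sigma'=\sum_{i=0}^{r}\alpha_i\delta_{\bold{\beta}_i}$ collects the atom at $\bold{0}$ (with $\bold{\beta}_0=\bold{0}$, $\alpha_0=\sigma(\{\bold{0}\})$) together with finitely many other atoms of largest weight, and $\|\sigma''\|<\epsilon$. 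If $\bold{d}$ satisfies $|e(\bold{d}\cdot\bold{\beta}_i)-1|<\delta$ for each $i=1,\dots,r$, then $|\hat\sigma(\bold{d})|\geq \sigma(\{\bold{0}\})-\delta\,\sigma(\mathbb{T}^m)-\epsilon$, and letting $\delta,\epsilon\to 0$ concludes the argument. The task is thus to produce arbitrarily large $\bold{d}\in\bold{D}_{\pm 1}$ simultaneously driving every $\bold{d}\cdot\bold{\beta}_i$ close to $0 \pmod 1$.

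For this I would analyze the distribution of the vector $\bold{y}_n=(\bold{d}_n\cdot\bold{\beta}_1,\dots,\bold{d}_n\cdot\bold{\beta}_r)\in\mathbb{T}^r$ as $n\to\infty$. For any $\bold{a}\in\mathbb{Z}^r$, setting $\bold{\eta}=\sum_i a_i\bold{\beta}_i$, the inner product $\bold{a}\cdot\bold{y}_n=\bold{d}_n\cdot\bold{\eta}$ has the form $Q(p_n)+\sum_{j=1}^{k}\gamma_j[\xi_j(p_n)]$ for some polynomial $Q\in\mathbb{R}[x]$ and scalars $\gamma_j\in\mathbb{R}$ determined by $\bold{\eta}$ and $L$. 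Proposition \ref{ud lemma}(ii) shows that the Cesaro average of $e(\bold{a}\cdot\bold{y}_n)$ vanishes whenever some non-constant coefficient of $Q$ or some $\gamma_j$ is irrational. In the remaining rational case --- all coefficients having common denominator $q$ --- the limit is determined by the joint distribution of $p_n \pmod q$ (equidistributed on $(\mathbb{Z}/q)^{\times}$ by Dirichlet's theorem and Lemma \ref{SW}) and of $([\xi_j(p_n)])_{j=1}^{k} \pmod q$ (equidistributed on $\mathbb{Z}_q^{k}$ by the argument establishing Proposition \ref{ud lemma}(i)). Together these show that $\bold{y}_n$ equidistributes on $\mathbb{T}^r$ with respect to a probability measure $\mu$ that factors as the convolution of (i) a finitely-supported measure on $q^{-1}\mathbb{Z}^r/\mathbb{Z}^r$ charging $\bold{0}$ with mass at least $1/(\phi(q)q^k)$ (realized along primes $p_n\equiv\mp 1\pmod q$ with $[\xi_j(p_n)]\equiv 0\pmod q$ for every $j$) and (ii) the Haar measure of a closed subgroup of $\mathbb{T}^r$ generated by the irrational linear combinations. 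Since both factors charge every neighborhood of $\bold{0}$, so does $\mu$; consequently the set of $n$ for which every coordinate of $\bold{y}_n$ lies within $\delta$ of $0\in\mathbb{T}$ has positive lower density, and any such $\bold{d}_n$ with $n$ sufficiently large furnishes the desired element of $\bold{D}_{\pm 1}$.

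The delicate step --- and the main obstacle --- is justifying the convolution factorization of $\mu$: one must verify that restricting to the positive-density subset of $n$ prescribed by the rational congruence conditions does not destroy the equidistribution of the irrational linear combinations. This amounts to a Siegel--Walfisz-type uniform refinement of Proposition \ref{ud lemma}, namely that for each non-principal Dirichlet character $\chi$ modulo $q$ and every $\bold{\eta}$ whose expansion has an irrational coefficient,
\[
\sum_{p\leq N}\chi(p)\,e(\bold{d}_n\cdot\bold{\eta})=o(\pi(N)).
\]
Such estimates can be obtained by incorporating $\chi(p)$ into the Vaughan-identity arguments of Section 3 --- working throughout with the twisted sums $\sum_{n}\chi(n)\Lambda(n)\,e(\cdots)$ --- and invoking Lemma \ref{SW} in place of the prime number theorem where the latter enters. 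Once this uniform version is in place, the convolution factorization of $\mu$ holds, $\bold{0}\in\operatorname{supp}(\mu)$ follows, and the proof is complete.
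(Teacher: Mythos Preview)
Your argument has a genuine gap at the very first step. The decomposition $\sigma=\sigma'+\sigma''$ with $\sigma'$ a finite sum of point masses and $\|\sigma''\|<\epsilon$ is impossible whenever $\sigma$ has a continuous part of mass $\geq\epsilon$: removing finitely many atoms never touches the continuous part, so $\|\sigma''\|$ is bounded below by the mass of $\sigma_c$. Thus your reduction to simultaneous Diophantine approximation of finitely many $\bold{\beta}_i$ only handles purely atomic $\sigma$, and the nice $FC^+$ inequality $\limsup|\hat\sigma(\bold d)|\geq\sigma(\{\bold 0\})$ is left completely open for general $\sigma$. (Note that $\hat\sigma_c(\bold d)$ need not tend to $0$ --- Riemann--Lebesgue fails for singular continuous measures --- so the continuous contribution can genuinely cancel the atomic one along any particular $\bold d$.)

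The paper avoids this by invoking Lemma~\ref{vdc}, a black-box criterion that already encodes the correct treatment of the continuous part via Wiener's theorem: it suffices to exhibit, for every $q$, a sequence in $D^{(q!)}$ along which $\bold x\cdot\bold d_n$ is uniformly distributed $\bmod\,1$ whenever $\bold x$ has an irrational coordinate. This also dissolves the ``delicate step'' you flag. Rather than proving a Siegel--Walfisz-twisted version of Proposition~\ref{ud lemma} with $\chi(p)$ inserted into the Vaughan machinery, the paper handles the congruence restriction by the elementary orthogonality identity (Lemma~\ref{Mo}): the indicator of $D_1^{(q!)}$ is expanded as a finite character sum, converting the restricted average into a finite linear combination of unrestricted averages of the form $\frac{1}{N}\sum_n e(\bold d_n\cdot(m\bold x+\text{rational shift}))$, each of which vanishes by Proposition~\ref{ud lemma}(ii). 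The same device (see Lemma~\ref{subsequence}) shows that $D_1^{(q!)}$ has positive relative density. So the route through Lemma~\ref{vdc} is both shorter and requires no new exponential-sum input beyond what Section~3 already provides.
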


Let $L: \mathbb{Z}^{l+k} \rightarrow \mathbb{Z}^m$ be a non-zero linear map and let $\bold{D}_i = L(D_i)$ $(i = \pm 1)$.
Note that $\bold{D}_i$ is an infinite set since $(x \pm i), (x \pm i)^2, \dots, (x \pm i)^l, \xi_1(x), \cdots, \xi_k(x)$ are linearly independent over $\mathbb{Z}$.  

\begin{Corollary}
Let $P_1(x), \dots, P_l(x) \in \mathbb{Z}[x]$ with $P_i(0)=0$ for all $1 \leq i \leq l$. Then the following sets are nice $FC^+$ sets:
\begin{equation*}
\label{disply:polynomial}
\begin{split}
  \{ \left( P_1( p-1), P_2 (p-1), \cdots , P_l(p-1), [\xi_1(p)], \cdots , [\xi_k (p)]  \right) | \, p \in\mathcal{P} \}, \\
 \{ \left(  P_1(p+1), P_2(p+1), \cdots , P_l(p+1), [\xi_1(p)], \cdots , [\xi_k (p)]  \right) | \, p \in\mathcal{P} \}.
\end{split}
\end{equation*}
\end{Corollary}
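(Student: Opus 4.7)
The plan is to deduce the corollary as a direct consequence of Theorem \ref{sarkozy type} by realising each of the two displayed sets as the image of $D_{\pm 1}$, built with a sufficiently large value of the parameter $l$ appearing in \eqref{Dset}, under a non-zero integer linear map.

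First I would set $l' = \max_{1 \leq i \leq l} \deg P_i$ (taking $l' = 1$ in the degenerate situation that every $P_i$ is the zero polynomial). Using the hypothesis $P_i(0)=0$, I can then write each polynomial as $P_i(x) = \sum_{j=1}^{l'} a_{ij} x^j$ with $a_{ij} \in \mathbb{Z}$, where we pad with $a_{ij} = 0$ whenever $j > \deg P_i$. Define the linear map $L : \mathbb{Z}^{l'+k} \to \mathbb{Z}^{l+k}$ by
\begin{equation*}
L(y_1, \dots, y_{l'}, z_1, \dots, z_k) = \left( \sum_{j=1}^{l'} a_{1j} y_j,\ \dots,\ \sum_{j=1}^{l'} a_{lj} y_j,\ z_1, \dots, z_k \right).
\end{equation*}
The map $L$ is manifestly non-zero, since its restriction to the last $k$ coordinates is the identity; the only exception is the uninteresting case $k=0$ with all $P_i$ identically zero, which I would simply discard.

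Next I would form the sets $D_{\pm 1} \subset \mathbb{Z}^{l' + k}$ according to \eqref{Dset} using the parameter $l'$ together with the same functions $\xi_1, \dots, \xi_k$. A direct verification shows that for every prime $p$,
\begin{equation*}
L\bigl( (p \pm 1), (p \pm 1)^2, \dots, (p \pm 1)^{l'}, [\xi_1(p)], \dots, [\xi_k(p)] \bigr) = \bigl( P_1(p \pm 1), \dots, P_l(p \pm 1), [\xi_1(p)], \dots, [\xi_k(p)] \bigr),
\end{equation*}
so the two displayed sets in the statement coincide exactly with $L(D_{-1})$ and $L(D_{1})$, respectively. Applying Theorem \ref{sarkozy type} with this choice of $L$ then immediately produces the desired conclusion that both sets are nice $FC^+$ sets. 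Since all the substantive analytic and Fourier-theoretic work has already been absorbed into the proof of Theorem \ref{sarkozy type}, I do not expect any genuine obstacle here; the only item that needs to be checked is the routine linear-algebraic identity displayed above, and the non-vanishing of $L$.
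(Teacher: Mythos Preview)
Your proposal is correct and matches the paper's intended argument exactly: the corollary is stated without proof in the paper precisely because it follows immediately from Theorem \ref{sarkozy type} by choosing the linear map $L$ so as to produce $(P_1(p\pm 1),\dots,P_l(p\pm 1))$ from the powers $(p\pm 1),\dots,(p\pm 1)^{l'}$, which is just what you do (and the paper reiterates this in Remark \ref{end}). Your handling of the degenerate case and the verification that $L$ is non-zero are fine.
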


\noindent Note that for any $(c_1, \dots, c_l, d_1, \dots, d_k) \in \mathbb{Z}^k \backslash \{(0, 0, \dots, 0)\}$, 
$$\left| c_1 (x-i) + \cdots + c_l(x-i)^l + d_1 [\xi_1(x)] + \cdots + d_l [\xi_k(x)]\right|$$
 is eventually increasing to $\infty$, so there exists a finite set $F \subset D_i$, $(0, \dots, 0) \notin L(D_i \backslash F)$. 
Therefore the following result implies that in order to prove Theorem \ref{sarkozy type}, it is enough to show that $D_1$ and $D_{-1}$ are nice $FC^+$ sets.  
\begin{Lemma} [cf.\ \cite{BL} Corollary 1.15]
Let $D \subset \mathbb{Z}^d$ be a nice $FC^+$ set.
Let $L: \mathbb{Z}^d \rightarrow \mathbb{Z}^e$ be a linear transformation such that $(0,0,\dots, 0) \notin L(D)$. Then $L(D)$ is a nice $FC^+$ set. 
\end{Lemma}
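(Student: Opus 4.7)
The plan is to use Pontryagin duality to transport the problem on $L(D)\subset\mathbb{Z}^e$ back to the given assertion on $D\subset\mathbb{Z}^d$, where the hypothesis that $D$ is a nice $FC^+$ set can be applied directly. Fix a positive finite Borel measure $\tau$ on $\mathbb{T}^e$; the task is to show that
$$\tau(\{0\}) \leq \limsup_{|\bold{e}|\to\infty,\,\bold{e}\in L(D)}|\hat\tau(\bold{e})|.$$

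First I would dualize the linear map $L:\mathbb{Z}^d\to\mathbb{Z}^e$ to a continuous group homomorphism $\hat L:\mathbb{T}^e\to\mathbb{T}^d$ characterized by $\bold{n}\cdot\hat L(\bold{x})=L(\bold{n})\cdot\bold{x}$ for all $\bold{n}\in\mathbb{Z}^d$, $\bold{x}\in\mathbb{T}^e$, and define $\sigma:=\hat L_*\tau$, the push-forward measure on $\mathbb{T}^d$. A routine change of variables yields $\hat\sigma(\bold{n})=\hat\tau(L(\bold{n}))$ for every $\bold{n}\in\mathbb{Z}^d$, and since $\hat L(0)=0$ we have $\sigma(\{0\})=\tau(\hat L^{-1}(\{0\}))\geq\tau(\{0\})$. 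Applying the nice $FC^+$ property of $D$ to $\sigma$ then gives the chain
$$\tau(\{0\}) \leq \sigma(\{0\}) \leq \limsup_{|\bold{d}|\to\infty,\,\bold{d}\in D}|\hat\sigma(\bold{d})| = \limsup_{|\bold{d}|\to\infty,\,\bold{d}\in D}|\hat\tau(L(\bold{d}))|.$$

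The final step is to transfer this last limsup, ranging over $D$, into one ranging over $L(D)$, and this is where I expect the main technical work to lie. I would extract a sequence $\bold{d}_n\in D$ with $|\bold{d}_n|\to\infty$ realizing the right-hand side above. The easy case is when $|L(\bold{d}_n)|\to\infty$ along some subsequence, for then the desired inequality follows at once since $L(\bold{d}_n)\in L(D)$. The hard case is when $(L(\bold{d}_n))$ stays bounded: the hypothesis $0\notin L(D)$ combined with pigeonhole then forces $L(\bold{d}_n)$ to be constantly equal to some nonzero $\bold{e}^*\in L(D)$ along a subsequence, so that the fiber $D\cap L^{-1}(\bold{e}^*)$ is an infinite subset of $D$ lying in a single coset $\bold{d}_1+\ker L$. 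I would rule this pathological scenario out by testing the nice $FC^+$ property of $D$ against an auxiliary measure of the form $\sigma_0=\delta_0+c\,\delta_{\bold{x}_0}$ with $\bold{x}_0\in(\ker L)^\perp\subset\mathbb{T}^d$ (noting that $\bold{d}_1\notin\ker L$ since $L(\bold{d}_1)=\bold{e}^*\neq 0$, so the character $\bold{x}\mapsto\bold{d}_1\cdot\bold{x}$ is nontrivial on $(\ker L)^\perp$) chosen so that $|\hat\sigma_0|$ is forced strictly below $\sigma_0(\{0\})$ along the coset; the resulting spectral contradiction eliminates Case II and leaves only the first, routine case, completing the proof.
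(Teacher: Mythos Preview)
Your setup is exactly the paper's: dualize $L$ to $\hat L:\mathbb{T}^e\to\mathbb{T}^d$, push $\tau$ forward to $\sigma=\hat L_*\tau$, note $\hat\sigma(\bold d)=\hat\tau(L(\bold d))$ and $\sigma(\{0\})\ge\tau(\{0\})$, and invoke the nice $FC^+$ property of $D$. The paper then simply writes
\[
\limsup_{\bold d\in D,\,|\bold d|\to\infty}|\hat\nu'(\bold d)| \;=\; \limsup_{\bold k\in L(D),\,|\bold k|\to\infty}|\hat\nu(\bold k)|
\]
without comment; you are right to flag that this identification of the two limsups is where the content lies.

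However, your proposed resolution of Case~II does not work. You claim that if some fiber $D\cap L^{-1}(\bold e^*)$ is infinite then one obtains a ``spectral contradiction'' with $D$ being nice $FC^+$ by testing against $\sigma_0=\delta_0+c\,\delta_{\bold x_0}$. But the nice $FC^+$ inequality for $\sigma_0$ bounds $\sigma_0(\{0\})$ by the limsup of $|\hat\sigma_0|$ over \emph{all} of $D$, not over the single coset $\bold d_1+\ker L$. Even if $|\hat\sigma_0|$ is forced strictly below $\sigma_0(\{0\})$ along that coset, the rest of $D$ may well supply large values of $|\hat\sigma_0|$, so no contradiction results. Concretely, $D=\mathbb{N}^2\subset\mathbb{Z}^2$ with $L(m,n)=m$ is nice $FC^+$, satisfies $0\notin L(D)$, and has every fiber $L^{-1}(m)\cap D$ infinite: Case~II occurs yet nothing goes wrong. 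So infinite fibers are not themselves pathological, and your auxiliary-measure argument cannot eliminate them.

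In the paper's actual application this issue does not arise: the paragraph immediately preceding the lemma observes that for the specific sets $D_i$ under consideration, every nonzero linear functional tends to infinity along $D_i$, hence $|L(\bold d)|\to\infty$ whenever $|\bold d|\to\infty$ in $D_i$, and the two limsups genuinely coincide. If you want a self-contained proof of the lemma in the stated generality you will need a different idea for the passage from $D$ to $L(D)$ (or to consult the cited source \cite{BL}); the route via ``rule out infinite fibers'' is a dead end.
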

\begin{proof}
The proof is analogous to the proof of Theorem \ref{A}.
Let $\nu$ be a positive measure on $\mathbb{T}^e$. 
Define $L^t: \mathbb{T}^e \rightarrow \mathbb{T}^d$ by $\bold{k} \cdot L^t( \bold{x}) = L(\bold{k}) \cdot \bold{x}$, where $\bold{k} \in \mathbb{Z}^d$  and $\bold{x} \in \mathbb{T}^e$. Let $\nu'$ be the image of $\nu$ under $L^t$. Then $$ \hat{\nu}'(\bold{d}) = \hat{\nu}(L(\bold{d})) \quad \text{and} \quad \nu'\{ (0, \dots, 0) \} \geq \nu\{ (0, \dots, 0) \}.$$
Then
 $$\nu\{(0,\dots,0)\} \leq \nu'\{(0,\dots, 0)\} \leq \limsup_{\bold{d} \in D, |\bold{d}| \rightarrow \infty} |\hat{\nu}'(\bold{d})| 
= \limsup_{\bold{k} \in L(D), |\bold{k}| \rightarrow \infty} |\hat{\nu}(\bold{k})|.\qedhere$$
\end{proof}

We will utilize the following lemmas to show that $D_1$ and $D_{-1}$ are nice $FC^+$ sets. 
\begin{Lemma}
\label{subsequence}
Let $\xi_1(x), \dots , \xi_k(x) \in {\bf H}$ such that $\sum_{i=1}^k b_i \xi_i(x) \in {\bf H}$ for any $(b_1, \dots, b_k) \in \mathbb{Z}^k \backslash \{(0, 0, \dots, 0)\}$.
Let $q \in \mathbb{N}$ and let $t$ be an integer with $0 \leq t \leq q-1$ and $(t,q) =1$. 
Then $(\xi_1(p), \dots, \xi_k(p) )$ is u.d. $\bmod \, 1$ along $p \in (t+ q\mathbb{Z}) \cap \mathcal{P}$.
\end{Lemma}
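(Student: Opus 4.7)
The plan is to verify Weyl's criterion by using the orthogonality of additive characters modulo $q$ to detect the congruence condition, and then to invoke Theorem \ref{main} for each of the resulting exponential sums over all primes up to $N$. By Weyl's criterion, it suffices to prove that for every non-zero $(b_1, \dots, b_k) \in \mathbb{Z}^k$, writing $g(x) = \sum_{i=1}^k b_i \xi_i(x)$,
$$\lim_{N \to \infty} \frac{1}{|\{p \leq N : p \equiv t \, (\bmod \, q)\}|} \sum_{\substack{p \leq N \\ p \equiv t \, (\bmod \, q)}} e(g(p)) = 0.$$
By hypothesis $g \in {\bf H}$, and a direct inspection of the definition of ${\bf H}$ shows that $(g(x) - P(x))/\log x \to \pm \infty$ for every $P \in \mathbb{Q}[x]$: in both alternatives of the definition, $g$ either is of strict type $x^{l+}$ for some $l \geq 1$ (so $g$ strictly dominates every polynomial of degree $\leq l$ and is strictly dominated by $x^{l+1}$) or lies strictly between $\log x$ and $x$, and either way subtracting a rational polynomial cannot compress $g - P$ down to order $\log x$. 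Thus $g$ satisfies condition (3) of Theorem \ref{main}.

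Next I would detect the residue class using Lemma \ref{Mo}:
$$\mathbf{1}_{n \equiv t \, (\bmod \, q)} = \frac{1}{q} \sum_{j=1}^{q} e\!\left( \frac{j(n-t)}{q} \right),$$
which gives
$$\sum_{\substack{p \leq N \\ p \equiv t \, (\bmod \, q)}} e(g(p)) = \frac{1}{q} \sum_{j=1}^{q} e(-jt/q) \sum_{p \leq N} e(h_j(p)), \qquad h_j(x) := g(x) + \tfrac{j}{q} x.$$
For each $j$ and every $P \in \mathbb{Q}[x]$, we have $h_j - P = g - (P - \tfrac{j}{q} x)$ with $P - \tfrac{j}{q} x \in \mathbb{Q}[x]$, so $h_j$ inherits condition (3) of Theorem \ref{main} from $g$. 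Applying the implication $(3) \Rightarrow (2)$ of Theorem \ref{main}, each inner sum $\sum_{p \leq N} e(h_j(p))$ is $o(\pi(N))$, and hence so is $\sum_{p \leq N,\, p \equiv t \,(\bmod\, q)} e(g(p))$.

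To close, I would invoke the Siegel--Walfisz theorem (Lemma \ref{SW}) at the fixed modulus $q$ to obtain $|\{p \leq N : p \equiv t \,(\bmod\, q)\}| \sim \pi(N)/\phi(q)$, so that the $o(\pi(N))$ bound turns into $o(1)$ after normalization, confirming Weyl's criterion. The only point requiring verification is that each $h_j$ still satisfies condition (3) of Theorem \ref{main}, and this is immediate from the closure of $\mathbb{Q}[x]$ under adding $\tfrac{j}{q} x$. There is no substantive obstacle here: this lemma is essentially Theorem \ref{main} combined with additive-character orthogonality on $\mathbb{Z}/q\mathbb{Z}$.
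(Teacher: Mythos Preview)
Your proof is correct and follows essentially the same route as the paper: detect the residue class $p\equiv t\pmod q$ via additive characters, reduce to showing that $(g(p)+\tfrac{j}{q}p)_{p\in\mathcal P}$ is u.d.\ $\bmod\,1$ for each $j$, and use the prime number theorem in arithmetic progressions for the normalization. The only cosmetic difference is that the paper cites Theorem~\ref{thm3} directly (which handles ``function in $\mathbf H$ plus real polynomial'' without needing $h_j\in\mathbf U$), whereas you pass through condition~(3) of Theorem~\ref{main}, which strictly speaking requires $h_j$ to lie in some Hardy field---a standard fact you leave implicit.
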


\begin{proof}
For $(a_1, a_2, \dots, a_k) \ne (0,0, \dots, 0)$ in $\mathbb{Z}^k$, let $f(x) = \sum_{i=1}^k a_i  \xi_i (x)$. Define $A_N = \{ p \leq N:  p \equiv t \bmod q \}$.
Then
\begin{align*}
\frac{1}{|A_N|} \sum\limits_{\substack{ p \leq N \\ p \equiv t \,\,  \bmod q}} e\left( f(p) \right) 
&= \frac{1}{|A_N|}\sum_{p \leq N} e \left( f(p) \right) \frac{1}{q} \sum_{j=1}^q e \left( \frac{(p-t)j}{q} \right) \\
&= \frac{\pi(N)}{|A_N|} \frac{1}{q}  \sum_{j=1}^q  \frac{1}{\pi(N)} \sum_{p \leq N} e \left( f (p) + \frac{j}{q}(p-t) \right)
\end{align*}
and the result follows from the fact that $\lim\limits_{N \rightarrow \infty} \frac{|A_N|}{\pi(N)} = \frac{1}{\phi(q)}$ and $ ( f (p) + \frac{j}{q}(p-t) )_{p \in \mathcal{P}}$ is u.d. $\bmod \, 1$.
\end{proof}

\begin{Lemma}[Lemma 4.1 in \cite{BKMST}]
\label{vdc}
Let $D \subset \mathbb{Z}^k$.  For each $q \in \mathbb{N}$, define 
$$D^{(q!)} := \{ \bold{d} = (d_1, d_2, \dots, d_k) \in E : q! \,\, \textrm{divides} \,\, d_i \,\, \textrm{for all} \,\,\, 1 \leq i \leq k \}.$$
Suppose that, for every $q$, there exists a sequence $(\bold{d}^{q,n})_{n \in \mathbb{N}}$ in $D^{(q!)}$ such that 
\begin{enumerate}[(i)]
\item $(|\bold{d}^{q,n}|)_{n \in \mathbb{N}}$ is non-decreasing and 
\item for any $\bold{x}=(x_1, \cdots , x_k) \in \mathbb{R}^k$, if one of $x_i$ is irrational, the sequence $(\bold{x} \cdot \bold{d}^{q,n})_{n \in \mathbb{N}}$ is uniformly distributed $\bmod \, 1$.
\end{enumerate}
Then $D$ is a nice $FC^+$ set.
\end{Lemma}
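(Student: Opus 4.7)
The goal is to show that, given any positive finite measure $\sigma$ on $\mathbb{T}^k$, the Fourier coefficients $\hat{\sigma}(\bold{d}) = \int e(\bold{d} \cdot \bold{x}) d\sigma(\bold{x})$ satisfy $\sigma(\{\bold{0}\}) \leq \limsup_{\bold{d} \in D,\, |\bold{d}| \to \infty} |\hat{\sigma}(\bold{d})|$. The plan is to exploit the hypothesized sequences $(\bold{d}^{q,n})_n$ to extract, via a Cesaro average of Fourier coefficients, the mass that $\sigma$ places on the $q!$-torsion subgroup $A_q := \{\bold{x} \in \mathbb{T}^k : q! \bold{x} = \bold{0}\}$, and then let $q \to \infty$ to capture all rational points of $\mathbb{T}^k$ — in particular $\bold{0}$.

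First I would form, for a fixed $q$, the finite averages $S_N := \frac{1}{N} \sum_{n=1}^N \hat{\sigma}(\bold{d}^{q,n})$ and rewrite them via Fubini as $\int_{\mathbb{T}^k} G_N(\bold{x})\, d\sigma(\bold{x})$, where $G_N(\bold{x}) := \frac{1}{N} \sum_{n=1}^N e(\bold{d}^{q,n} \cdot \bold{x})$. Then I would partition $\mathbb{T}^k$ into three pieces: the subgroup $A_q$; the set of points with at least one irrational coordinate; and the remaining ``exceptional'' rational points (those in $(\mathbb{Q}/\mathbb{Z})^k \setminus A_q$). On $A_q$, since $q!$ divides every coordinate of $\bold{d}^{q,n}$ and $q! \bold{x} \in \mathbb{Z}^k$, one has $\bold{d}^{q,n}\cdot\bold{x} \in \mathbb{Z}$, so $G_N \equiv 1$. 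On the irrational set, hypothesis (ii) gives $G_N(\bold{x}) \to 0$ pointwise, so bounded convergence (using $|G_N|\leq 1$ and finiteness of $\sigma$) yields a vanishing contribution as $N \to \infty$. The exceptional rational points contribute at most $\sigma((\mathbb{Q}/\mathbb{Z})^k \setminus A_q)$ in modulus, and this error vanishes when $q$ is sent to infinity since $(\mathbb{Q}/\mathbb{Z})^k = \bigcup_q A_q$ and $\sigma$ is finite.

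Combining the three contributions gives the lower bound
$$\liminf_{N \to \infty} |S_N| \;\geq\; \sigma(A_q) - \sigma\big((\mathbb{Q}/\mathbb{Z})^k \setminus A_q\big).$$
On the other hand, $|S_N| \leq \max_{n \leq N} |\hat{\sigma}(\bold{d}^{q,n})|$ as a Cesaro average, and hypothesis (i), together with the observation that $|\bold{d}^{q,n}| \to \infty$ (otherwise $(\bold{d}^{q,n})$ would take only finitely many values in $\mathbb{Z}^k$, forcing $(\bold{x}\cdot\bold{d}^{q,n})$ to have finite range and contradicting (ii)), supplies
$$\liminf_{N \to \infty} |S_N| \;\leq\; \limsup_{n \to \infty} |\hat{\sigma}(\bold{d}^{q,n})| \;\leq\; \limsup_{\bold{d} \in D,\, |\bold{d}| \to \infty} |\hat{\sigma}(\bold{d})|.$$
Letting $q \to \infty$ so that $\sigma(A_q) \uparrow \sigma((\mathbb{Q}/\mathbb{Z})^k)$ and the error term vanishes yields $\sigma((\mathbb{Q}/\mathbb{Z})^k) \leq \limsup_{\bold{d} \in D,\, |\bold{d}| \to \infty} |\hat{\sigma}(\bold{d})|$, and trivially $\sigma(\{\bold{0}\}) \leq \sigma((\mathbb{Q}/\mathbb{Z})^k)$, which completes the argument.

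The main obstacle is handling rational points outside $A_q$, since hypothesis (ii) gives no information about $G_N$ there and such an atom could in principle produce oscillations of modulus close to $1$. The escape is purely measure-theoretic: finiteness of $\sigma$ forces $\sigma((\mathbb{Q}/\mathbb{Z})^k \setminus A_q) \to 0$ as $q \to \infty$, so one never needs a pointwise description of $G_N$ on these bad points — only the crude estimate $|G_N| \leq 1$. A secondary technical point is extracting $|\bold{d}^{q,n}| \to \infty$ from (i) and (ii); this is needed to upgrade the Cesaro bound into a limsup along all of $D$.
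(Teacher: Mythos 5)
Your proof is correct, and since the paper only quotes this lemma from \cite{BKMST} without reproducing a proof, it suffices to note that your argument --- averaging $\hat{\sigma}$ along $(\mathbf{d}^{q,n})_n$, splitting $\mathbb{T}^k$ into the $q!$-torsion points, the points with an irrational coordinate, and the remaining rational points, and then letting $q \to \infty$ --- is exactly the standard proof of this statement. One cosmetic remark: the bound $|S_N| \leq \max_{n \leq N} |\hat{\sigma}(\mathbf{d}^{q,n})|$ only gives the supremum over $n$, so the step you actually need (and correctly invoke) is the Ces\`aro fact $\limsup_{N} \frac{1}{N}\sum_{n \leq N} c_n \leq \limsup_{n} c_n$ for the bounded nonnegative sequence $c_n = |\hat{\sigma}(\mathbf{d}^{q,n})|$.
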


\begin{proof}[Proof of Theorem \ref{sarkozy type}]
We will prove that $D_1$ is a nice $FC^+$ set. 
The proof for $D_{-1}$ is analogous.
Let $$\bold{d}_n =\left(  p_n + 1, \cdots , (p_n + 1)^l, [\xi_1(p_n)], \cdots , [\xi_k (p_n )]  \right).$$
Let $D_1^{(q!)}:= \{ \bold{d} = (d_1, d_2, \dots, d_{l+k}) \in D_1 : q! \,\, \textrm{divides} \,\, d_i \,\, \textrm{for} \,\, 1 \leq i \leq l+k \}$. 

Let us first show that $D_1^{(q!)}$ has positive relative density in $D_1$. 
Consider the partition $\mathcal{P} = \bigcup\limits_{(t,q!) =1}( (t+ q!\mathbb{Z}) \bigcap \mathcal{P})$. 
The relative density of $(t+ q!\mathbb{Z}) \bigcap \mathcal{P}$ in $\mathcal{P}$ is $\frac{1}{\phi(q!)}$.
Now, if $p \in (t + q! \mathbb{Z}) \bigcap \mathcal{P}$, the pair of conditions 
$$ q! | (t + 1)^{c_i} \, (1 \leq i \leq l) \,\, \text{and} \,\, 0 \leq \left\{ \frac{\xi_i (p )}{q!} \right\} < \frac{1}{q!} \, (1 \leq i \leq k)$$
is equivalent to $\left(  P_1 (p + 1), \cdots , P_l (p + 1), [\xi_1 (p)], \dots , [\xi_k (p)]  \right) \in D_1^{(q!)}.$ 
Then $D_1^{(q!)}$ has positive relative density in $D_1$ since $\left( \frac{ \xi_1 (p)}{q!}, \dots ,  \frac{\xi_k(p)}{q!} \right)$ is uniformly distributed $\bmod \, 1$ in $\mathbb{T}^k$ 
along the increasing sequence of primes $p \in t + q! \mathbb{Z}$.

Now let $\bold{x}=(x_1, x_2, \dots, x_{l+k})$, where one of $x_i$ is irrational.
We need to prove that for any non-zero integer $m$,
\begin{equation}
\label{thm4.3}
\lim_{N \rightarrow \infty} \frac{1}{ | \{ n \leq N : \bold{d}_n \in D_1^{(q!)} \} | } \sum_{n \leq N, \bold{d}_n \in D_1^{(q!)} } e (m (\bold{d}_{n} \cdot \bold{x})) =0.
\end{equation}
Then, using Lemma \ref{Mo}, 
\begin{align*}
 &\frac{1}{ | \{ n \leq N : \bold{d}_n \in D_1^{(q!)} \} | } \sum_{n \leq N, \bold{d}_n \in D_1^{(q!)} } e (m (\bold{d}_{n} \cdot \bold{x}))  \\
&= \frac{1}{ | \{ n \leq N : \bold{d}_n \in D_1^{(q!)} \} | } \sum_{n \leq N  } e (m (\bold{d}_{n} \cdot \bold{x})) \frac{1}{(q!)^{l+k}} \sum_{j_1 = 1}^{q!} \cdots \sum_{j_{l+k} = 1}^{q!} e \left(\bold{d}_n \cdot \left(\frac{j_1}{q!}, \cdots , \frac{j_{l+k}}{q!} \right) \right) \\
&= \frac{N}{ | \{ n \leq N : \bold{d}_n \in D_1^{(q!)} \} | } \frac{1}{(q!)^{l+k}} \sum_{j_1 = 1}^{q!} \cdots \sum_{j_{l+k} = 1}^{q!} \frac{1}{N} \sum_{n \leq N}e \left( \bold{d}_n \cdot (m \bold{x} +\left(\frac{j_1}{q!}, \cdots , \frac{j_{l+k}}{q!} \right) \right).
\end{align*}
Then \eqref{thm4.3} holds from Proposition \ref{ud lemma}. 
\end{proof}

\begin{Corollary}
\label{cor nice}
If $E \subset \mathbb{Z}^{m}$ with ${d^*}(E) > 0$, then for any $\epsilon > 0$
\begin{equation*}
 R(E, \epsilon) := \{ \bold{d} \in \bold{D}_i : d^*(E \cap E - \bold{d} ) \geq d^*(E)^2 - \epsilon \} \,\, 
\end{equation*}
 is infinite.
\end{Corollary}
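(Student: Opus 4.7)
The plan is to derive this corollary directly from Theorem \ref{sarkozy type} via the Furstenberg correspondence principle stated as Proposition \ref{correspondence}. The chain of implications we need is: nice $FC^+$ set $\Rightarrow$ set of nice recurrence $\Rightarrow$ combinatorial density statement.

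First I would apply Proposition \ref{correspondence} to the given set $E \subset \mathbb{Z}^m$ with $d^*(E) > 0$. This produces a probability space $(X, \mathcal{B}, \mu)$, commuting invertible measure preserving transformations $T_1, \dots, T_m$ generating a $\mathbb{Z}^m$-action $T$, and a set $A \in \mathcal{B}$ with $\mu(A) = d^*(E)$, such that for every $\bold{d} \in \mathbb{Z}^m$,
\begin{equation*}
d^*(E \cap (E - \bold{d})) \geq \mu(A \cap T^{-\bold{d}} A).
\end{equation*}

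Next, I would invoke Theorem \ref{sarkozy type}, which asserts that $\bold{D}_i$ is a nice $FC^+$ set, and therefore (as noted in the paragraph following the definition of nice $FC^+$ sets) a set of nice recurrence. Applied to the measure preserving $\mathbb{Z}^m$-action $T$ just produced, the set $A$, and the given $\epsilon > 0$, this yields infinitely many $\bold{d} \in \bold{D}_i$ such that
\begin{equation*}
\mu(A \cap T^{-\bold{d}} A) \geq \mu^2(A) - \epsilon = d^*(E)^2 - \epsilon.
\end{equation*}

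Combining these two inequalities, every such $\bold{d}$ satisfies $d^*(E \cap (E - \bold{d})) \geq d^*(E)^2 - \epsilon$, so belongs to $R(E, \epsilon)$. Since there are infinitely many such $\bold{d}$, the set $R(E, \epsilon)$ is infinite, as desired. There is no real obstacle here: the entire content has been absorbed into Theorem \ref{sarkozy type} (whose proof was the genuine hard work), and the remaining argument is a standard transfer from the ergodic statement to the combinatorial one via Furstenberg's correspondence.
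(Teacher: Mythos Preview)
Your proposal is correct and is exactly the intended derivation: the paper states this corollary immediately after Theorem \ref{sarkozy type} without a separate proof, relying on the fact that a nice $FC^+$ set is a set of nice recurrence together with the Furstenberg correspondence principle (Proposition \ref{correspondence}), which is precisely the chain you spell out.
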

 
\subsection{Uniform distribution and sets of recurrence}\mbox{}

Let $\bold{D}_i = L(D_i)$ $(i = \pm 1)$ as in the previous section, that is, 
\begin{equation*}
\begin{split}
 D_{-1} &= \{ \left(  p-1, \dots , (p-1)^l, [\xi_1(p)], \dots , [\xi_k (p)]  \right) | \, p \in\mathcal{P} \}, \\
 D_{1} &= \{ \left(  p+1, \dots , (p+1)^l, [\xi_1(p)], \dots , [\xi_k (p)]  \right) | \, p \in\mathcal{P} \},
\end{split}
\end{equation*}
where 
\begin{itemize}
\item $L: \mathbb{Z}^{l+k} \rightarrow \mathbb{Z}^m$ is a non-zero linear map.
\item $\xi_1 (x), \dots, \xi_k(x) \in {\bf H}$  such that $\sum_{i=1}^k b_i \xi_i(x) \in {\bf H}$ for any $(b_1, \dots, b_k) \in \mathbb{Z}^k \backslash \{(0, 0, \dots, 0)\}$.
\end{itemize}

\begin{Theorem}
\label{recurrence}
Enumerate the elements of $\bold{D}_i$, $(i = \pm 1)$, as follows:
$$\bold{d}_{n,i} = L \left( (p_n + i ), \dots , (p_n + i )^l, [\xi_1(p_n )], \dots , [\xi_k(p_n )]  \right), \quad {n =1, 2, \dots} .$$
For each $r \in \mathbb{N}$, let $\bold{D}_i^{(r)} = \bold{D}_i \cap \bigoplus\limits_{j=1}^{m} r \mathbb{Z}$ 
and enumerate the elements of $\bold{D}_i^{(r)}$ by $(\bold{d}_{n,i}^{(r) })$ such that  $|\bold{d}_{n,i}^{(r)}|$ is non-decreasing.\footnote{Note that, for any $r$ in $\mathbb{N}$, the sequence $|\bold{d}_{n,i}^{(r)}|$ is eventually increasing.} 
Let $(T^{\bold{d}})_{\bold{d} \in \mathbb{Z}^{m}}$ be a measure preserving $\mathbb{Z}^{m}$-action on a probability space $(X, \mathcal{B}, \mu)$.
Then
\begin{enumerate}[(i)]

\item $\bold{D}_i$ is an ``averaging" set of recurrence:
  \begin{equation}
  \label{eqn3.3.9}
  \lim_{N \rightarrow \infty} \frac{1}{N} \sum_{n=1}^N \mu(A \cap T^{-\bold{d}_{n,i}} A) > 0.
  \end{equation} 
\item For any $A \in \mathcal{B}$ with $\mu(A) > 0$ and for any $\epsilon > 0$, there exists $r \in \mathbb{N}$ such that
  \begin{equation}
  \label{eqn3.3.7}
  \lim_{N \rightarrow \infty} \frac{1}{N} \sum_{n=1}^N \mu(A \cap T^{-\bold{d}_{n,i}^{(r)}} A) \geq  \mu(A)^2 - \epsilon.
  \end{equation} 
Moreover, $ \{ \bold{d} \in \bold{D}_i : \mu(A \cap T^{-\bold{d}}A ) \geq \mu^2(A) - \epsilon \}$ has positive lower relative density in $\bold{D}_i$. 
Hence, $\bold{D}_i$ is a set of nice recurrence.
\end{enumerate}
\end{Theorem}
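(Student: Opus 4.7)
The plan is to combine the Bochner-Herglotz framework of Theorem~\ref{A} with a Fourier expansion of the indicator function of $\bold{D}_i^{(r)}$ (via Lemma~\ref{Mo}), controlling the resulting character sums through Proposition~\ref{ud lemma} and Lemma~\ref{subsequence}.

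\textbf{Setup.} By Theorem~\ref{BH}, there is a positive finite measure $\nu$ on $\mathbb{T}^m$ with $\mu(A \cap T^{-\bold{n}}A) = \hat{\nu}(\bold{n})$, and the mean ergodic theorem yields $\nu\{\bold{0}\} \geq \mu(A)^2$. Let $L^t : \mathbb{T}^m \to \mathbb{T}^{l+k}$ denote the transpose of $L$, defined by $\bold{k}\cdot L^t(\bold{x}) = L(\bold{k})\cdot \bold{x}$, and let $\nu'$ be the pushforward of $\nu$ under $L^t$. Writing $\vec{v}_i(p) := \bigl(p+i,\ldots,(p+i)^l,[\xi_1(p)],\ldots,[\xi_k(p)]\bigr)$, one has
\begin{equation*}
\mu(A \cap T^{-\bold{d}_{n,i}}A) = \int_{\mathbb{T}^{l+k}} e\bigl(\vec{v}_i(p_n)\cdot \gamma'\bigr)\,d\nu'(\gamma'), \qquad \nu'\{\bold{0}\} \geq \mu(A)^2.
\end{equation*}

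\textbf{Part (ii).} Given $\epsilon > 0$, pick $R$ large enough that the total $\nu'$-mass of rational atoms of denominator exceeding $R$ is below $\epsilon$, and set $r := R!$. Swapping sum and integral (valid by the bound $|\frac{1}{N}\sum e(\cdots)|\leq 1$ and dominated convergence) reduces (ii) to a pointwise analysis of
$$\ell_r(\gamma') := \lim_{N\to\infty} \frac{1}{N}\sum_{n=1}^N e\bigl(\vec{v}_i(p_n^{(r)})\cdot\gamma'\bigr),$$
where $p_n^{(r)}$ enumerates the primes $p$ with $L(\vec{v}_i(p)) \in r\mathbb{Z}^m$. The three cases are: (a) if $\gamma'$ has at least one irrational coordinate, $\ell_r(\gamma')=0$; (b) if $\gamma'$ is a rational atom of $\nu'$ of denominator $\leq R$, $\ell_r(\gamma')=1$; (c) otherwise the trivial bound $|\ell_r(\gamma')|\leq 1$ applies. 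For (a), expand the indicator $\mathbb{1}\{L(\vec{v}_i(p))\in r\mathbb{Z}^m\}$ via Lemma~\ref{Mo} as $\frac{1}{r^m}\sum_{\bold{j}} e(\vec{v}_i(p)\cdot L^t(\bold{j}/r))$; each shifted argument $\gamma'+L^t(\bold{j}/r)$ retains an irrational coordinate, so Proposition~\ref{ud lemma}(ii) (combined with Lemma~\ref{subsequence} for the relevant progressions of primes) gives termwise vanishing. For (b), any rational atom of $\nu'$ of denominator $q\leq R$ lies in $L^t(\mathbb{T}^m)$; by Smith normal form it admits a preimage in $\frac{1}{q}\mathbb{Z}^m/\mathbb{Z}^m \subset \frac{1}{r}\mathbb{Z}^m/\mathbb{Z}^m$, whence $\vec{v}_i(p_n^{(r)})\cdot\gamma' = L(\vec{v}_i(p_n^{(r)}))\cdot\gamma \in \mathbb{Z}$ for every $n$ and each term of the average equals $1$. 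The continuous part of $\nu'$ is concentrated on the set where at least one coordinate is irrational (the rationals being countable and thus of continuous measure zero), hence integrates against $\ell_r$ to $0$. Assembling the three cases,
\begin{equation*}
\lim_N \frac{1}{N}\sum_{n=1}^N \mu(A \cap T^{-\bold{d}_{n,i}^{(r)}}A) \;\geq\; \nu'\{\bold{0}\} - \epsilon \;\geq\; \mu(A)^2 - \epsilon.
\end{equation*}
The ``moreover'' clause follows by a pigeonhole argument: since each summand lies in $[0,\mu(A)]$ and the limiting average is at least $\mu(A)^2-\epsilon/2$ for an appropriate $r$, the lower density in $\bold{D}_i^{(r)}$ of the set $\{\bold{d} : \mu(A\cap T^{-\bold{d}}A) \geq \mu(A)^2-\epsilon\}$ is positive; transferring to $\bold{D}_i$ via the positive relative density of $\bold{D}_i^{(r)}$ in $\bold{D}_i$ (from the proof of Theorem~\ref{sarkozy type}) yields the density claim and, in particular, nice recurrence.

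\textbf{Part (i) and the main obstacle.} Part (i) is immediate from (ii): take $\epsilon = \mu(A)^2/2$ to produce $r$ with restricted average at least $\mu(A)^2/2>0$; nonnegativity of the summands and the positive relative density $\delta_r>0$ of $\bold{D}_i^{(r)}$ in $\bold{D}_i$ then give $\lim_N \frac{1}{N}\sum_{n=1}^N \mu(A \cap T^{-\bold{d}_{n,i}}A) \geq \delta_r \cdot \frac{1}{2}\mu(A)^2 > 0$. The principal technical obstacle is step (a): Proposition~\ref{ud lemma} is formulated for averages over \emph{all} primes, and to justify its use along the restricted sequence $(p_n^{(r)})$ one must first perform the Lemma~\ref{Mo} character expansion and then apply the proposition to each shifted argument, appealing to Lemma~\ref{subsequence} to secure the needed equidistribution of $([\xi_j(p)])$ along the resulting arithmetic progressions. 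A secondary subtlety is the Smith-normal-form lift used in step (b), which is available because $L^t$ is an integer linear map of tori and any rational point in its image of denominator $q$ admits a $q$-torsion preimage.
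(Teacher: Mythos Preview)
Your overall strategy---the spectral-measure viewpoint via Bochner--Herglotz, the character expansion of the indicator of $\bold{D}_i^{(r)}$ through Lemma~\ref{Mo}, and the derivation of (i) from (ii)---is correct and is the Fourier-side translation of what the paper does. The paper argues on the Hilbert-space side, splitting $L^2 = \mathcal{H}_{rat}\oplus\mathcal{H}_{tot}$ for the $\mathbb{Z}^m$-action, approximating the rational part by an eigenfunction $f_{\bold{a}}$ with $T_j^{a_j}f_{\bold a}=f_{\bold a}$ for all $j$, and then choosing $r$ divisible by every $a_j$ so that $T^{\bold d}f_{\bold a}=f_{\bold a}$ for $\bold d\in r\mathbb{Z}^m$. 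Your steps (a) and (c) are fine: the Lemma~\ref{Mo} expansion reduces the restricted average to full prime averages with shifted (still irrational) arguments, and Proposition~\ref{ud lemma}(ii) handles those directly; Lemma~\ref{subsequence} is not actually needed here.

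There is, however, a genuine slip in step (b). The Smith-normal-form claim that a $q$-torsion point in the image of $L^t:\mathbb{T}^m\to\mathbb{T}^{l+k}$ admits a $q$-torsion preimage is false in general. Take $L^t$ with an elementary divisor $d_j=5$ and $\gamma'=\tfrac15$ in the corresponding coordinate: the preimages solve $5x\equiv\tfrac15\pmod 1$, i.e.\ $x\in\{\tfrac{1}{25},\tfrac{6}{25},\ldots\}$, all of exact order $25$. With $R=5$ your choice $r=R!=120$ is not divisible by $25$, so $\gamma$ need not lie in $\tfrac1r\mathbb{Z}^m/\mathbb{Z}^m$ and the inference $L(\vec v_i(p_n^{(r)}))\cdot\gamma\in\mathbb{Z}$ breaks down.

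The fix is easy and does not disturb the rest of your argument. There are only finitely many rational points in $\mathbb{T}^{l+k}$ of denominator $\leq R$; for each such atom $\gamma'$ of $\nu'$ choose any rational preimage $\gamma\in\mathbb{Q}^m/\mathbb{Z}^m$ (one exists because $L^t$ is an integer matrix and the linear system $L^t\gamma=\gamma'$ has rational solutions once it has real ones), and let $r$ be a common multiple of the denominators of all these preimages together with $R!$. Alternatively, observe that the paper's Hilbert-space route avoids the issue entirely: working directly with the $\mathbb{Z}^m$-action, one chooses $r$ from the periods $\bold a\in\mathbb{Z}^m$ of an approximating eigenfunction, and $\bold d\in\bold D_i^{(r)}\subset r\mathbb{Z}^m$ immediately gives $T^{\bold d}f_{\bold a}=f_{\bold a}$ without any torsion lifting through $L$.
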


\begin{proof}
Let us prove this for the case that $L$ is an identity map on $\mathbb{Z}^{l+k}$ and $i=1$ first. To simplify the notation we use $\bold{d}_n$ and $\bold{d}_{n}^{(r)}$ instead of $\bold{d}_{n,i}$ and $\bold{d}_{n,i}^{(r)}$.

For any $\mathbb{Z}^{l+k}$-action $T$, there are commuting invertible measure preserving transformations $T_1, \dots , T_{l+k}$ such that $T^{\bold{m}} = T_1^{m_1} \cdots T_{l+k}^{m_{l+k}}$ for $\bold{m} = (m_1, m_2 , \dots , m_{l+k})$.

First we will show that the limits in \eqref{eqn3.3.9} and \eqref{eqn3.3.7} exist. 
By Theorem \ref{BH}, there exists a measure $\nu$ on $\mathbb{T}^{k+l}$ such that
$$\mu(A \cap T^{-\bold{n}}A) = \int 1_A(x) \,\, T^{\bold{n}} 1_A(x) \, d \mu(x) = \int_{\mathbb{T}^{l+k}} e(\bold{n} \cdot \bold{\gamma}) \, d \nu (\bold{\gamma}).$$
Thus, it is sufficient to show that for every $\gamma$, 
\begin{equation}
\label{eq4.6}
\lim\limits_{N \rightarrow \infty}
\frac{1}{N} \sum\limits_{n=1}^N e(\bold{d}_n \cdot \bold{\gamma}) \,\,\,\, \textrm{and} \,\,\,\,
 \lim\limits_{N \rightarrow \infty} \frac{1}{N}
\sum\limits_{n=1}^N e(\bold{d}_n^{(r)} \cdot \bold{\gamma})
\end{equation} exist. 
By \eqref{mo}, denoting $A_N =\{ n \leq N: \bold{d}_n \in D_1^{(r)} \}$,
\begin{align*}
&\lim_{N \rightarrow \infty} \frac{1}{N} \sum_{n=1}^N e (\bold{d}_n^{(r)} \cdot \bold{\gamma})\\
&= \lim_{N \rightarrow \infty} \frac{1}{|A_N|} \sum_{n=1}^N e\left(\bold{d}_n \cdot \bold{\gamma}\right) \,\, \left( \prod_{i=1}^l  \frac{1}{r} \sum_{j_i=1}^r e\left(\frac{  (p_n + 1)^i j_i}{r}\right) \right) \left( \prod_{i=1}^k \frac{1}{r} \sum_{j_{l+i}=1}^r e\left(\frac{[\xi_i (p_n)] j_{l+i}}{r}\right) \right) \\
&= \lim_{N \rightarrow \infty} \frac{N}{|A_N|} \frac{1}{r^{k+l}} \sum_{j_1 = 1}^r \cdots \sum_{j_{l+k} = 1}^{r} \frac{1}{N}
\sum_{n=1}^N e \left(\bold{d}_n \cdot (\bold{\gamma} + \left(\frac{j_1}{r} +     \cdots + \frac{j_{l+k}}{r} \right) \right).
\end{align*}
Using the same argument as in the proof of Theorem \ref{sarkozy type}, 
we can show that relative density of $D_1^{(r)}$ in $D_1$ is positive, 
so we only need to show that $\lim\limits_{N \rightarrow \infty} \frac{1}{N} \sum\limits_{n=1}^N e(\bold{d}_n \cdot \bold{\gamma})$ exists for every $\gamma$.

From Proposition \ref{ud lemma}, if $\bold{\gamma} \notin \mathbb{Q}^{l+k}$, 
then $\lim\limits_{N \rightarrow \infty} \frac{1}{N}\sum\limits_{n=1}^N e(\bold{d}_n \cdot \bold{\gamma}) = 0.$
If $\bold{\gamma} = (\gamma_1, \gamma_2, \dots, \gamma_{l+k}) \in \mathbb{Q}^{l+k}$, 
then we can find a common denominator $q \in \mathbb{N}$ for $\gamma_1, \dots,\gamma_{l+k}$ such that $\gamma_i = \frac{a_i}{q}$ for each $i$. 
We claim that the following limit exists:
$$\lim\limits_{N \rightarrow \infty}\dfrac{1}{\pi(N)} \sum\limits_{\substack { p  \equiv t \, \, \bmod q \\  p \leq N}} e\left(\sum_{j=1}^k  [\xi_j(p)] \frac{a_{l+j}}{q}\right).$$
This follows from two observations:
\begin{enumerate}[(a)]
\item $\{ p \in \mathcal{P} : p \equiv t \,\, \bmod q \}$ has a density $\frac{1}{\phi(q)}$ in $\mathcal{P}$ for $(t,q) = 1$.
\item $([\xi_1(p)] , \dots , [\xi_k(p)] )$ is u.d.\ in $\mathbb{Z}_q^k$ along $p \in t + q \mathbb{Z}$ for $(t,q) = 1$ 
since $\left( \frac{\xi_1(p)}{q}, \dots , \frac{\xi_k(p)}{q} \right)$ is u.d.\ $\bmod \, 1$ in $\mathbb{T}^k$ along $p \in t + q \mathbb{Z}$ from Lemma \ref{subsequence}.
\end{enumerate}
Thus, we have
\begin{align*} 
&\lim\limits_{N \rightarrow \infty} \frac{1}{N} \sum_{n=1}^N e(\bold{d}_n \cdot\bold{\gamma})
= \lim_{N \rightarrow \infty} \frac{1}{\pi(N)} \sum\limits_{p \leq N} e\left(\sum_{i=1}^l P_i (p+1) \frac{a_i}{q} + \sum_{j=1}^k [\xi_j(p)] \frac{a_{l+j}}{q}\right) \\
&\quad=\lim\limits_{N \rightarrow \infty}  \frac{1}{\pi(N)} \sum\limits_{\substack{ (t, q) =1 \\ 0 \leq t \leq q-1}} \sum\limits_{\substack{ p \equiv t \,\, \bmod q \\  p \leq N}} e\left(\sum_{i=1}^l P_i (p+1) \frac{a_i}{q} + \sum_{j=1}^k [\xi_j(p)] \frac{a_{l+j}}{q}\right) \\
&\quad= \sum\limits_{\substack{ (t, q) =1 \\ 0 \leq t \leq q-1}} e\left(\sum_{i=1}^l P_i (t+1) \frac{a_i}{q}\right) \lim\limits_{N \rightarrow \infty} \frac{1}{\pi(N)} \sum\limits_{\substack{ p \equiv t \,\, \bmod q \\  p \leq N}} e\left(\sum_{j=1}^k [\xi_j(p)] \frac{a_{l+j}}{q}\right). 
\end{align*}

Now let us show (ii). 
Consider the following Hilbert space splitting for $L^2(X) = \mathcal{H} = \mathcal{H}_{rat} \oplus \mathcal{H}_{tot}$ for $T_1, \dots, T_{l+k}$, where
$$\mathcal{H}_{rat} = \overline{ \{ f \in \mathcal{H} :  \textrm{there exists non-zero} \,\, (m_1, m_2, \dots, m_{l+k}) \in \mathbb{Z}^{l+k}, \, T_i^{m_i} f = f \,\, \textrm{for all} \,\, i  \} }, $$
\begin{eqnarray*}
\mathcal{H}_{tot} = \{ f \in \mathcal{H} &:&  \,\, \textrm{for any non-zero} \,\, (m_1, m_2,  \dots, m_{l+k}) \in \mathbb{Z}^{l+k} \\
&&\lim_{N_1, \cdots, N_k \rightarrow \infty} \left|\left|\frac{1}{N_1 \cdots N_{l+k}} \sum_{n_1=0}^{N_1-1} \cdots  \sum_{n_k=0}^{N_k-1} T_1^{m_1 n_1} \cdots T_{l+k}^{m_{l+k} n_{l+k}} f \right|\right|_{\mathcal{H}} = 0 \}.
\end{eqnarray*}
Let $1_A = f + g$, where $f \in \mathcal{H}_{rat} $ and $g \in \mathcal{H}_{tot}$.
Note that $\mathcal{H}_{rat} = \overline{\bigcup_{q=1}^{\infty} \mathcal{H}_q}$, where $\mathcal{H}_q = \{  f : T_i^{q!} f = f \,\, \textrm{for} \,\, i=1,2, \dots, l+k \}$.

For any $\epsilon >0$, there exists $\bold{a} = (a_1, \cdots , a_{l+k}) \in \mathbb{Z}^{l+k} $ and $f_{\bold{a}} \in \mathcal{H}_{rat} $ 
such that $T^{\bold{a}} f_{\bold{a}} = f_{\bold{a}}$, $|| f_{\bold{a}} - f || < \epsilon/2 $ and $\int f_{\bold{a}} \, d \mu = \mu(A)$.
Choose $r$ such that $a_i | r $ for all $i$. 
Consider 
$$ \frac{1}{N} \sum_{n=1}^N \mu(A \cap T^{-\bold{d}_n^{(r)}}A) =   
\frac{1}{N} \sum_{n=1}^N  \int f(x) \, T^{\bold{d}_n^{(r)}} f(x) \, d \mu(x)   +   \frac{1}{N} \sum_{n=1}^N  \int g(x) \,T^{\bold{d}_n^{(r)}} g(x) \, d \mu(x). $$
For $f \in \mathcal{H}_{rat}$,
\begin{eqnarray*}
\int f(x) \, T^{\bold{d}_n^{(r)}} f(x) \, d \mu(x) &=& \langle f_{\bold{a}}, f_{\bold{a}} \rangle + \langle f_{\bold{a}}, T^{\bold{d}_n^{(r)}} (f- f_{\bold{a}}) \rangle + \langle f-f_{\bold{a}}, T^{\bold{d}_n^{(r)}}f \rangle \\
& \geq& \mu^2(A) - \epsilon, 
\end{eqnarray*}
Also note that $(\bold{d}_n^{(r)}\cdot \bold{\gamma)}$ is u.d.\ $\bmod \, 1$ for $\bold{\gamma} \notin (\mathbb{Q}/\mathbb{Z})^{l+k}$. 
Hence,
$$\frac{1}{N} \sum_{n=1}^N  \int g(x) \,T^{\bold{d}_n^{(r)}} g(x) \, d \mu(x) = \int \frac{1}{N} \sum_{n=1}^N e(\bold{d}_n^{(r)}\cdot \bold{\gamma} ) \, d \nu(\bold{\gamma}) \rightarrow 0,$$
since $\nu(\mathbb{Q}/\mathbb{Z})^{l+k} = 0$ because $g \in \mathcal{H}_{tot}$.
Then, $$\frac{1}{N} \sum_{n=1}^N \mu(A \cap T^{-\bold{d}_n^{(r)}}A) \geq \mu(A)^2 - \epsilon,$$
so $ \{ \bold{d} \in D_1 : \mu(A \cap T^{-\bold{d}}A ) \geq \mu^2(A) - \epsilon \}$ has positive lower relative density in $D_1$.
 
Now it remains to show (i). Choose $\epsilon$ so small that $\mu^2(A) - \epsilon \geq \mu^2(A)/2$. 
Since $(\bold{d}_n^{(r)})$ has positive relative density, say $\delta$, we have
$$\lim_{N \rightarrow \infty} \frac{1}{N}\sum_{n=1}^N \mu(A \cap T^{-\bold{d}_n}A) 
\geq \delta \lim_{N \rightarrow \infty} \frac{1}{N}\sum_{n=1}^N \mu(A \cap T^{-\bold{d}_n^{(r)}}A) 
\geq \frac{\delta}{2} \mu^2(A). $$

The proof for $i = -1$ is completely analogous. 
By the argument in the proof of Theorem \ref{A} we can prove the theorem for any $\bold{D}_i= L(D_i)$.
\end{proof}

\begin{Corollary}
\label{semi ergodic cor}
If $E \subset \mathbb{Z}^{m}$ with ${d^*}(E) > 0$, then for any $\epsilon > 0$,
\begin{equation*}
\{ \bold{d} \in \bold{D}_i : d^*(E \cap E - \bold{d} ) \geq d^*(E)^2 - \epsilon \} 
 \end{equation*}
has positive lower relative density in $\bold{D}_i$ for $i = \pm 1$.
Furthermore,
$$\liminf_{N \rightarrow \infty} \frac{\left| \{ p \leq N : L \left(  p - 1, \dots , (p - 1)^l, [\xi_1(p)], \dots , [\xi_k(p)]  \right) \in E - E \} \right| }{\pi(N)} > 0.$$
$$\liminf_{N \rightarrow \infty} \frac{\left| \{ p \leq N: L \left(  p + 1, \dots , (p + 1)^l, [\xi_1(p)], \dots , [\xi_k(p)]  \right) \in E - E \} \right| }{\pi(N)} > 0.$$ 
\end{Corollary}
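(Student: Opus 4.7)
The plan is to deduce Corollary \ref{semi ergodic cor} from Theorem \ref{recurrence} by applying the Furstenberg correspondence principle (Proposition \ref{correspondence}). Given $E \subset \mathbb{Z}^m$ with $d^*(E) > 0$, the correspondence principle furnishes a probability space $(X, \mathcal{B}, \mu)$, a measure preserving $\mathbb{Z}^m$-action $T = (T^{\bold{n}})_{\bold{n} \in \mathbb{Z}^m}$, and a set $A \in \mathcal{B}$ with $\mu(A) = d^*(E)$ satisfying
\begin{equation*}
d^*(E \cap (E - \bold{d})) \;\geq\; \mu(A \cap T^{-\bold{d}} A) \qquad \text{for every } \bold{d} \in \mathbb{Z}^m.
\end{equation*}

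Next, apply Theorem \ref{recurrence}(ii) to this action and this set $A$: for every $\epsilon > 0$ the set
\begin{equation*}
R(\epsilon) \;:=\; \{ \bold{d} \in \bold{D}_i : \mu(A \cap T^{-\bold{d}} A) \geq \mu(A)^2 - \epsilon \}
\end{equation*}
has positive lower relative density in $\bold{D}_i$. The displayed inequality above then yields
\begin{equation*}
R(\epsilon) \;\subset\; \{ \bold{d} \in \bold{D}_i : d^*(E \cap (E - \bold{d})) \geq d^*(E)^2 - \epsilon \},
\end{equation*}
proving the first assertion of the corollary.

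For the two liminf statements it suffices to consider the case where $L$ is the identity on $\mathbb{Z}^{l+k}$ (equivalently $m = l+k$), since the right-hand side of each liminf only concerns the "identity" enumeration by primes. Using the enumeration $\bold{d}_{n,i} = (p_n + i, \dots, (p_n+i)^l, [\xi_1(p_n)], \dots, [\xi_k(p_n)])$ introduced in Theorem \ref{recurrence}, the first coordinate $p_n + i$ is strictly monotone in $n$ for $n$ large; hence the map $p_n \mapsto \bold{d}_{n,i}$ is eventually injective and $|\bold{d}_{n,i}|$ grows polynomially in $p_n$. Consequently the lower relative density of $R(\epsilon)$ in $\bold{D}_i$ (defined through the cubes $[-N,N]^m$) agrees with the lower density, along primes ordered by size, of $\{ p \in \mathcal{P} : \bold{d}_{p,i} \in R(\epsilon) \}$ with respect to $\pi(N)$. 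Combined with the inclusion above, this gives the two liminf inequalities.

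The argument is essentially a bookkeeping transfer, and the only mild technicality I would expect to write out carefully is the equivalence between lower relative density of $R(\epsilon)$ inside $\bold{D}_i$ (as subsets of $\mathbb{Z}^m$ intersected with cubes) and lower density over primes as counted by $\pi(N)$; once the eventual injectivity and polynomial growth of $p_n \mapsto \bold{d}_{n,i}$ are invoked, this equivalence is routine and does not present a substantive obstacle. All nontrivial content has already been absorbed into Theorem \ref{recurrence} and Proposition \ref{correspondence}.
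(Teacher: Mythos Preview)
Your proposal is correct and matches the intended derivation: the paper gives no explicit proof of this corollary, since it follows immediately from Theorem \ref{recurrence} via the Furstenberg correspondence principle (Proposition \ref{correspondence}), exactly as you outline.

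One small remark on the liminf part. Rather than transferring the lower relative density statement in $\bold{D}_i$ back to a density over primes (which, as you note, requires the bookkeeping about eventual injectivity of $n \mapsto \bold{d}_{n,i}$ and comparing cubes in $\mathbb{Z}^m$ with counts by $\pi(N)$), it is slightly more direct to use Theorem \ref{recurrence}(i). That result already gives
\[
\lim_{N \to \infty} \frac{1}{N} \sum_{n=1}^N \mu(A \cap T^{-\bold{d}_{n,i}} A) = c > 0,
\]
with the average indexed by the prime enumeration. Then for $\epsilon = c/2$ the set $\{ n \le N : \mu(A \cap T^{-\bold{d}_{n,i}} A) > \epsilon \}$ has lower density at least $c/2$ in $\mathbb{N}$, and via the correspondence principle each such $n$ gives $\bold{d}_{n,i} \in E - E$. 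This yields the two liminf inequalities without any discussion of relative density in $\bold{D}_i$ and works verbatim for general $L$. Your route through (ii) is also fine; the end result is the same.
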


\begin{Remark}
\label{end}
Let $P_1(x), \dots, P_l(x) \in \mathbb{Z}[x]$ with $P_i(0)=0$ for all $1 \leq i \leq l$ and let  $\xi_1 (x), \dots, \xi_k(x) \in {\bf H}$  such that $\sum_{i=1}^k b_i \xi_i(x) \in {\bf H}$ for any $(b_1, \dots, b_k) \in \mathbb{Z}^k \backslash \{(0, 0, \dots, 0)\}$.
By choosing a linear map $L$ appropriately, the sets $D_{-1}$ and $D_1$ in Theorem \ref{recurrence} and Corollary \ref{semi ergodic cor} become
\begin{equation*}
\label{disply:polynomial}
\begin{split}
  \{ \left( P_1( p-1), P_2 (p-1), \dots , P_l(p-1), [\xi_1(p)], \dots , [\xi_k (p)]  \right) | \, p \in\mathcal{P} \}, \\
 \{ \left(  P_1(p+1), P_2(p+1), \dots , P_l(p+1), [\xi_1(p)], \dots , [\xi_k (p)]  \right) | \, p \in\mathcal{P} \}
\end{split}
\end{equation*}
respectively.
 \end{Remark}

\renewcommand{\abstractname}{Acknowledgments}
\begin{abstract}
The authors would like to thank Donald Robertson for careful reading of the manuscript and many useful comments.  
\end{abstract}

\end{document}